\newcommand{\ee}{\varepsilon}
\newcommand{\N}{{\mathbb N}}
\newcommand{\Q}{{\mathbb Q}}
\newcommand{\R}{{\mathbb R}}
\newcommand{\cH}{{\mathcal H}}
\newcommand{\Sn}{S^{n-1}}
\newtheorem{thm}{Theorem}[section]
\newtheorem{lem}[thm]{Lemma}
\newtheorem{cor}[thm]{Corollary}
\newtheorem{prop}[thm]{Proposition}
\newtheorem{ex}[thm]{Example}
\newtheorem{prob}[thm]{Problem}
\newcommand{\cl}{{\mathrm{cl}}\,}
\newcommand{\conv}{{\mathrm{conv}}\,}
\newcommand{\Li}{{\mathrm{Li}}\,}
\newcommand{\Ls}{{\mathrm{Ls}}\,}
\newcommand{\aff}{{\mathrm{aff}}\,}
\newcommand{\lin}{{\mathrm{lin}}\,}
\newcommand{\di}{\diamondsuit}
\begin{document}
\hfill\today
\bigskip

\title{Convergence of symmetrization processes}
\author[Gabriele Bianchi, Richard J. Gardner, and Paolo Gronchi]
{Gabriele Bianchi, Richard J. Gardner, and Paolo Gronchi}
\address{Dipartimento di Matematica e Informatica ``U. Dini", Universit\`a di Firenze, Viale Morgagni 67/A, Firenze, Italy I-50134} \email{gabriele.bianchi@unifi.it}
\address{Department of Mathematics, Western Washington University,
Bellingham, WA 98225-9063} \email{richard.gardner@wwu.edu}
\address{Dipartimento di Matematica e Informatica ``U. Dini", Universit\`a di Firenze, Piazza Ghiberti 27, Firenze, Italy I-50122} \email{paolo.gronchi@unifi.it}
\thanks{First and third author supported in part by the Gruppo
Nazionale per l'Analisi Matematica, la Probabilit\`a e le loro
Applicazioni (GNAMPA) of the Istituto Nazionale di Alta Matematica (INdAM).  Second author supported in
part by U.S.~National Science Foundation Grant DMS-1402929.}
\subjclass[2010]{Primary: 52A20, 52A39; secondary: 28B20, 52A38, 52A40} \keywords{convex body, compact set, Steiner symmetrization, Schwarz symmetrization, Minkowski symmetrization, fiber symmetrization, rotational symmetry, reflection, universal sequence}

\maketitle

\begin{abstract}
Steiner and Schwarz symmetrizations, and their most important relatives, the Minkowski, Minkowski-Blaschke, fiber, inner rotational, and outer rotational symmetrizations, are investigated.  The focus is on the convergence of successive symmetrals with respect to a sequence of $i$-dimensional subspaces of $\R^n$.  Such a sequence is called universal for a family of sets if the successive symmetrals of any set in the family converge to a ball with center at the origin.  New universal sequences for the main symmetrizations, for all valid dimensions $i$ of the subspaces, are found, by combining two groups of results.  The first, published separately, provides finite sets ${\mathcal{F}}$ of subspaces such that reflection symmetry (or rotational symmetry) with respect to each subspace in ${\mathcal{F}}$ implies full rotational symmetry.  In the second, proved here, a theorem of Klain for Steiner symmetrization is extended to Schwarz, Minkowski, Minkowski-Blaschke, and fiber symmetrizations, showing that if a sequence of subspaces is drawn from a finite set ${\mathcal{F}}$ of subspaces, the successive symmetrals of any compact convex set converge to a compact convex set that is symmetric with respect to any subspace in ${\mathcal{F}}$ appearing infinitely often in the sequence.  It is also proved that for Steiner, Schwarz, and Minkowski symmetrizations, a sequence of $i$-dimensional subspaces is universal for the class of compact sets if and only if it is universal for the class of compact convex sets, and Klain's theorem is shown to hold for Schwarz symmetrization of compact sets.
\end{abstract}

\section{Introduction}

Around 1836, Jakob Steiner, in attempting to prove the isoperimetric inequality for convex bodies in $\R^n$, introduced the process that became known as Steiner symmetrization.  (Definitions of Steiner and other important symmetrizations, and some of their basic properties, can be found in Section~\ref{symm} below.)  Its most useful feature is that there are sequences of directions such that the corresponding successive Steiner symmetrals of a convex body always converge to a ball of the same volume.  Nowadays, this is employed in standard proofs of not only the isoperimetric inequality, but other potent geometric inequalities besides: the Blaschke-Santal\'o, Brunn-Minkowski, Busemann random simplex, and Petty projection inequalities, to name but a few.  Certain of these inequalities, such as the isoperimetric and Brunn-Minkowski inequalities, hold for more general sets, such as sets of finite perimeter. Many are affine invariant and therefore, at least in the convex geometry context, even more powerful; the Petty projection inequality, for example, is far stronger than the isoperimetric inequality. Several have more general, and sometimes even stronger, $L_p$ and Orlicz versions. For a sample, the reader can consult \cite{AB, FMP, HS, LYZ1, LYZ2, LZ, PP}.  Moreover, the impact of these inequalities extends far beyond geometry, since they often lead quickly to analytic versions; for example, the classical Sobolev inequality is equivalent to the isoperimetric inequality, and the Brunn-Minkowski and Petty projection inequalities yield, respectively, the Pr\'ekopa-Leindler inequality and Zhang's remarkable affine Sobolev inequality \cite{Z}.  This transition is explained at length in \cite{Gar02} and provides a portal to very wide applications to other areas such as probability and multivariate statistics, PDEs, and mathematical economics and finance.  For further information and references, see \cite[Chapter~9]{Gar06}, \cite[Chapter~9]{Gru07}, and \cite[Chapter~10]{Sch93}.

Nor is Steiner symmetrization the only symmetrization process by which valuable geometric and analytic inequalities can be established.  Minkowski and Schwarz symmetrization can be used to prove Urysohn-type inequalities and others; see, for example, \cite{FGM, GWW, PP}.  These and other symmetrization processes still are the main weapons of attack in the classic text of P\'{o}lya and Szeg\H{o} \cite{PS}, which stimulated further applications to PDEs, potential theory, and mathematical physics.  Here one finds inequalities for other set functions, such as the Poincar\'e-Faber-Szeg\H{o} inequality for capacity, the Faber-Krahn inequality for the first eigenvalue of the Laplacian, and P\'{o}lya's inequality for torsional rigidity.  The literature is vast and we can only point to \cite{B80, F00, Hay, Hen06, Kaw, K06, KP, LL, Str}, and the references given in these texts.

Our previous paper \cite{BGG} initiated a systematic study of symmetrizations in geometry, the basic notion being that of an {\em $i$-symmetrization}, a map $\di_H:\mathcal{E}\to \mathcal{E}_H$, where $H$ is a fixed $i$-dimensional subspace $H$ in $\R^n$, $i\in \{0,\dots,n-1\}$, $\mathcal{E}$ is a class of nonempty compact sets in $\R^n$, and $\mathcal{E}_H$ is the class of members of $\mathcal{E}$ that are $H$-symmetric (i.e., symmetric with respect to $H$).  Prototypical examples include Steiner symmetrization ($i=n-1$), Schwarz symmetrization ($i\in \{1,\dots,n-2\}$), and Minkowski symmetrization ($i\in \{0,\dots,n-1\}$). Both \cite{BGG} and the present paper focus on symmetrization of sets, but an analogous axiomatic framework for rearrangements of functions has been formulated by Brock and Solynin \cite{BS} and van Shaftingen \cite{VSPhD} (see also \cite{VSW}).  A different approach to the latter was proposed by the authors and Kiderlen \cite{BGGK}; see \cite[Appendix]{BGGK} for a comparison.

Most of the results in \cite{BGG} concern either classifications of Steiner and Minkowski symmetrizations in terms of their properties, or containment relations between symmetrals, akin to Lemma~\ref{1} below.  The focus of \cite[Section~8]{BGG}, however, is on the convergence of successive symmetrals, and the present paper grew from the seed planted there.  To discuss such results, we allow the subspace $H$ above to vary and call a collection $\di=\{\di_H : H\in{\mathcal{G}}(n,i)\}$ of maps an {\em $i$-symmetrization process}, where ${\mathcal{G}}(n,i)$ denotes the Grassmannian of $i$-dimensional subspaces in $\R^n$ (see Sections~\ref{subsec:notations} and~\ref{symm} for notation and terminology).  We borrow key notions from Coupier and Davydov \cite{CouD14}:  A sequence $(H_m)$ of $i$-dimensional subspaces is called {\em weakly $\di$-universal} if for any $k\in \N$, successive $\di$-symmetrals, with respect to $H_k, H_{k+1},\dots$, of any $E\in {\mathcal E}$ always converge to an origin-symmetric ball (depending on $E$ and $k$), and {\em $\di$-universal} if the ball is independent of $k$.  See Section~\ref{symm} for precise definitions. From \cite[Theorem~3.1]{CouD14} and earlier results, we know that when $i=n-1$ and ${\mathcal E}={\mathcal K}^n_n$, the class of convex bodies in $\R^n$, the four concepts (weakly) Steiner-universal and (weakly) Minkowski-universal are all equivalent.  In \cite[Theorem~8.1]{BGG}, it was shown that if $i\in \{1,\dots,n-1\}$ and $\di$ is an $i$-symmetrization process on ${\mathcal K}^n_n$ such that $I_H K\subset \di_HK\subset M_HK$ for $K\in {\mathcal K}^n_n$ and $H\in{\mathcal{G}}(n,i)$, where $I_HK$ and $M_HK$ are the inner rotational symmetral and Minkowski symmetral of $K$, respectively, then any Minkowski-universal sequence in ${\mathcal{G}}(n,i)$ is also weakly $\di$-universal.  This allowed the same conclusion to be drawn, in \cite[Corollary~8.2]{BGG}, for any $i$-symmetrization process satisfying Properties 1, 4, and 7 in Section~\ref{symm}.

Steiner-universal sequences are far from rare. In \cite[Proposition~3.3]{CouD14}, a result from \cite{BKLYZ} is used to show that they can be drawn from any set of $(n-1)$-dimensional subspaces whose unit normal vectors are dense in $S^{n-1}$.  Steiner-universal sequences affording a particularly fast rate of convergence are provided by Klartag \cite{Kla04}; the construction depends on that in \cite{Kla02} and involves concatenating finite sequences of symmetrizations, some with respect to a deterministic choice of directions and some with respect to a random choice.  Random Steiner symmetrizations that produce Steiner-universal sequences almost surely have been described by Burchard and Fortier \cite[Corollary~2.3]{BF} and Coupier and Davidov \cite[Proposition~3.2]{CouD14}.  In the former, similar results are obtained first for random polarizations and then combined with van Shaftingen's universal approximation of Steiner symmetrization by polarizations in \cite{VS1}, \cite{VS2}.  We note in passing that van Schaftingen's approximations can be adapted to other symmetrizations, including some that play no major role in the present paper, such as the spherical cap symmetrization; see \cite[Section~4.3]{VS2}.

It is important to bear in mind that in general the limit of a sequence of successive Steiner symmetrals of a convex body may not exist; see \cite[Example~2.1]{BBGV}.  However, a remarkable theorem of Klain \cite[Theorem~5.1]{Kla12} reveals another source of Steiner-universal sequences.  This states that if a sequence $(H_m)$ of $(n-1)$-dimensional subspaces is drawn from a finite set ${\mathcal{F}}$ of such subspaces, the successive Steiner symmetrals of any compact convex set, with respect to the subspaces in $(H_m)$, will converge to a compact convex set that is symmetric with respect to each subspace in ${\mathcal{F}}$ that occurs infinitely often in $(H_m)$. From this, Klain \cite[Corollary~5.4]{Kla12} is able to conclude that if the unit normal vectors to the subspaces in ${\mathcal{F}}$ contains an irrational basis and each element of the basis occurs infinitely often as a unit normal vector to some $H_m$, then $(H_m)$ is Steiner-universal.

A major goal of this paper is to obtain more information about universal sequences of subspaces.  In Theorem~\ref{4}, we adapt an argument from \cite{BBGV} to extend Klain's theorem to fiber symmetrization for $i\in \{1,\dots,n-1\}$, defined by (\ref{fhjk}) below.  This little-known but fundamental symmetrization reduces to Steiner symmetrization when $i=n-1$.  Fiber symmetrization becomes Minkowski symmetrization only when $i=0$, but nevertheless we prove in Theorem~\ref{corthm4} that Klain's theorem also holds for Minkowski symmetrization when $i\in \{1,\dots,n-1\}$, and indeed for any symmetrization with Properties 1, 4, 7, and 9 from Section~\ref{symm}.  In Theorem~\ref{thm5cor}, we prove a version of Klain's theorem for Schwarz symmetrization; in this case the limit compact convex set is rotationally symmetric with respect to each subspace in ${\mathcal{F}}$ that occurs infinitely often in $(H_m)$.  We also show that the same result holds for Minkowski-Blaschke symmetrization.  Since Klain's theorem does not apply when $i=0$, our results extend the theorem to all the main subspace symmetrizations in the literature except Blaschke symmetrization.  The latter is somewhat of an oddball, lacking even the monotonicity property (see \cite[Theorem~3.1]{BGG}), and we pay no attention to it in this paper beyond stating Problem~\ref{probmar28}.

Using our extensions of Klain's theorem, we are able to exhibit new universal sequences when ${\mathcal{E}}={\mathcal K}^n_n$.  Theorem~\ref{21June} does this for Minkowski symmetrization, for $i\in \{1,\dots,n-1\}$ and Theorem~\ref{22} serves the same purpose for Schwarz and Minkowski-Blaschke symmetrization.  Moreover, Theorem~\ref{21June} and \cite[Corollary~8.2]{BGG} yield, for all $i\in \{1,\dots,n-1\}$, weakly $\di$-universal sequences for any $i$-symmetrization process $\di$ satisfying Properties 1, 4, and 7 in Section~\ref{symm}. As far as we know, no explicit universal sequences have appeared before in the literature for $i\in \{1,\dots,n-2\}$, except for isolated results such as that of Tonelli \cite{Ton15} for Schwarz symmetrization when $i=1$ and $n=3$.

The transition from extensions of Klain's theorem to the existence of universal sequences requires an extra step.  From Klain's theorem we know that the limit compact convex set is symmetric with respect to each subspace in the finite set ${\mathcal{F}}$.  Therefore if ${\mathcal{F}}$ can be chosen so that reflection symmetry (or, for Schwarz and Minkowski-Blaschke symmetrization, rotational symmetry) in each subspace in ${\mathcal{F}}$ implies full rotational symmetry, then the limit compact convex set must be an origin-symmetric ball.  Hence, in the case of reflection symmetry, the problem is to find a finite set $\mathcal{F}$ such the closure of the subgroup of $O(n)$ generated by the reflections in the subspaces in $\mathcal{F}$ acts transitively on $S^{n-1}$.  Here we apply results from our paper \cite{BGG3}, summarized in Section~\ref{rotationalsymmetry}, that build on earlier work of
Burchard, Chambers, and Dranovski \cite{BCD} (see also \cite{EP}).

In Section~\ref{compact}, we examine symmetrization of compact sets. Steiner, Schwarz, and Minkowski symmetrization make sense for compact sets, and some convergence results are available in this setting; see, for example, \cite{BBGV}, \cite{Ton15}, \cite{VS3}, \cite{Vol}, and \cite{V}.  To these we add the observation, in Theorem~\ref{july24thm}, that Klain's theorem holds for the Schwarz symmetrization of compact sets.  By \cite[Examples~2.1 and~2.4]{BBGV}, with the sequence ($\alpha_m$) of reals there chosen so that their sum converges, we know that the limit of a sequence of successive Steiner symmetrals of a compact set may exist but be non-convex.  Thus there is no obvious direct relationship between convergence for convex bodies and convergence for compact sets. Nevertheless, in Theorems~\ref{thmdec17} and~\ref{thmjan9}, we prove that if $i\in \{1,\dots,n-1\}$, any Schwarz-universal (or Minkowski-universal) sequence for the class of convex bodies is also Schwarz-universal (or Minkowski-universal, respectively) for the class of compact sets.  Here we regard Schwarz symmetrization for $i=n-1$ as Steiner symmetrization.  These results, together with those mentioned above, show that the eight properties (weakly) Steiner-universal for convex bodies, (weakly) Minkowski-universal for convex bodies, (weakly) Steiner-universal for compact sets, and (weakly) Minkowski-universal for compact sets are all equivalent.  A tool in the proofs of Theorems~\ref{thmdec17} and~\ref{thmjan9} is the notion of the Kuratowski limit superior or inferior of a sequence of sets.

In addition to the principal results discussed above, we prove a new containment result relating various symmetrals, Lemma~\ref{1}; an analog of \cite[Theorem~8.1]{BGG} for Schwarz symmetrization, Theorem~\ref{2}; and a characterization of Schwarz symmetrization in terms of its properties, Theorem~\ref{Schwarz}.

Throughout the paper we provide, whenever we can, examples to show that the various assumptions we make are necessary.  The final Section~\ref{problems} lists some problems left open by our study.

For most applications, for example in establishing geometric inequalities, basic convergence results for symmetrizations suffice.  We end this introduction, however, with a nod to a quite different line of investigation, namely, determining the least number of successive symmetrals required to transform a set into one in some sense close to an origin-symmetric ball.  Such results on rates of convergence often require very delicate analysis, as evidenced by the deep work of Bourgain, Klartag, Lindenstrauss, Milman, and others, and have obvious applications to geometric asymptotic analysis, for example.  See \cite{Kla02}, \cite{Kla04}, and the references given there. A worthwhile endeavor, but one that must wait for future research, would be to determine rates of convergence for symmetrizations in the general setting considered here.

We are very grateful to a referee whose extensive knowledge and incisive remarks led to substantial improvements.

\section{Preliminaries}\label{subsec:notations}

As usual, $S^{n-1}$ denotes the unit sphere and $o$ the origin in Euclidean $n$-space $\R^n$ with Euclidean norm $\|\cdot\|$.  We assume throughout that $n\ge 2$.   The term {\em ball} in $\R^n$ will always mean an $n$-dimensional ball unless otherwise stated. The unit ball in $\R^n$ will be denoted by $B^n$ and $B(x,r)$ is the ball with center $x$ and radius $r$. If $x,y\in \R^n$, we write $x\cdot y$ for the inner product and $[x,y]$ for the line segment with endpoints $x$ and $y$. If $x\in \R^n\setminus\{o\}$, then $x^{\perp}$ is the $(n-1)$-dimensional subspace orthogonal to $x$. Throughout the paper, the term {\em subspace} means a linear subspace.

If $X$ is a set,  we denote by $\lin X$, $\conv X$, $\cl X$, and $\dim X$ the {\it linear hull}, {\it convex hull}, {\it closure}, and {\it dimension} (that is, the dimension of the affine hull) of $X$, respectively.  If $H$ is a subspace of $\R^n$, then $X|H$ is the (orthogonal) projection of $X$ on $H$ and $x|H$ is the projection of a vector $x\in \R^n$ on $H$.

If $X$ and $Y$ are sets in $\R^n$ and $t\ge 0$, then $tX=\{tx:x\in X\}$ is the dilate of $X$ by the factor $t$ and
$$X+Y=\{x+y: x\in X, y\in Y\}$$
denotes the {\em Minkowski sum} of $X$ and $Y$.

When $H$ is a fixed subspace of $\R^n$, we write $R_HX$ for the {\em reflection} of $X$ in $H$, i.e., the image of $X$ under the map that takes $x\in \R^n$ to $2(x|H)-x$.  If $R_HX=X$, we say $X$ is {\em $H$-symmetric}. If $H=\{o\}$, we instead write $-X=(-1)X$ for the reflection of $X$ in the origin and {\it $o$-symmetric} for $\{o\}$-symmetric.  A set $X$ is called {\em rotationally symmetric} with respect to the $i$-dimensional subspace $H$ if for all $x\in H$, $X\cap (H^{\perp}+x)$ is a union of $(n-i-1)$-dimensional spheres, each with center at $x$.  These are just the sets that are invariant under each element of $O(n)$ or $SO(n)$ that fixes $H$, i.e., act as the identity on $H$.  If $\dim H=n-1$, then $X$ is rotationally symmetric with respect to $H$ if and only if it is $H$-symmetric. If $K$ is a compact convex set, then $K$ is rotationally symmetric with respect to $H$ precisely when for all $x\in H$, $K\cap (H^{\perp}+x)=r_x(B^n\cap H^{\perp})+x$ for some $r_x\ge 0$.  The term {\em $H$-symmetric spherical cylinder} will always mean a set of the form $(B(x,r)\cap H)+s(B^n\cap H^{\perp})=(B(x,r)\cap H)\times s(B^n\cap H^{\perp})$, where $r,s>0$.  Thus these sets are solid cylinders whose $H$- and $H^{\perp}$-components are centered at $x$ and $o$, respectively.  Of course, $H$-symmetric spherical cylinders are rotationally symmetric with respect to both $H$ and $H^{\perp}$.

The phrase {\em translate orthogonal to $H$} means translate by a vector in $H^{\perp}$.

We write ${\mathcal H}^k$ for $k$-dimensional Hausdorff measure in $\R^n$, where $k\in\{1,\dots, n\}$.

The Grassmannian of $k$-dimensional subspaces in $\R^n$ is denoted by ${\mathcal{G}}(n,k)$.

We denote by ${\mathcal C}^n$ the class of nonempty compact subsets of $\R^n$. Let ${\mathcal K}^n$ be the class of nonempty compact convex
subsets of $\R^n$ and let ${\mathcal K}^n_n$ be the class of {\em convex bodies}, i.e., members of ${\mathcal K}^n$ with interior points.  A subscript $s$ denotes the $o$-symmetric sets in these classes.  If $K\in {\mathcal K}^n$, then
$$
h_K(x)=\sup\{x\cdot y: y\in K\},
$$
for $x\in\R^n$, defines the {\it support function} $h_K$ of $K$.  The texts by Gruber \cite{Gru07} and Schneider \cite{Sch93} contain a wealth of useful information about convex sets and related concepts such as the {\em intrinsic volumes} $V_j$, $j\in\{1,\dots, n\}$ (see \cite[Section~6.4]{Gru07}, \cite[p.~208]{Sch93}, and also \cite[Appendix~A]{Gar06}).  In particular, if $K\in {\mathcal K}^n$, then $V_{n}(K)={\mathcal H}^n(K)$ is the {\em volume} of $K$, and $V_{n-1}(K)$ and $V_1(K)$ are (up to constants independent of $K$) the {\em surface area} and {\em mean width} of $K$, respectively; see \cite[p.~104]{Gru07} or \cite[(A.28), (A.35), (A.50)]{Gar06}. Intrinsic volumes are independent of the dimension of the ambient space, so that if $\dim K=k$, then $V_k(K)={\mathcal H}^k(K)$, and in this case we prefer to write $V_k(K)$.  By $\kappa_n$ we denote the volume $V_n(B^n)$ of the unit ball in $\R^n$.

\section{Rotational symmetry via symmetries in finitely many subspaces}\label{rotationalsymmetry}

In this section, we summarize the results we need from \cite{BGG3} that address the problem of finding finite sets of $i$-dimensional subspaces such that reflections in these subspaces, or (full) rotational symmetries with respect to subspaces, generate full rotational symmetry. We refer to \cite{BGG3} for comments on related earlier work of Burchard, Chambers, and Dranovski \cite{BCD} and others.

\begin{thm}\label{teo:sphericalsym_lines}
{\em (\cite[Theorem~3.2]{BGG3}.)} Let $H_j\in {\mathcal{G}}(n,1)$, $j=1,\dots,n$, be such that

\noindent{\rm{(i)}} at least two of them form an angle that is an irrational multiple of $\pi$,

\noindent{\rm{(ii)}} $H_1+\cdots+H_n=\R^n$, and

\noindent{\rm{(iii)}} $\{ H_1,\dots,H_n\}$ cannot be partitioned into two mutually orthogonal nonempty subsets.

If $E\subset\Sn$ is nonempty, closed, and such that $R_{H_j}E=E$, $j=1,\dots,n$, then $E=\Sn$.
\end{thm}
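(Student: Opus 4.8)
The plan is to enlarge, step by step, the group of isometries of $\R^n$ under which $E$ is known to be invariant, producing a chain of subspaces $V$ with $E$ invariant under $SO(n)[V]$ and with $\dim V$ growing by one at each stage, until $V=\R^n$; since $E\neq\emptyset$ and $SO(n)$ acts transitively on $\Sn$, this gives $E=\Sn$. The engine is Lemma~\ref{lemma_MS}, which upgrades invariance under all rotations about a single axis, together with one isometry tilting that axis, to full sphericity. For the \emph{initial step}, use (i): we may assume $H_1$ and $H_2$ meet at an angle $\theta$ with $\theta/\pi$ irrational, so $H_1\neq H_2$ and $P:=\myspan\{H_1,H_2\}$ is a plane; the map $R_{H_1}R_{H_2}$ is the identity on $P^\perp$ and a rotation of $P$ through $2\theta$, and since $\theta/\pi\notin\Q$ its powers are dense in the circle group of rotations of $P$. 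As $E$ is closed and $R_{H_1}R_{H_2}$-invariant, $E$ is invariant under every element of $SO(n)[P]$, so the chain starts with $V=P$ (for $n=2$ this already gives $V=\R^2$ and we are done).

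For the \emph{propagation step}, suppose $E$ is invariant under $SO(n)[V]$ with $m:=\dim V\ge 2$, and suppose some $H_j$ is neither contained in $V$ nor orthogonal to $V$. Let $e$ be a unit vector along $H_j$ and write $e=p+q$ with $p=e|V$ and $q=e|V^\perp$; then $p\neq 0$ and $q\neq 0$. Put $\hat q=q/\|q\|$ and $V'=V\oplus\myspan\{\hat q\}$, so $\dim V'=m+1$ and $V'=V+H_j$; I claim $E$ is invariant under $SO(n)[V']$. Slice $E$ by the affine subspaces $V'+x$, $x\in V'^\perp$, setting $E_x=\{v\in V':v+x\in E\}$; since $E\subset\Sn$, $E_x$ is a closed subset of the sphere of $V'$ of radius $\sqrt{1-\|x\|^2}$. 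Because $SO(n)[V]$ fixes $V^\perp\supseteq V'^\perp$ and rotates $V$, each $E_x$ is carried to itself by $SO(n)[V]$ and, viewed inside the ambient space $V'$, is invariant under the full group of rotations of $V'$ fixing $\hat q$; as $m\ge 2$ this is exactly the hypothesis on the ``latitude'' subgroup needed for Lemma~\ref{lemma_MS} inside $V'$ with axis $\hat q$ (there the latitude spheres are $(m-1)$-spheres, hence connected, so its proof applies with this orientation-preserving subgroup in place of the full one).

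It remains to exhibit an isometry that is a symmetry of $E$, that preserves every slice $V'+x$, and that carries $\hat q$ off $\{\hat q,-\hat q\}$: given such an isometry, Lemma~\ref{lemma_MS} applied inside $V'$ to each nonempty $E_x$ (after rescaling to the unit sphere) forces $E_x$ to be the whole sphere of $V'$ of radius $\sqrt{1-\|x\|^2}$, so $E$ is a union of such full spheres over a subset of $V'^\perp$ and hence invariant under $SO(n)[V']$. I would take $\psi_\rho:=R_{H_j}\rho R_{H_j}$ for $\rho\in SO(n)[V]$. One has $R_{H_j}V'=V'$ (since $H_j\subset V'$ and $R_{H_j}$ is $-I$ on the complement of $H_j$ in $V'$) and $\rho V'=V'$ (since $\rho$ rotates $V$ and fixes $\hat q$), while $R_{H_j}$ and $\rho$ act on $V'^\perp=V^\perp\cap H_j^\perp$ as $-I$ and $I$ respectively; hence each $\psi_\rho$ restricts to an isometry of $V'$ equal to the identity on $V'^\perp$, so it preserves all slices, and it is a symmetry of $E$. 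Finally $\rho$ can be chosen so that $\psi_\rho(\hat q)\neq\pm\hat q$: the map $\rho\mapsto\psi_\rho(\hat q)$ is continuous on the connected group $SO(n)[V]$ and equals $\hat q$ at $\rho=I$, so if $\psi_\rho(\hat q)\in\{\hat q,-\hat q\}$ for every $\rho$ it would equal $\hat q$ for every $\rho$; applying $R_{H_j}$ this makes $R_{H_j}\hat q$ fixed by all of $SO(n)[V]$, hence $R_{H_j}\hat q\in V^\perp$. But $R_{H_j}\hat q=2(\hat q\cdot e)\,e-\hat q=2\|q\|\,e-\hat q$ has $V$-component $2\|q\|\,p\neq 0$, a contradiction.

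To run the induction, take $V$ maximal among subspaces of dimension at least $2$ for which $E$ is invariant under $SO(n)[V]$. If $V\neq\R^n$, then some $H_j$ is neither contained in $V$ nor orthogonal to $V$: otherwise $\{H_1,\dots,H_n\}$ partitions into $\{H_j\subseteq V\}$ and $\{H_j\perp V\}$, the first part nonempty (else $\sum_j H_j\subseteq V^\perp\subsetneq\R^n$, against (ii)), the second nonempty (else $\sum_j H_j\subseteq V\subsetneq\R^n$, against (ii)), and the two parts mutually orthogonal, against (iii). The propagation step applied to such an $H_j$ then gives invariance of $E$ under $SO(n)[V+H_j]$, contradicting maximality; so $V=\R^n$ and $E=\Sn$. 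The crux, and the step I expect to be hardest, is the propagation step --- above all the construction of the auxiliary isometry $\psi_\rho$, which must simultaneously respect the slicing of $E$ by translates of $V'$ and move the axis $\hat q$, so that Lemma~\ref{lemma_MS} can be fed to each slice inside the lower-dimensional ambient space $V'$. Extracting the ``connecting'' line $H_j$ from hypotheses (ii) and (iii) is a subsidiary point, though elementary.
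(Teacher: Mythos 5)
Your proof is correct and follows essentially the same route as the paper's: an induction that enlarges, one dimension at a time, the subspace $V$ for which $E$ is known to contain the full spheres $\Sn\cap(V+v)$, with Kronecker's theorem for the base case and Lemma~\ref{lemma_MS} applied to slices by translates of the enlarged subspace for the inductive step (and your observation that the lemma survives with the $SO$ latitude group once the ambient dimension is at least $3$ is sound, since the latitude orbits are then still full subspheres). The only substantive difference is the auxiliary tilting isometry: the paper takes $\phi_0=R_{H_{m+1}}R_{H_1}$ and verifies $\phi_0w\neq \pm w$ by direct computation, whereas you take the conjugate $R_{H_j}\rho R_{H_j}$ and exclude the degenerate case by a connectedness and fixed-vector argument; both are valid.
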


Let $2\leq i\leq n/2$ and let $\alpha_1,\dots,\alpha_i$ be an increasing sequence in $(0,\pi/2)$ such that $\pi, \alpha_1,\dots, \alpha_i$ are linearly independent over $\Q$. Let $H_1, H_2, H_3\in{\mathcal{G}}(n,i)$ be defined by
\begin{equation}\label{choice_of_the_planes}
\begin{aligned}
H_1=&\lin\{ e_1,e_3,\dots,e_{2i-1}\},\\
H_2=&\lin\{\cos \alpha_j\, e_{2j-1}+\sin \alpha_j\, e_{2j} : j=1,\dots,i\},\quad{\text{and}}\\
H_3=&\lin\{\{\cos \alpha_1\, e_{1}+\sin \alpha_1\, e_{2i}\}\cup\{\cos \alpha_j\, e_{2j-1}+\sin \alpha_j\, e_{2j-2}\} : j=2,\dots,i\}.
\end{aligned}
\end{equation}

\begin{thm}\label{finthm}
{\em (\cite[Theorem~3.8]{BGG3}.)} Let $k\ge 3$ and let $H_j\in{\mathcal{G}}(n,i)$, $j=1,\dots,k$. If $2\le i\le n/2$, assume that

{\noindent{\rm{(i)}}} $H_1$, $H_2$, and $H_3$ are as in \eqref{choice_of_the_planes},

{\noindent{\rm{(ii)}}}  $H_1+\dots+H_k=\R^n$, and

{\noindent{\rm{(iii)}}} for each $j=3,\dots,k-1$,
$$
H_{j+1}\cap\left(H_1+\dots+H_{j}\right)^\perp=\{o\}.
$$
If $n/2< i\leq n-2$, assume that {\em (i)--(iii)} are satisfied with each $H_j$ replaced by $H_j^\perp$.  If $E\subset \Sn$ is nonempty, closed, and such that $R_{H_j} E=E$ for $j=1,\dots,k$, then $E=\Sn$.
\end{thm}

\begin{cor}\label{cor21June}
{\em (\cite[Corollary~3.9]{BGG3}.)} Let $1\leq i\leq n-1$ and let
$$
k=
\begin{cases}
n, & \text{if $i=1$ or $i=n-1$,}\\
\lceil n/i\rceil+1, & \text{if $1<i\leq n/2$,}\\
\lceil n/(n-i)\rceil+1, & \text{if $n/2\leq i<n-1$.}
\end{cases}
$$
There exist $H_j\in{\mathcal{G}}(n,i)$, $j=1,\dots,k$, such that if $E\subset \Sn$ is nonempty, closed, and such that $R_{H_j} E=E$ for $j=1,\dots, k$, then $E=\Sn$.  Hence, if $F\subset \R^n$ is closed and $R_{H_j} F=F$ for $j=1,\dots, k$ (or $K\in{\mathcal{K}}^n_n$ and $R_{H_j}K=K$ for $j=1,\dots, k$), then $F$ is a union of $o$-symmetric spheres (or $K$ is an $o$-symmetric ball, respectively).
\end{cor}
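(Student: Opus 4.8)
The plan is to treat the four ranges of $i$ in the definition of $k$ separately, in each case exhibiting an explicit configuration $H_1,\dots,H_k$ that meets the hypotheses of the appropriate theorem proved above, and then to deduce the assertion about convex bodies from the assertion about closed subsets of $\Sn$. So the steps are: (1) dispose of $i=1$ and $i=n-1$ using Theorem~\ref{teo:sphericalsym_lines} and the Burchard--Chambers--Dranovski result; (2) dispose of $2\le i\le n/2$ using Theorem~\ref{finthm}; (3) reduce $n/2\le i<n-1$ to the previous case by passing to orthogonal complements; (4) upgrade from $E\subset\Sn$ to $K\in{\mathcal K}^n_n$ via support functions.

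For $i=1$ and $i=n-1$ we have $k=n$. First I would produce $n$ lines $L_1,\dots,L_n$ satisfying hypotheses (i)--(iii) of Theorem~\ref{teo:sphericalsym_lines}; a convenient choice is $L_1=\lin\{e_1\}$, $L_2=\lin\{\cos\alpha\,e_1+\sin\alpha\,e_2\}$ with $\alpha/\pi$ irrational, and $L_j=\lin\{e_{j-1}+e_j\}$ for $j=3,\dots,n$. Hypothesis (i) is immediate; (ii) holds because $e_j\in L_1+\cdots+L_j$ by an easy induction; and (iii) holds because consecutive lines are non-orthogonal, so the non-orthogonality graph on $\{1,\dots,n\}$ contains the path $1$--$2$--$\cdots$--$n$ and hence is connected. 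For $i=1$ these $L_j$ are the required subspaces, and Theorem~\ref{teo:sphericalsym_lines} finishes the argument. For $i=n-1$ I would take $H_j=L_j^\perp\in{\mathcal{G}}(n,n-1)$ and observe that $R_{H_j}E=E$ is precisely the hypothesis of the Burchard--Chambers--Dranovski result \cite[Proposition~4.2]{BCD} quoted after Theorem~\ref{teo:sphericalsym_lines}, applied to $L_1,\dots,L_n$.

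For $2\le i\le n/2$ we have $k=\lceil n/i\rceil+1\ge 3$. I would fix an increasing sequence $\alpha_1,\dots,\alpha_i$ in $(0,\pi/2)$ with $\pi,\alpha_1,\dots,\alpha_i$ linearly independent over $\Q$ (a generic choice works), take $H_1,H_2,H_3$ as in \eqref{choice_of_the_planes}, and recall from the proof of Lemma~\ref{pr_sh1sh2sh3} that $H_1+H_2+H_3=\lin\{e_1,\dots,e_{2i}\}$. I would then define $H_4,\dots,H_k$ by ``tilting'': at stage $j$, if $H_1+\cdots+H_{j-1}=\lin\{e_1,\dots,e_d\}$, set $r=\min(i,n-d)$ and let $H_j=\lin\{\cos\beta_l\,e_l+\sin\beta_l\,e_{d+l}:1\le l\le r\}+\lin\{e_{r+1},\dots,e_i\}$ with arbitrary $\beta_l\in(0,\pi/2)$ (the second summand empty when $r=i$). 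Since $d\ge 2i\ge i$, each $H_j$ is an $i$-dimensional subspace whose orthogonal projection onto $\lin\{e_1,\dots,e_d\}$ is $i$-dimensional, which is exactly hypothesis (iii) of Theorem~\ref{finthm}; and $H_1+\cdots+H_j=\lin\{e_1,\dots,e_{\min(d+i,\,n)}\}$, so after $k$ steps the span has dimension $\min((k-1)i,\,n)=n$ because $(k-1)i=\lceil n/i\rceil\,i\ge n$, giving hypothesis (ii). Theorem~\ref{finthm} then yields $E=\Sn$. For $n/2\le i<n-1$ I would run this construction with $i$ replaced by $n-i$ (so $2\le n-i\le n/2$) and then pass to orthogonal complements, invoking the ``$R_{H_j^\perp}E=E$'' alternative in the conclusion of Theorem~\ref{finthm}.

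Finally, for $K\in{\mathcal K}^n_n$ with $R_{H_j}K=K$ for all $j$: since each $R_{H_j}$ is a symmetric orthogonal map, $h_K\circ R_{H_j}=h_{R_{H_j}K}=h_K$ on $\Sn$, so the nonempty closed set $E$ of points of $\Sn$ at which $h_K|_{\Sn}$ attains its maximum satisfies $R_{H_j}E=E$ for every $j$; by the first part $E=\Sn$, hence $h_K$ is constant on $\Sn$, and (its constant value being positive, as $K$ has interior points) $K$ is an $o$-symmetric ball. I expect no genuine obstacle, since Theorems~\ref{teo:sphericalsym_lines} and~\ref{finthm} carry the weight; the only care needed is bookkeeping, above all checking that the tilted subspaces $H_4,\dots,H_k$ simultaneously satisfy the transversality condition (iii) and the spanning condition (ii) of Theorem~\ref{finthm} --- that is, that the ``$+1$'' in $k=\lceil n/i\rceil+1$ is exactly what compensates for $H_1,H_2,H_3$ contributing only $2i$ rather than $3i$ dimensions.
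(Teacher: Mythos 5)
Your proposal is correct and follows essentially the same route as the paper: the same case split, with Theorem~\ref{teo:sphericalsym_lines} and the Burchard--Chambers--Dranovski result for $i=1,n-1$, Theorem~\ref{finthm} for $1<i\le n/2$, and passage to orthogonal complements for $n/2\le i<n-1$; your explicit ``tilting'' construction of $H_4,\dots,H_k$ just makes concrete what the paper leaves implicit when it asserts that hypothesis (iii) can be arranged so that each new subspace contributes $i$ fresh dimensions. The only cosmetic difference is the final reduction from $K$ to $E$: you take $E$ to be the set of maximizers of $h_K$ on $\Sn$, while the paper takes $E=r\,\partial K\cap\Sn$ for a suitable dilation; both work.
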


\begin{thm}\label{new}
{\em (\cite[Theorem~4.1]{BGG3}.)} Let $H_1,\dots,H_k$ be subspaces in $\R^n$ such that $1\le \dim H_j\le n-2$ for $j=1,\dots,k$.  The following statements are equivalent.

\noindent{\rm{(i)}} $H_1^{\perp}+\cdots+H_k^{\perp}=\R^n$ and $\{H_1^{\perp},\dots,H_k^{\perp}\}$ cannot be partitioned into two mutually orthogonal nonempty subsets.

\noindent{\rm{(ii)}} If $E\subset S^{n-1}$ is nonempty and closed, and for each $j=1,\dots,k$ and $x\in E$, we have $\Sn\cap (H_j^{\perp}+x)\subset E$, then $E=S^{n-1}$.

\noindent{\rm{(iii)}}
If $F\subset \R^n$ is closed and invariant under any rotation that fixes $H_j$ for each $j=1,\dots,k$, then $F$ is a union of $o$-symmetric spheres.

\noindent{\rm{(iv)}}
If $K\in{\mathcal{K}}^n_n$ is rotationally symmetric with respect to $H_j$ for each $j=1,\dots,k$, then $K$ is an $o$-symmetric ball.
\end{thm}

\begin{cor}\label{cor22June}
{\em (\cite[Corollary~4.2]{BGG3}.)} Let $1\leq i\leq n-2$ and let $k=\lceil n/(n-i)\rceil$. There exist $H_j\in{\mathcal{G}}(n,i)$, $j=1,\dots,k$, such that the statements in Theorem~\ref{new} hold.
\end{cor}

\section{$i$-Symmetrization and $i$-symmetrization processes}\label{symm}

Let $i\in\{0,\dots,n-1\}$ and let $H\in {\mathcal{G}}(n,i)$ be fixed. Let $\mathcal{B}\subset {\mathcal{C}}^n$ be a class of nonempty compact sets in $\R^n$ and let $\mathcal{B}_H$ denote the subclass of members of $\mathcal{B}$ that are $H$-symmetric. We call a map ${\di_H}:{\mathcal{B}}\rightarrow{\mathcal{B}}_H$ an {\em $i$-symmetrization} on $\mathcal{B}$ (with respect to $H$).  If $K\in {\mathcal{B}}$, the corresponding set ${\di_H}K$ is called a {\em symmetral}.  We consider the following properties, where it is assumed that the class ${\mathcal{B}}$ is appropriate for the properties concerned and that they hold for all $K,L\in {\mathcal{B}}$.  Recall that $R_HK$ is the reflection of $K$ in $H$.

\smallskip

1. ({\em Monotonicity} or {\em strict monotonicity}) \quad $K\subset L \Rightarrow \di_H K\subset \di_H L$ (or $\di_H K\subset \di_H L$ and $K\neq L \Rightarrow \di_H K\neq \di_H L$, respectively).

2. ({\em $f$-preserving}) \quad $f(\di_H K)=f(K)$, where $f:{\mathcal{B}}\to [0,\infty)$ is a set function.  In particular, we can take $f=V_j$, $j=1,\dots,n$, the $j$th intrinsic volume, though we generally prefer to write {\em mean width preserving}, {\em surface area preserving}, and {\em volume preserving}  when $j=1$, $n-1$, and $n$, respectively.

3. ({\em Idempotence}) \quad $\di_H^2 K=\di_H(\di_H K)=\di_H K$.

4. ({\em Invariance on $H$-symmetric sets})\quad $R_HK=K\Rightarrow\di_H K=K$.

5. ({\em Invariance on $H$-symmetric spherical cylinders})\quad If $K=D_r(x)+s(B^n\cap H^{\perp})$, where $s>0$ and $D_r(x)\subset H$ is the $i$-dimensional ball with center $x$ and radius $r>0$, then $\di_H K=K$.

6. ({\em Projection invariance}) \quad $(\di_H K)|H=K|H$.

7. ({\em Invariance under translations orthogonal to $H$ of $H$-symmetric sets}) \quad If $R_HK=K$ and $y\in H^{\perp}$, then $\di_H(K+y)=\di_H K$.

8. ({\em Rotational symmetry}) \quad  $\di_HK$ is rotationally symmetric with respect to $H$.

9. ({\em Continuity}) \quad $\di_HK_m\to \di_HK$ as $m\to\infty$ whenever $K_m\to K$ as $m\to\infty$ in the Hausdorff metric (cf.~\cite[p.~2310]{GHW}).

The six main symmetrizations of interest for this paper are defined and their properties summarized below.  In each, we take $K\in {\mathcal{K}}^n$ and $H\in {\mathcal{G}}(n,i)$; additional information and references can be found in \cite{BGG}.

Let $i\in \{1,\dots,n-1\}$.  Let $S_HK$ be such that for each $(n-i)$-dimensional plane $G$ orthogonal to $H$ and meeting $K$, the set $G\cap S_HK$ is a (possibly degenerate) $(n-i)$-dimensional closed ball with center in $H$ and $(n-i)$-dimensional volume equal to that of $G\cap K$.  When $i=n-1$, $G\cap S_HK$ is a (possibly degenerate) closed line segment with midpoint in $H$ and length equal to that of $G\cap K$, and $S_HK$ is called the {\em Steiner symmetral} of $K$ with respect to $H$.  When $i\in \{1,\dots,n-2\}$, $S_HK$ is the {\em Schwarz symmetral} of $K$ with respect to $H$.  See \cite[p.~57]{BGG}.  Steiner and Schwarz symmetrization are the original classical ones with myriad applications; see the references given in \cite[p.~52]{BGG}.

Let $i\in \{0,\dots,n-1\}$. The {\em Minkowski symmetral} of $K$ with respect to $H$ is defined by
\begin{equation}\label{april31}
M_HK=\frac12 K+\frac12 R_HK.
\end{equation}
(Recall that $R_HK$ is the reflection of $K$ in $H$.) See \cite[p.~57]{BGG}.

Let $i\in \{1,\dots,n-2\}$. The {\em Minkowski-Blaschke symmetral} $\overline{M}_HK$ of $K$ is defined for $u\in S^{n-1}$ via
$$
h_{\overline{M}_HK}(u)=
\begin{cases}
\frac{1}{{{\cH}^{n-i-1}}(S^{n-1}\cap (H^{\perp}+u))}
\int_{S^{n-1}\cap (H^{\perp}+u)}h_K(v)\,dv, & \text{if ${{\cH}^{n-i-1}}(S^{n-1}\cap (H^{\perp}+u))\neq 0$,}\\
h_K(u), & \text{otherwise.}
\end{cases}
$$
Thus the support function $h_K(u)$ of $K\in {\mathcal{K}}^n$ at $u\in S^{n-1}$ is replaced by the average of $h_K$ over the subsphere of $S^{n-1}$ orthogonal to $H$ and containing $u$.  See \cite[p.~58]{BGG} and note that we have simplified and corrected the definition given there.   We can extend the definition to $i=n-1$ if we interpret it to mean that $\overline{M}_HK=M_HK$ in this case.

Let $i\in \{0,\dots,n-1\}$.  The {\em fiber symmetral} $F_{H}K$ of $K$ is defined by
\begin{eqnarray}\label{fhjk}
F_{H}K&=&\bigcup_{x\in H}\left(\frac12 (K\cap (H^{\perp}+x))+\frac12R_H(K\cap (H^{\perp}+x))\right)\nonumber\\
&=&\bigcup_{x\in H}\left(\frac12 (K\cap (H^{\perp}+x))+\frac12(R_HK\cap (H^{\perp}+x))\right).
\end{eqnarray}
See \cite[p.~58]{BGG}.  Thus each non-degenerate section of $F_HK$ by an $(n-i)$-dimensional subspace orthogonal to $H$ is the Minkowski symmetral of the corresponding section of $K$.

Let $i\in\{1,\dots,n-1\}$. The {\em inner rotational symmetral} $I_HK$ of $K$ is such that for each $(n-i)$-dimensional plane $G$ orthogonal to $H$ and meeting $K$, the set $G\cap I_HK$ is a (possibly degenerate) $(n-i)$-dimensional ball with center in $H$ and radius equal to that of the (possibly degenerate) largest $(n-i)$-dimensional ball contained in $G\cap K$.  The {\em outer rotational symmetral} $O_HK$ of $K$ is the intersection of all rotationally symmetric with respect to $H$ convex bodies for which some translate orthogonal to $H$ contains $K$.

The Minkowski, Minkowski-Blaschke, fiber, inner rotational, and outer rotational symmetrizations are all important and very natural relatives of Steiner and Schwarz symmetrizations.  Indeed, there is a remarkable duality \cite[Theorem~7.1]{BGG} between Steiner and Minkowski symmetrization when $i=n-1$, which extends to one between fiber and Minkowski symmetrization when $i\in\{0,\dots,n-1\}$:  The fiber (Steiner, if $i = n-1$) symmetral of $K\in {\mathcal{K}}^n$ is the {\em union} of all $H$-symmetric compact convex sets such that some translate orthogonal to $H$ {\em is contained in} $K$, while the Minkowski symmetral of $K$ is the {\em intersection} of all $H$-symmetric compact convex sets such that some translate orthogonal to $H$ {\em contains} $K$.  The inner and outer rotational symmetrizations, introduced in \cite[Section~5]{BGG}, display exactly the same duality in a rotationally symmetric setting.  Minkowski-Blaschke symmetrization extends to $i\in\{1,\dots,n-2\}$ the case $i=n-1$ of Minkowski symmetrization in a different, but nevertheless direct, way to using (\ref{april31}).  Despite their names, both Minkowski and Minkowski-Blaschke symmetrizations were introduced by Blaschke and have found various uses (see \cite[Sections~1 and~3]{BGG}).

Some of the applications of the symmetrizations discussed in the previous paragraph stem from containment relations, to which we now turn our attention.  We begin with the following new result, which reduces to the known fact that $S_HK\subset M_HK$ when $i=n-1$.

\begin{lem}\label{1}
If $H\in {\mathcal{G}}(n,i)$, $i\in\{1,\dots,n-1\}$, and $K\in {\mathcal{K}}^n$, then $S_HK\subset \overline{M}_HK\subset O_HK$.
\end{lem}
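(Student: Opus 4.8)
The plan is to prove the two inclusions $S_HK\subset \overline{M}_HK$ and $\overline{M}_HK\subset O_HK$ separately, working at the level of support functions wherever possible since $\overline M_H$ is defined via support functions. Throughout, fix $H\in{\mathcal G}(n,i)$ with $i\in\{1,\dots,n-1\}$, and note that for $i=n-1$ the Minkowski--Blaschke symmetral coincides with the Minkowski symmetral, so both inclusions reduce to the classical facts $S_HK\subset M_HK\subset O_HK$; hence I may assume $i\in\{1,\dots,n-2\}$ for the main argument.

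For the first inclusion $S_HK\subset\overline M_HK$, I would slice by $(n-i)$-dimensional planes orthogonal to $H$. Fix $x\in H$ with $G=(H^\perp+x)$ meeting $K$, and write $K(x)=K\cap G$. The Schwarz symmetral has $S_HK\cap G$ equal to the centered $(n-i)$-ball in $G$ of the same $(n-i)$-volume as $K(x)$. It therefore suffices to show that this ball is contained in $\overline M_HK\cap G$, i.e., that its radius $\rho(x)$ is at most $h_{\overline M_HK}(u)$ for every unit $u\in H^\perp$ (after recentering at $x$); equivalently, by the definition of $\overline M_H$, that $\rho(x)$ is at most the average of $h_K$ over the subsphere $S^{n-1}\cap(H^\perp+u)$. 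This is where I expect to invoke a comparison between a volume radius and a mean-width-type quantity on the $(n-i)$-dimensional slice: the ball of volume $V_{n-i}(K(x))$ has radius no larger than the "mean-width radius" of $K(x)$, which is exactly the average of the support function of $K(x)$ over directions in $H^\perp$. The remaining point is that the average of $h_{K(x)}$ over directions in $H^\perp$ is dominated by the corresponding average of $h_K$ itself (projection onto $G$ only decreases support-function values in directions lying in $H^\perp$). Combining these gives $\rho(x)\le h_{\overline M_HK}$ in every $H^\perp$-direction through $x$, which is the desired containment of slices. The technical heart here is the isoperimetric-type inequality on the slice — that among convex bodies in $\mathbb R^{n-i}$ with given mean width, the ball has the largest volume — equivalently that the volume-radius is bounded by the mean-width-radius; this is the step I would flag as the main obstacle, though it is a standard consequence of the Urysohn inequality.

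For the second inclusion $\overline M_HK\subset O_HK$, I would use the defining property of $O_HK$ as the intersection of all convex bodies $C$ that are rotationally symmetric with respect to $H$ and such that some translate of $C$ orthogonal to $H$ contains $K$. Since $\overline M_HK$ is $H$-symmetric and, by construction (its support function is constant on each subsphere orthogonal to $H$), rotationally symmetric with respect to $H$, and since $\overline M_H$ is monotone and idempotent (Properties 1 and 3) with $\overline M_HC\supset$ (an orthogonal translate of) the input whenever the input is already $H$-rotationally symmetric, it is enough to show: if $C$ is $H$-rotationally symmetric, convex, and a translate of $C$ orthogonal to $H$ contains $K$, then $C\supset\overline M_HK$. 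But support-function averaging is monotone and, for an $H$-rotationally symmetric $C$, fixes $h_C$ on each subsphere orthogonal to $H$; translating $C$ orthogonal to $H$ does not affect $h_C$ in directions lying in $H^\perp$, and one checks that the subspheres appearing in the definition of $\overline M_H$ are precisely those contained in affine copies of $H^\perp$. Thus $h_K\le h_C$ (up to an orthogonal translate) implies, upon averaging over each such subsphere, $h_{\overline M_HK}\le h_C$, whence $\overline M_HK\subset C$. Taking the intersection over all such $C$ yields $\overline M_HK\subset O_HK$.

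A cleaner route for the second inclusion, which I would adopt if the averaging bookkeeping becomes cumbersome, is to use Lemma numbering of earlier containments together with the duality description recalled in the excerpt: $O_HK$ is characterized as the smallest $H$-rotationally symmetric convex body containing (an orthogonal translate of) $K$, so it suffices to exhibit $\overline M_HK$ as the image of $K$ under a symmetrization that is monotone, idempotent, invariant on $H$-rotationally symmetric bodies, and dominated by passing to any rotationally symmetric superset — all of which follow from the support-function definition. Either way, the only genuinely non-formal ingredient is the Urysohn-type inequality on $(n-i)$-dimensional slices used in the first inclusion; the rest is a matter of unwinding definitions and using monotonicity of averaging.
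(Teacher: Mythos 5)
Your proof of the second inclusion $\overline{M}_HK\subset O_HK$ is essentially the paper's: one takes any convex body $L$ that is rotationally symmetric with respect to $H$ and satisfies $K\subset L+y$ for some $y\in H^{\perp}$, and uses monotonicity, invariance under translations orthogonal to $H$ of $H$-symmetric sets, and the fact that $\overline{M}_HL=L$ for such $L$. One caveat: translating by $y\in H^{\perp}$ \emph{does} change $h_C(u)$ for $u\in H^{\perp}$ (by $u\cdot y$); what saves the argument is that this term averages to zero over each subsphere $S^{n-1}\cap(H^{\perp}+u)$, which is exactly Property~7 for $\overline{M}_H$. So that half is fine.

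The first inclusion, however, has a genuine gap at the reduction step. You assert that containment of the ball $x+\rho(x)(B^n\cap H^{\perp})$ in $(\overline{M}_HK)\cap(H^{\perp}+x)$ is equivalent to $\rho(x)\le h_{\overline{M}_HK}(u)$ for unit $u\in H^{\perp}$, and then identify that quantity with the average of $h_K$ over $S^{n-1}\cap H^{\perp}$. But for $u\in H^{\perp}$, $h_{\overline{M}_HK}(u)$ is the support function of the \emph{projection} $\overline{M}_HK|H^{\perp}$, i.e., the radius of the \emph{largest} slice of $\overline{M}_HK$ orthogonal to $H$, not of the slice at $x$. Your chain therefore only shows that every Schwarz slice fits inside the largest Minkowski--Blaschke slice, which does not yield $S_HK\subset\overline{M}_HK$. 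The correct way to finish --- and this is what the paper does --- is to apply $\overline{M}_H$ to the slice itself: one checks from the definition that $\overline{M}_H\bigl(K\cap(H^{\perp}+x)\bigr)=t_x(B^n\cap H^{\perp})+x$, where $t_x$ is precisely the mean-width radius of that slice, so Urysohn's inequality in $H^{\perp}+x$ gives $\rho(x)\le t_x$, and then \emph{monotonicity} of $\overline{M}_H$ applied to $K\cap(H^{\perp}+x)\subset K$, together with the fact that $\overline{M}_H\bigl(K\cap(H^{\perp}+x)\bigr)$ lies in the plane $H^{\perp}+x$, yields $t_x(B^n\cap H^{\perp})+x\subset(\overline{M}_HK)\cap(H^{\perp}+x)$. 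Your key ingredient (Urysohn on the slice) is the right one; what is missing is this monotonicity step, which must replace your comparison with the global support function.
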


\begin{proof}
For the right-hand inclusion, let $L$ be rotationally symmetric with respect to $H$ and such that $K\subset L+y$, where $y\in H^{\perp}$.  Since $\overline{M}_H$ is monotonic and invariant under translations of $H$-symmetric sets orthogonal to $H$, we obtain
$$\overline{M}_HK\subset \overline{M}_H(L+y)=\overline{M}_HL=L,$$
where the previous equality follows directly from the definition of $\overline{M}_H$ and the fact that $L$ is rotationally symmetric with respect to $H$. Therefore $\overline{M}_HK\subset L$ and hence $\overline{M}_HK\subset O_HK$ by the definition of $O_H$.

For the left-hand inclusion, let $x\in H$.  By the definition of $S_HK$, we have
$$(S_HK)\cap (H^{\perp}+x)=s_x(B^n\cap H^{\perp})+x,$$
for some $s_x\ge 0$ satisfying
\begin{equation}\label{jan221}
V_{n-1}(K\cap (H^{\perp}+x))=V_{n-1}(s_x(B^n\cap H^{\perp})+x).
\end{equation}
By the definition of $\overline{M}_HK$, we have that for $u\in S^{n-1}$, $h_{\overline{M}_HK}(u)$ is the average of $h_K(v)$ over $v\in S^{n-1}\cap (H^{\perp}+u)$.  It follows that
$$\overline{M}_H(K\cap (H^{\perp}+x))=t_x(B^n\cap H^{\perp})+x\subset H^{\perp}+x,$$
for some $t_x\ge 0$ satisfying
\begin{equation}\label{jan222}
V_{1}(K\cap (H^{\perp}+x))=V_{1}(t_x(B^n\cap H^{\perp})+x).
\end{equation}
By Urysohn's inequality (\cite[Theorem~9.3.2, p.~365]{Gar06} with $p=1$ or \cite[(7.21), p.~382]{Sch93}) in $H^{\perp}+x\approx \R^{n-1}$, the inequalities (\ref{jan221}) and (\ref{jan222}) imply that $s_x\le t_x$.  It follows, by the monotonicity of $\overline{M}_H$, that
$$(S_HK)\cap (H^{\perp}+x)\subset \overline{M}_H\left( K\cap (H^{\perp}+x)\right) \subset (\overline{M}_HK)\cap (H^{\perp}+x)$$
and hence that $S_HK\subset \overline{M}_HK$.
\end{proof}

The following theorem summarizes all the known inclusions between the various symmetrals.  It should be added that $F_HK=M_HK$ when $i=0$.

\begin{thm}\label{thmapril3}
If $H\in {\mathcal{G}}(n,i)$, $i\in \{1,\dots,n-1\}$, and $K\in {\mathcal{K}}^n$, then
\begin{equation}\label{april32}
I_HK\subset F_HK\subset M_HK\subset O_HK
\end{equation}
and
\begin{equation}\label{april33}
I_HK\subset S_HK\subset \overline{M}_HK\subset O_HK.
\end{equation}
When $i=n-1$, we have $I_HK=F_HK=S_HK$ and $M_HK=\overline{M}_HK=O_HK$.
\end{thm}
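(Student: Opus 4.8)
The plan is to assemble (\ref{april32}) and (\ref{april33}) from a few inclusions, most of which are either already available or follow quickly from the definitions via a section-by-section (or support-function) comparison. First I would record the two easy endpoints. For the inclusion $I_HK\subset F_HK$: fix $x\in H$ and compare the sections by $G=H^\perp+x$. The section $G\cap I_HK$ is the ball in $G$ centered in $H$ whose radius is that of the largest $(n-i)$-ball inside $G\cap K$; call that ball $B$, so $B\subset G\cap K$ and $B$ is centered at some point of $G$. Then $\frac12 B+\frac12 R_H B$ is an $H$-symmetric ball of the same radius centered in $H$, and by (\ref{fhjk}) it lies in the section of $F_HK$; since this ball equals $G\cap I_HK$, we get $G\cap I_HK\subset G\cap F_HK$ for every $x$, hence $I_HK\subset F_HK$. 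For $M_HK\subset O_HK$: if $L$ is rotationally symmetric with respect to $H$ and $K\subset L+y$ with $y\in H^\perp$, then $R_HK\subset R_HL+R_Hy=L-y$, so $M_HK=\frac12 K+\frac12 R_HK\subset \frac12(L+y)+\frac12(L-y)=L$ because $L$ is convex; intersecting over all such $L$ gives $M_HK\subset O_HK$.

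Next I would treat the two middle inclusions. For $F_HK\subset M_HK$: the section of $F_HK$ by $G=H^\perp+x$ (for $x\in H$) is $\frac12(K\cap G)+\frac12(R_HK\cap G)$, which is contained in $\left(\frac12 K+\frac12 R_HK\right)\cap G=(M_HK)\cap G$ simply because $\frac12 a+\frac12 b$ with $a\in K\cap G$, $b\in R_HK\cap G$ is a point of $M_HK$ lying in $G$. Taking the union over $x\in H$ yields $F_HK\subset M_HK$. The inclusion $I_HK\subset S_HK$ is a further section comparison: in $G=H^\perp+x$, the largest $(n-i)$-ball inside $G\cap K$ has $(n-i)$-volume at most that of $G\cap K$, and $G\cap S_HK$ is the centered $(n-i)$-ball with exactly that volume, so its radius dominates the radius of $G\cap I_HK$; monotonicity-type reasoning on concentric balls then gives $G\cap I_HK\subset G\cap S_HK$. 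The remaining piece of (\ref{april33}), namely $S_HK\subset\overline M_HK\subset O_HK$, is exactly Lemma~\ref{1}, so nothing new is needed there.

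Finally I would dispatch the case $i=n-1$. Here every $(n-i)=1$-dimensional section $G\cap K$ of a convex body is a segment, the "largest $1$-ball inside it" is the whole segment, and a centered $1$-ball of equal length is again the whole segment reflected to be symmetric about $H$; hence $G\cap I_HK=G\cap S_HK=G\cap F_HK$ for each $G$, giving $I_HK=F_HK=S_HK$, the common value being the Steiner symmetral. On the other side, $\overline M_H$ was defined to coincide with $M_H$ when $i=n-1$, so $M_HK=\overline M_HK$; and $O_HK$, the intersection of all $H$-symmetric convex bodies containing a translate of $K$ orthogonal to $H$, equals the Steiner-type "circumscribed" symmetral, which for a segment section is the centered segment of length $h_{K}(u)+h_K(-u)$ in the normal direction — the same as $M_HK$ in that direction. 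A clean way to see $M_HK=O_HK$ is: $M_HK$ is $H$-symmetric and contains $K$ up to translation orthogonal to $H$ (translate $K$ by $-y$ where $2y=$ the $H^\perp$-component shift), so $O_HK\subset M_HK$ by definition of the intersection, while $M_HK\subset O_HK$ was shown above; hence equality, and then $M_HK=\overline M_HK=O_HK$.

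The steps are all routine; the only mild obstacle is bookkeeping in the $i=n-1$ identification of $O_HK$ with $M_HK$ — one must check both that $M_HK$ is a legitimate member of the family defining $O_HK$ (so that $O_HK\subset M_HK$) and, conversely, the already-established $M_HK\subset O_HK$. Everything else reduces to comparing sections by planes orthogonal to $H$ or invoking Lemma~\ref{1} directly.
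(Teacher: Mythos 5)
Most of your argument is sound, and where the paper simply cites \cite{BGG} for the inclusions $I_HK\subset F_HK\subset M_HK$, $I_HK\subset S_HK$, and $M_HK\subset O_HK$, you supply short self-contained section-by-section proofs that are correct; your use of Lemma~\ref{1} for the remaining inclusions in \eqref{april33} and the identifications $I_HK=F_HK=S_HK$ and $M_HK=\overline M_HK$ for $i=n-1$ also check out.

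The genuine gap is your proof that $O_HK\subset M_HK$ when $i=n-1$. You assert that $M_HK$ contains a translate of $K$ orthogonal to $H$, so that $M_HK$ belongs to the family whose intersection defines $O_HK$. This is false in general: take $n=2$, $H$ the $x$-axis, and $K=[(0,0),(1,1)]$ (or $K+\ee B^2$ if you want a body). Then $M_HK=\frac12 K+\frac12 R_HK$ is the square with vertices $(0,0)$, $(\tfrac12,\tfrac12)$, $(1,0)$, $(\tfrac12,-\tfrac12)$, whose longest chord in the direction $(1,1)$ has length $1/\sqrt2$, while $K$ has length $\sqrt2$; so no translate of $K$ at all, vertical or otherwise, lies in $M_HK$. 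The correct route is the duality statement \cite[Theorem~7.1]{BGG} quoted in Section~\ref{symm}: for $i=n-1$ the Minkowski symmetral equals the intersection of \emph{all} $H$-symmetric compact convex sets some translate of which orthogonal to $H$ contains $K$, and since for a hyperplane $H$ ``rotationally symmetric with respect to $H$'' coincides with ``$H$-symmetric'' for compact convex sets, that intersection is $O_HK$ (up to the harmless passage from convex bodies to compact convex sets via $\ee$-parallel bodies). If you want to avoid the citation, a support-function computation does the job: for $y\in H^{\perp}$ the set $\conv\bigl((K+y)\cup R_H(K+y)\bigr)+\ee B^n$ is an $H$-symmetric convex body containing $K+y$, its support function at $v$ is $\max\{h_K(v)+y\cdot v,\,h_K(R_Hv)-y\cdot v\}+\ee$, and minimizing over $y\in H^{\perp}$ and $\ee>0$ for each fixed $v$ gives $\tfrac12\bigl(h_K(v)+h_K(R_Hv)\bigr)=h_{M_HK}(v)$; hence $O_HK\subset M_HK$, which together with your correct inclusion $M_HK\subset O_HK$ yields equality.
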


\begin{proof}
The left-hand inclusion in (\ref{april32}) follows easily from the definitions of $I_HK$ and $F_HK$.  The middle inclusion is a consequence of \cite[Corollary~7.3]{BGG} and the right-hand inclusion is noted in \cite[p.~66]{BGG}.  The left-hand inclusion in (\ref{april33}) is also noted in \cite[p.~66]{BGG} and the remaining inclusions are given by Lemma~\ref{1}.  The stated equalities all follow directly from the definitions.
\end{proof}

The seven symmetrizations defined in this section possess all the Properties~1 and 3--8 above, except that only Steiner, Minkowski, and fiber symmetrization are invariant on $H$-symmetric sets and neither Minkowski nor fiber symmetrization is rotationally symmetric when $i<n-1$.  Regarding Property~2, $S_H$ (and hence $F_H$ when $i=n-1$) preserves volume, and $M_H$ (and hence $F_H$ when $i=0$) and $\overline{M}_H$ preserve mean width.  For more details, see \cite{BGG}, especially Sections~3 and~5.  Regarding Property~9, which was not considered in \cite{BGG}, all seven symmetrizations are continuous on ${\mathcal{K}}^n_n$ and all except $S_H$, $F_H$ (for $i\in\{1,\dots,n-1\}$), and $I_H$ are continuous on ${\mathcal{K}}^n$.  (For $O_H$, let $\ee>0$ and let $K, L\in {\mathcal{K}}^n$ satisfy $K \subset L + \ee B^n$.  Note that $O_H L$ is the smallest rotationally symmetric with respect to $H$ compact convex set with a translate containing $L$, because the intersection of rotationally symmetric convex bodies is a rotationally symmetric compact convex set.  Then
$L \subset O_H L + y$ for some $y \in H^{\perp}$ and hence
$K \subset O_H L + \ee B^n + y$.  Therefore $O_H L + \ee B^n$ is a rotationally symmetric convex body with a translate containing $K$ and it follows that $O_H K \subset O_H L + \ee B^n$. The continuity of $O_H$ on ${\mathcal{K}}^n$ follows.  The non-continuity of $S_H$, $F_H$ (for $i\in\{1,\dots,n-1\}$), and $I_H$ on ${\mathcal{K}}^n$ can be verified by considering a sequence of origin-symmetric $(n-i)$-dimensional balls contained in $(n-i)$-dimensional subspaces whose intersection with $H^\perp$ has dimension less than $n-i$ and which converge to an $(n-i)$-dimensional ball contained in $H^\perp$.)

Table~\ref{newtable} summarizes this information.

\begin{table}
\begin{center}
\begin{tabular}{|c|c l|c|c|c|c|c|c|c|c|c|c|} \hline
 & & &\rotatebox[origin=c]{90}{Monotonic} & \rotatebox[origin=c]{90}{$V_j$-preserving} & \rotatebox[origin=c]{90}{Idempotent} & \rotatebox[origin=c]{90} {Inv. $H$-sym. sets} & \rotatebox[origin=c]{90}{Inv.~$H$-sym.~sph.~cyl.} & \rotatebox[origin=c]{90}{Projection inv.} & \rotatebox[origin=c]{90}{\ Inv. translations\ } & \rotatebox[origin=c]{90}{Rotational symm.} & \rotatebox[origin=c]{90}{Continuous on ${\mathcal{K}}^n_n$}
 & \rotatebox[origin=c]{90}{Continuous on ${\mathcal{K}}^n$}\\
Name & Symbol & &1&2&3&4&5&6&7&8&9&9 \\ \hline
Steiner &$S_H$, & $i=n-1$ & s\checkmark & $V_n$ & \checkmark & \checkmark & \checkmark & \checkmark & \checkmark & \checkmark & \checkmark& \small{\ding{53}}\\ \hline
Schwarz & $S_H$, & $1\le i\le n-2$ & \checkmark & $V_n$ & \checkmark & \small{\ding{53}} & \checkmark & \checkmark & \checkmark & \checkmark & \checkmark& \small{\ding{53}}\\ \hline
Minkowski & $M_H$ &  & s\checkmark & $V_1$ & \checkmark & \checkmark & \checkmark & \checkmark & \checkmark & \small{\ding{53}} &\checkmark& \checkmark\\ \hline
Minkowski-Blaschke & $\overline{M}_H$, & $1\leq i\leq n-2$ & s\checkmark & $V_1$ & \checkmark & \small{\ding{53}} & \checkmark & \checkmark & \checkmark & \checkmark & \checkmark& \checkmark\\ \hline
Fiber & $F_{H}$, & $1\leq i\leq n-2$ & s\checkmark & \small{\ding{53}} & \checkmark & \checkmark & \checkmark & \checkmark & \checkmark & \small{\ding{53}} & \checkmark& \small{\ding{53}}\\ \hline
Inner & $I_H$ & $1\leq i\leq n-2$ & \checkmark & \small{\ding{53}} & \checkmark & \small{\ding{53}} & \checkmark & \checkmark & \checkmark & \checkmark & \checkmark& \small{\ding{53}}\\ \hline
Outer &$O_H$ & $1\leq i\leq n-2$  & s\checkmark & \small{\ding{53}} & \checkmark & \small{\ding{53}} & \checkmark & \checkmark & \checkmark & \checkmark & \checkmark& \checkmark\\ \hline
\end{tabular}
\end{center}
\vspace{.2in}
\caption{Properties, numbered as in Section~\ref{symm}, of the main symmetrizations,  where s\checkmark indicates strictly monotonic.}
\label{newtable}
\end{table}

Let ${\mathcal{B}}$ be a class of compact sets.  We can fix $i\in \{1,\dots,n-1\}$ but regard $\di$ as an entire collection of $i$-symmetrizations ${\di_H}:{\mathcal{B}}\rightarrow{\mathcal{B}}_H$, $H\in {\mathcal{G}}(n,i)$, and refer to $\di$ as an {\em $i$-symmetrization process}.  The definition of $\di_H$ may depend on $H$, so that, for example, $\di_H$ may be Steiner symmetrization for some $H$ and Minkowski symmetrization for others.  However, when we speak of familiar symmetrization processes, such as Steiner or Minkowski symmetrization, we assume that the type of symmetrization is the same for all $H$. We focus on the convergence of successive applications of $\di$ through a sequence of $i$-dimensional subspaces.  We shall use and extend ideas of Coupier and Davydov \cite{CouD14}, who consider only the case $i=n-1$, and adopt some of their notation in modified form.

Let $i\in \{1,\dots,n-1\}$ and suppose that $\di$ is an $i$-symmetrization process. Let $(H_m)$ be a sequence in ${\mathcal{G}}(n,i)$ and for convenience write $\di_m=\di_{H_m}$ for $m\in\N$. If $1\le l\le m$, let
\begin{equation}\label{dec121}
\di_{l,m}K=\di_{m}(\di_{{m-1}}(\cdots(\di_lK)\cdots))
\end{equation}
for each $K\in {\mathcal{B}}$, so that $\di_{l,m}K$ results from $m-l+1$ successive $\di$-symmetrizations applied to $K$ with respect to $H_l,H_{l+1},\dots,H_m$.

A sequence $(H_m)$ in ${\mathcal{G}}(n,i)$ is called {\em weakly $\di$-universal for ${\mathcal{B}}$} if for all $K\in {\mathcal{B}}$ and $l\in \N$, there exists $r(l,K)>0$ such that $\di_{l,m}K\to r(l,K)B^n$ as $m\to \infty$.  Note that this implies in particular that the successive symmetrals $\di_{1,m}K$ converge to a ball as $m\to \infty$.  If the constant $r(l,K)$ is independent of $l$, we say that $(H_m)$ is {\em $\di$-universal for ${\mathcal{B}}$}.  When ${\mathcal{B}}={\mathcal{K}}_n^n$, we shall use the shorter terms {\em weakly $\di$-universal} and {\em $\di$-universal} instead.  Example~\ref{weakdi} below exhibits a symmetrization process $\di$ and a sequence $(H_m)$ that is weakly $\di$-universal but not $\di$-universal.

When $i=n-1$, we shall use the terms in the previous paragraph also for the sequences $(u_m)$ of directions in $S^{n-1}$ such that $H_m=u_m^{\perp}$ for each $m$.  In fact, by \cite[Theorem~3.1]{CouD14}, a sequence $(u_m)$ in $S^{n-1}$ is Steiner-universal if and only if it is Minkowski-universal.   Since Steiner and Minkowski symmetrization preserve volume and mean width, respectively, it is easy to see that $(u_m)$ is weakly Steiner-universal (or weakly Minkowski-universal) if and only if it is Steiner-universal (or Minkowski-universal, respectively).  Much is known about such sequences; in particular, \cite[Proposition~3.3]{CouD14} implies that a sequence with each of these four equivalent properties can be drawn from any dense set in $S^{n-1}$.  See Section~\ref{compact} for further references.

\section{Extensions and variants of Klain's theorem}\label{Klainextension}

The goal of this section is to establish some extensions and variants of Klain's theorem \cite[Theorem~5.1]{Kla12} for Steiner symmetrization, the special case of Theorem~\ref{4} below corresponding to $i=n-1$.

Let ${\mathcal{B}}$ be a class of compact sets closed under intersections with $o$-symmetric balls and let $f:{\mathcal{B}}\to \R$ be a set function for which
\begin{equation}\label{layer}
\Omega_f(K)=\int_0^{\infty}f(K\cap rB^n)e^{-r^2}\,dr
\end{equation}
exists for each $K\in {\mathcal{B}}$.  We call $\Omega_f$ the {\em $f$-layering function} on ${\mathcal{B}}$. When ${\mathcal{B}}={\mathcal{K}}^n_n$ and $f=V_n$, we write $\Omega_{V_n}=\Omega$ for the usual layering function (see, for example, \cite[p.~344]{Kla12}).  Klain \cite[Theorem~3.1]{Kla12} proved the following result for Steiner symmetrization.

\begin{lem}\label{44}
Let ${\mathcal{B}}$ be a class of compact sets closed under intersections with $o$-symmetric balls and let $f:{\mathcal{B}}\to \R$ be an increasing set function.  Let $H\in {\mathcal{G}}(n,i)$, $i\in\{0,\dots,n-1\}$, and suppose that ${\di_H}:{\mathcal{B}}\rightarrow{\mathcal{B}}_H$ is monotonic, invariant on $o$-symmetric balls, and does not decrease $f$.  If $K\in {\mathcal{B}}$, then
\begin{equation}\label{omega}
\Omega_f(K)\le \Omega_f(\di_HK).
\end{equation}
If $f$ is strictly increasing on ${\mathcal{B}}$, then equality in \eqref{omega} implies that $\di_H(K\cap rB^n)=(\di_HK)\cap rB^n$ for each $r>0$.
\end{lem}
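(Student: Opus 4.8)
The plan is to mimic Klain's argument for Steiner symmetrization, reducing everything to a pointwise comparison of the ball-slices $K\cap rB^n$. First I would observe that for each $r>0$ the inclusion $K\cap rB^n\subseteq K$ and the monotonicity of $\di_H$ give $\di_H(K\cap rB^n)\subseteq\di_HK$, while $K\cap rB^n\subseteq rB^n$ together with monotonicity and invariance on $o$-symmetric balls give $\di_H(K\cap rB^n)\subseteq\di_H(rB^n)=rB^n$; hence
\[\di_H(K\cap rB^n)\subseteq(\di_HK)\cap rB^n\qquad(r>0).\]
Since $\di_H$ does not decrease $f$ and $f$ is increasing, this inclusion gives
\[f(K\cap rB^n)\le f\big(\di_H(K\cap rB^n)\big)\le f\big((\di_HK)\cap rB^n\big)\qquad(r>0).\]
Multiplying the outer inequality by $e^{-r^2}$ and integrating over $(0,\infty)$ — legitimate because $\di_HK\in{\mathcal B}$, so $\Omega_f(\di_HK)$ exists — I would obtain $\Omega_f(K)\le\Omega_f(\di_HK)$, which is \eqref{omega}.

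For the equality case, suppose $f$ is strictly increasing and $\Omega_f(K)=\Omega_f(\di_HK)$. Then
\[\int_0^\infty\Big(f\big((\di_HK)\cap rB^n\big)-f(K\cap rB^n)\Big)e^{-r^2}\,dr=0,\]
with nonnegative integrand by the second displayed inequality above, so the integrand vanishes for almost every $r>0$. For each such $r$ both inequalities in that chain are equalities; in particular $f(\di_H(K\cap rB^n))=f((\di_HK)\cap rB^n)$, and since $\di_H(K\cap rB^n)\subseteq(\di_HK)\cap rB^n$ and $f$ is strictly increasing on ${\mathcal B}$, this forces $\di_H(K\cap rB^n)=(\di_HK)\cap rB^n$ for almost every $r>0$.

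The step I expect to be the main obstacle is upgrading this from almost every $r$ to every $r$. Fix $r_0>0$ and choose radii $r_j\uparrow r_0$ at which the slice identity holds. Since the inclusion $\di_H(K\cap r_0B^n)\subseteq(\di_HK)\cap r_0B^n$ is already known, it suffices to prove the reverse. If $x\in\di_HK$ with $\|x\|<r_0$, pick $j$ with $\|x\|<r_j$; then $x\in(\di_HK)\cap r_jB^n=\di_H(K\cap r_jB^n)\subseteq\di_H(K\cap r_0B^n)$ by monotonicity, since $K\cap r_jB^n\subseteq K\cap r_0B^n$. The remaining case $\|x\|=r_0$ is the delicate one: one wants $x$ to be a limit of points of $\di_HK$ of norm strictly less than $r_0$, for then those points lie in $\di_H(K\cap r_0B^n)$ by the previous case and hence so does $x$, as $\di_H(K\cap r_0B^n)$ is closed. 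Equivalently, $\di_H(K\cap r_0B^n)$ is sandwiched between $\bigcup_j\di_H(K\cap r_jB^n)=(\di_HK)\cap\inte(r_0B^n)$ and $(\di_HK)\cap r_0B^n$, and one needs the closure of the former to equal the latter. This is immediate when the members of ${\mathcal B}$ are convex — the segment from an interior-of-ball point of $\di_HK$ to $x$ stays inside $\inte(r_0B^n)$ except at $x$, and if $\di_HK$ meets the ball only on its boundary sphere then convexity forces that intersection to be a single point, which is then $\{x\}$ and equals the nonempty set $\di_H(K\cap r_0B^n)$ — so the applications are covered; in full generality this is precisely where some regularity of $\di_HK$ near $r_0S^{n-1}$ enters, and I would expect the bulk of the technical work to be concentrated there.
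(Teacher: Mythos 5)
Your derivation of \eqref{omega}, and of the ``almost every $r$'' half of the equality case, is exactly the paper's argument: the inclusion $\di_H(K\cap rB^n)\subset(\di_HK)\cap rB^n$ from monotonicity and invariance on $o$-symmetric balls, the chain $f(K\cap rB^n)\le f(\di_H(K\cap rB^n))\le f((\di_HK)\cap rB^n)$, and integration of the nonnegative difference against $e^{-r^2}$. The genuine gap is in the passage from almost every $r$ to every $r$, and you have correctly located it yourself. You try to upgrade the \emph{set} identity $\di_H(K\cap rB^n)=(\di_HK)\cap rB^n$ by approximating $r_0$ from below through good radii, which reaches only the points of $(\di_HK)\cap\inte(r_0B^n)$; your treatment of points with $\|x\|=r_0$ then appeals to convexity of $\di_HK$. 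But the lemma is stated for an arbitrary class ${\mathcal{B}}$ of nonempty compact sets closed under intersections with $o$-symmetric balls, with no convexity hypothesis, and for a general compact $\di_HK$ a point of $(\di_HK)\cap r_0S^{n-1}$ need not be a limit of points of $(\di_HK)\cap\inte(r_0B^n)$. So as written the proposal does not prove the statement in its stated generality, and the ``regularity of $\di_HK$ near $r_0S^{n-1}$'' you anticipate needing is not available from the hypotheses.

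The missing idea is to perform the upgrade one level earlier, on scalars rather than on sets. Put $g(r)=f(K\cap rB^n)$ and $h(r)=f((\di_HK)\cap rB^n)$. Both are increasing functions of $r$ because $f$ is an increasing set function; $g\le h$ everywhere by your displayed chain; and $g=h$ almost everywhere by the vanishing-integrand argument. The paper deduces from the monotonicity of $f$ that $g(r)=h(r)$ for \emph{all} $r>0$, not just almost all. Once $g(r_0)=h(r_0)$ is known at a fixed $r_0$, the chain collapses there to give $f(\di_H(K\cap r_0B^n))=f((\di_HK)\cap r_0B^n)$, and strict monotonicity of $f$ applied to the already-established inclusion $\di_H(K\cap r_0B^n)\subset(\di_HK)\cap r_0B^n$ yields the set identity at $r_0$ directly. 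No information about the geometry of $\di_HK$ near the sphere is used at any point; the only residual care concerns the behaviour in $r$ of the two increasing functions $g$ and $h$ at an exceptional radius, a one-variable matter which is immediate whenever $f$ is continuous along monotone sequences of sets (as $V_n$ is, and $f=V_n$ is the only case the paper subsequently uses). Replacing your final paragraph by this reduction closes the gap and recovers the paper's proof.
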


\begin{proof}
Since $\di_H$ is monotonic and invariant on $o$-symmetric balls, we have
\begin{equation}\label{om1}
\di_H(K\cap rB^n)\subset (\di_HK)\cap rB^n
\end{equation}
for all $r>0$.   Since $\di_H$ does not decrease $f$ and $f$ is increasing, it follows that
\begin{equation}\label{om2}
f(K\cap rB^n)\le f(\di_H(K\cap rB^n))\le f((\di_HK)\cap rB^n)
\end{equation}
for $r>0$ and (\ref{omega}) is then a consequence of the definition (\ref{layer}) of $\Omega_f$.

Suppose that $f$ is strictly increasing on ${\mathcal{B}}$, $K\in {\mathcal{B}}$, and equality holds in (\ref{omega}).  Then, in view of (\ref{om2}), we clearly have $f(K\cap rB^n)=f((\di_HK)\cap rB^n)$ for almost all $r>0$ and hence, since $f$ is increasing, for all $r>0$.  This implies that (\ref{om2}) holds with equality. Consequently, since $f$ is strictly increasing on ${\mathcal{B}}$, we deduce from (\ref{om1}) that $\di_H(K\cap rB^n)=(\di_HK)\cap rB^n$ for each $r>0$.
\end{proof}

\begin{ex}\label{44exa}
{\rm  The equality condition in Lemma~\ref{44} is not enough for our purposes:  We would like to have equality in (\ref{omega}) if and only if $\di_HK=K$.  However, this is not true in general, even when ${\mathcal{B}}={\mathcal{K}}^n_n$.  To see this, let $H\in {\mathcal{G}}(n,i)$, $i\in\{1,\dots,n-1\}$, and define $\di_H$ as follows. Choose $u\in S^{n-1}\cap H$ and let $R=\{tu: t\ge 0\}$. If $K\in {\mathcal{K}}^n$, let $C_r(K)$ be the spherical cap of $rS^{n-1}$ with center $ru$ and the same $(n-1)$-dimensional Hausdorff measure as $K\cap rS^{n-1}$, and let $A_HK=\cup\{C_r(K): r\ge 0\}$.  This process is sometimes called spherical cap symmetrization.  Then define $\di_HK=\conv A_HK$.

If $K\subset L$, we have $A_HK\subset A_HL$ and hence $\di_HK\subset \di_HL$, so $\di_H$ is monotonic.  It is clearly invariant on $o$-symmetric balls.  The construction of $A_HK$ ensures that $V_n(A_HK)=V_n(K)$, so $V_n(K)\le V_n(\di_HK)$ and $\di_H$ does not decrease $V_n$.  Thus $\di_H$ satisfies the hypotheses of Lemma~\ref{44} with ${\mathcal{B}}={\mathcal{K}}^n_n$ and $f=V_n$.

Now let $x\not\in H$ and let $K=B^n+x$. Then $\di_HK$ is an $H$-symmetric translate of $B^n$, so $\di_HK\neq K$.  On the other hand, $V_n((\di_HK)\cap rB^n)=V_n(K\cap rB^n)$ for each $r\ge 0$, and hence $\Omega(\di_HK)=\Omega(K)$.\qed
}
\end{ex}

Note that the $i$-symmetrization $\di_H$ from Example~\ref{44exa} is also idempotent and continuous, but it is neither invariant on $H$-symmetric cylinders nor invariant under translations orthogonal to $H$ of $H$-symmetric sets.  Since the symmetral of a ball is an $H$-symmetric ball, it is easy to see that Klain's theorem \cite[Theorem~5.1]{Kla12} is not true for the corresponding $i$-symmetrization process $\di$.

Klain \cite[Theorem~3.1]{Kla12} proved the following result for Steiner symmetrization, which corresponds to the special case when $i=n-1$.

\begin{lem}\label{44a}
Let $i\in\{1,\dots,n-1\}$.  When $\di_H=F_H$, fiber symmetrization with respect to $H$, equality holds in \eqref{omega} with ${\mathcal{B}}={\mathcal{K}}^n_n$ and $f=V_n$ if and only if $F_HK=K$.  Hence the corresponding conclusion also holds when $\di_H$ is monotonic, invariant on $H$-symmetric sets, and invariant under translations orthogonal to $H$ of $H$-symmetric sets.  In particular, it holds when $\di_H=M_H$, Minkowski symmetrization with respect to $H$.
\end{lem}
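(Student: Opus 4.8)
The plan is to prove the fiber symmetrization statement directly and then deduce the rest from it. The implication ``$F_HK=K\Rightarrow$ equality in \eqref{omega}'' is trivial, so all the work is in the converse. First I would record the structural facts: writing $K\cap(H^\perp+x)=x+B_x$ with $x\in H$ and $B_x\subset H^\perp$, the fiber of $F_HK$ over $x$ is exactly $x+\frac12(B_x-B_x)$; hence $F_HK$ is always $H$-symmetric, and $F_HK=K$ holds precisely when $B_x=-B_x$ for every $x$, i.e.\ when $R_HK=K$. Since $F_H$ is monotonic, invariant on $o$-symmetric balls, and does not decrease $V_n$ (Fubini together with the Brunn--Minkowski inequality $V_{n-i}(\frac12(B_x-B_x))\ge V_{n-i}(B_x)$ in each fiber), and since $V_n$ is strictly increasing on ${\mathcal K}^n_n$, Lemma~\ref{44} gives that equality in \eqref{omega} implies
$$
F_H(K\cap rB^n)=(F_HK)\cap rB^n\qquad\text{for every }r>0 .
$$

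The key observation is that for $x\in H$ the set $rB^n\cap(H^\perp+x)$ is the $(n-i)$-ball $x+\rho(B^n\cap H^\perp)$ with $\rho=\sqrt{r^2-\|x\|^2}$, \emph{centered at the point $x$ of $H$}, because $x\perp H^\perp$. Restricting the displayed identity to the fiber over $x$ therefore collapses, with $D=B_x$, $B=B^n\cap H^\perp$ and $C=\frac12(D-D)$, to the set identity $\frac12\bigl((D\cap\rho B)-(D\cap\rho B)\bigr)=C\cap\rho B$ for all $\rho>0$. The inclusion ``$\subseteq$'' is automatic. For the reverse I would fix $w\in D-D$ and apply the identity at $\rho=\|w\|/2$: then $\frac12 w\in C\cap\rho B$ can be written $\frac12(y_1-y_2)$ with $y_1,y_2\in D\cap\rho B$, and $\|w\|=\|y_1-y_2\|\le\|y_1\|+\|y_2\|\le 2\rho=\|w\|$ forces $y_1=w/2=-y_2\in D$. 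Thus $\frac12(D-D)=C\subseteq D$ (so in particular $o\in D$), and since Brunn--Minkowski gives $V(\frac12(D-D))\ge V(D)$ this containment forces equality there; then $D$ is centrally symmetric, so $\frac12(D-D)$ is a translate of $D$ contained in $D$, which for bounded $D$ forces $D=-D$. Hence $B_x=-B_x$ for every $x$ with $K\cap(H^\perp+x)\neq\emptyset$, so $R_HK=K$, and invariance on $H$-symmetric sets yields $F_HK=K$.

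For the remaining assertions, let $\di_H$ be monotonic, invariant on $H$-symmetric sets, and invariant under translations orthogonal to $H$ of $H$-symmetric sets. By an argument dual to the one in the proof of Lemma~\ref{1}, using the characterization of the fiber symmetral from \cite{BGG} as the union of all $H$-symmetric $L\in{\mathcal K}^n$ some translate of which orthogonal to $H$ is contained in $K$: for each such $L$ one has $L=\di_HL=\di_H(L+y)\subseteq\di_HK$, so $F_HK\subseteq\di_HK$; in particular $V_n(\di_HK)\ge V_n(F_HK)\ge V_n(K)$, so Lemma~\ref{44} applies to $\di_H$ as well. If equality holds in \eqref{omega} for $\di_H$, then $\Omega(K)\le\Omega(F_HK)\le\Omega(\di_HK)=\Omega(K)$ (the first inequality by Lemma~\ref{44}, the second because $F_HK\subseteq\di_HK$), so equality also holds for $F_H$; by the case already proved, $F_HK=K$, hence $R_HK=K$, hence $\di_HK=K$. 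The converse is trivial. Finally, $M_H$ is monotonic, invariant on $H$-symmetric sets (if $R_HK=K$ then $M_HK=\frac12K+\frac12K=K$ by \eqref{april31}), and invariant under translations orthogonal to $H$ of $H$-symmetric sets (a one-line computation using $R_Hy=-y$ for $y\in H^\perp$), so the claim for Minkowski symmetrization follows.

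I expect the crux to be the fiberwise reduction in the second paragraph: spotting that $rB^n$ meets each fiber $H^\perp+x$ in a ball centered at a point of $H$ (which is exactly what lets the layering function control the fibers), extracting $\frac12(D-D)\subseteq D$ from the one-parameter family of identities, and closing with the equality case of Brunn--Minkowski. The bookkeeping for degenerate fibers (points $x\in\partial(K|H)$ where $B_x$ is lower-dimensional, or $x\notin K$ so that $o\notin B_x$) needs mild care, but each such fiber is handled by running the same argument inside $\aff B_x$, which is forced to contain $o$.
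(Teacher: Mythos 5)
Your proof is correct, and the reduction of the general case to fiber symmetrization (via $F_HK\subseteq\di_HK$ and the sandwich $\Omega(K)\le\Omega(F_HK)\le\Omega(\di_HK)$) coincides with the paper's; but your treatment of the core fiber-symmetrization claim is genuinely different. The paper argues by contraposition: if $F_HK\ne K$, then for suitable $r$ some fiber $K\cap rB^n\cap(H^\perp+x)$ fails to be centrally symmetric, the Brunn--Minkowski inequality is strict there, and Fubini plus \eqref{layer} give $\Omega(F_HK)>\Omega(K)$; only the fiberwise inequality and its equality case are used, and only at radii where nothing degenerates. You instead argue directly from the full one-parameter family of identities $F_H(K\cap rB^n)=(F_HK)\cap rB^n$ supplied by the equality clause of Lemma~\ref{44}, restrict each to a fiber, extract $\frac12(D-D)\subseteq D$ by the triangle-inequality trick at $\rho=\|w\|/2$, and close with the Brunn--Minkowski equality case; this is more constructive and actually yields the stronger pointwise containment. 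The price is that you need the identity at small radii, where $K\cap rB^n$ and $(F_HK)\cap rB^n$ may be empty or lower-dimensional and $V_n$ is no longer strictly increasing, so the equality clause of Lemma~\ref{44} does not literally deliver it (the paper invokes the same identity, but its argument does not actually depend on it). This is closable: equality in \eqref{omega} gives $V_n(K\cap rB^n)=V_n((F_HK)\cap rB^n)$ for all $r$, and since $K|H\subseteq F_HK$ this forces $\mathrm{dist}(o,K)=\mathrm{dist}(o,K|H)$; hence every radius $r=\bigl(\|x\|^2+\|w\|^2/4\bigr)^{1/2}$ with $x\in K|H$ and $w\ne o$ that your argument uses satisfies $r>\mathrm{dist}(o,K)$, where the identity is a genuine consequence of equal positive volumes of nested convex bodies. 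With that observation, together with the lower-dimensional-fiber bookkeeping you already flag, your argument is complete.
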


\begin{proof}
Let $K\in {\mathcal{K}}^n$, let $r>0$, and let $x\in H$.  If equality holds in (\ref{omega}) with ${\mathcal{B}}={\mathcal{K}}^n_n$ and $f=V_n$, then by Lemma~\ref{44}, we have $F_H(K\cap rB^n)=(F_HK)\cap rB^n$.  Using the definition (\ref{fhjk}) of $F_H$ and the Brunn-Minkowski inequality in $H^{\perp}+x$, we obtain
\begin{eqnarray*}
\lefteqn{V_{n-i}((F_HK)\cap rB^n\cap(H^{\perp}+x))^{1/(n-i)}=
 V_{n-i}(F_H(K\cap rB^n)\cap(H^{\perp}+x))^{1/(n-i)}}\\
&=& V_{n-i}\left(\frac12(K\cap rB^n\cap (H^{\perp}+x))+\frac12R_H(K\cap rB^n\cap (H^{\perp}+x))\right)^{1/(n-i)}\\
&\ge &
\frac12 V_{n-i}(K\cap rB^n\cap (H^{\perp}+x))^{1/(n-i)}+
\frac12 V_{n-i}(R_H(K\cap rB^n\cap (H^{\perp}+x)))^{1/(n-i)}\\
&=& V_{n-i}(K\cap rB^n\cap (H^{\perp}+x))^{1/(n-i)}
\end{eqnarray*}
and hence
\begin{equation}\label{fibeq}
V_{n-i}((F_HK)\cap rB^n\cap(H^{\perp}+x))\ge V_{n-i}(K\cap rB^n\cap (H^{\perp}+x)).
\end{equation}
Equality holds if and only if $K\cap rB^n\cap (H^{\perp}+x)$ is homothetic to $R_H(K\cap rB^n\cap (H^{\perp}+x))$ and hence if and only if $K\cap rB^n\cap (H^{\perp}+x)$ is symmetric in $H^{\perp}+x$ with respect to $x$.

Suppose that $F_HK\neq K$.  Then, since $F_H$ is invariant on $H$-symmetric sets, $K$ is not $H$-symmetric.  It follows that there is an $r>0$ such that $K\cap rB^n$ is not $H$-symmetric and hence an $x\in H$ such that $K\cap rB^n\cap (H^{\perp}+x)$ is not symmetric in $H^{\perp}+x$ with respect to $x$.   From the previous paragraph, we conclude that strict inequality holds in (\ref{fibeq}).  By Fubini's theorem, we obtain $V_n((F_HK)\cap rB^n)>V_n(K\cap rB^n)$ and so, by continuity and (\ref{layer}), $\Omega(F_HK)>\Omega(K)$.  This proves the first claim in the lemma.

Suppose that $\di_H$ is monotonic, invariant on $H$-symmetric sets, and invariant under translations orthogonal to $H$ of $H$-symmetric sets. By \cite[Corollary~7.3]{BGG}, $F_HK\subset \di_HK$.  Therefore $\Omega(K)\le \Omega(F_HK)\le \Omega(\di_HK)$, by Lemma~\ref{44}.  If $\Omega(\di_HK)=\Omega(K)$, then $\Omega(F_HK)=\Omega(K)$.  Consequently, $F_HK=K$, implying that $K$ is $H$-symmetric and hence that $\di_HK=K$.
\end{proof}

\begin{lem}\label{44b}
Let $i\in\{1,\dots,n-2\}$.  When $\di_H=S_H$, Schwarz symmetrization with respect to $H$, equality holds in \eqref{omega} with ${\mathcal{B}}={\mathcal{K}}^n_n$ and $f=V_n$ if and only if $S_HK=K$.
\end{lem}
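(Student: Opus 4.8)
The plan is to follow the structure of the proof of Lemma~\ref{44a}, replacing the fiberwise Brunn--Minkowski step by a direct analysis of the $(n-i)$-dimensional sections orthogonal to $H$. Schwarz symmetrization is monotonic, invariant on $o$-symmetric balls, and volume preserving, so $S_H$ satisfies the hypotheses of Lemma~\ref{44} with ${\mathcal{B}}={\mathcal{K}}^n_n$ and $f=V_n$, and $V_n$ is strictly increasing on ${\mathcal{K}}^n_n$. Hence, if $S_HK=K$ then trivially $\Omega(S_HK)=\Omega(K)$, while if equality holds in \eqref{omega}, Lemma~\ref{44} yields
\[
S_H(K\cap rB^n)=(S_HK)\cap rB^n\qquad\text{for every }r>0 .
\]

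For the forward implication, I would fix $x\in\relint(K|H)$ and intersect this identity with the $(n-i)$-plane $G=H^{\perp}+x$. Since $K$ has interior points, the fiber $C:=K\cap G$ over the relative interior point $x$ of $K|H$ has nonempty interior relative to $G$, so $C$ is an $(n-i)$-dimensional convex body with $V_{n-i}(C)=\kappa_{n-i}\rho_x^{\,n-i}$ for some $\rho_x>0$, and by definition of Schwarz symmetrization $(S_HK)\cap G$ is the $(n-i)$-ball $B(x,\rho_x)\cap G$. For $r>\|x\|$ one has $rB^n\cap G=B(x,s_r)\cap G$ with $s_r=\sqrt{r^2-\|x\|^2}$, because $x\perp z$ for $z\in H^{\perp}$. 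Since Schwarz symmetrization acts plane by plane, $S_H(K\cap rB^n)\cap G$ is either empty or the $(n-i)$-ball with center $x$ and $(n-i)$-volume $V_{n-i}(C\cap B(x,s_r))$, whereas $(S_HK)\cap rB^n\cap G$ is the nonempty $(n-i)$-ball with center $x$ and radius $\min(\rho_x,s_r)$. Equating these two sets forces
\[
V_{n-i}\bigl(C\cap B(x,s_r)\bigr)=\kappa_{n-i}\min(\rho_x,s_r)^{n-i}
\]
for every $r>\|x\|$, hence for every $s_r\in(0,\infty)$.

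Reading this identity off in two regimes then pins down $C$. For $s_r\ge\rho_x$ the right-hand side equals $V_{n-i}(C)$, so $C\setminus B(x,s_r)$ has $(n-i)$-volume zero; since $C$ has nonempty interior, a point of $C$ at distance exceeding $s_r$ from $x$ would force a positive-volume subset of $C$ to lie outside $B(x,s_r)$, so $C\subseteq B(x,s_r)$, and letting $s_r\downarrow\rho_x$ gives $C\subseteq B(x,\rho_x)$. For $s_r\le\rho_x$ the right-hand side equals $V_{n-i}(B(x,s_r))$, so $B(x,s_r)\setminus C$ has volume zero, whence $B(x,s_r)\subseteq C$ because $C$ is closed with nonempty interior; letting $s_r\uparrow\rho_x$ gives $B(x,\rho_x)\subseteq C$. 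Therefore $K\cap(H^{\perp}+x)=C=B(x,\rho_x)\cap G=(S_HK)\cap(H^{\perp}+x)$ for every $x\in\relint(K|H)$. Since $(S_HK)|H=K|H$ by projection invariance, and since both $K$ and $S_HK$ equal the closure of the union of their fibers over $\relint(K|H)$, it follows that $K=S_HK$.

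The main obstacle is not a single hard estimate but bookkeeping: one must unwind the definition of Schwarz symmetrization to identify both planar sections of the symmetrized bodies as the concentric $(n-i)$-balls described above, and one must recover the identity $K=S_HK$ over the relative boundary of $K|H$, where fibers of $K$ may be lower dimensional, using the density of $\relint(K|H)$ in $K|H$ together with the compactness and convexity of $K$ and $S_HK$.
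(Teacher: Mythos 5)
Your proof is correct, but it takes a genuinely different route from the paper. The paper reduces the Schwarz case to the already-established Steiner case (Lemma~\ref{44a} with $i=n-1$): it picks a Steiner-universal sequence of hyperplanes $J_m=H+H_m$ containing $H$, uses the fact that the successive Steiner symmetrals $S_{J_m}\cdots S_{J_1}K$ converge to $S_HK$, squeezes the increasing sequence $\Omega(K)\le\Omega(K_1)\le\cdots\to\Omega(S_HK)=\Omega(K)$ to force equality at every step, and concludes $K=K_1=\cdots=S_HK$ from the Steiner equality condition. You instead work directly from the conclusion $S_H(K\cap rB^n)=(S_HK)\cap rB^n$ of Lemma~\ref{44}, unwind the definition of $S_H$ on each fiber $G=H^{\perp}+x$ with $x\in\relint(K|H)$, and observe that varying $r$ recovers the full radial volume distribution $s\mapsto V_{n-i}(C\cap B(x,s))$ of the fiber $C$, which can match that of a concentric ball only if $C$ is that ball; the extension to boundary fibers via $\relint(K|H)=(\relint K)|H$ and $K=\cl\relint K$ is standard. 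Your argument is more self-contained: it avoids both the existence of Steiner-universal sequences and the (nontrivial, merely asserted in the paper) convergence of iterated Steiner symmetrals along hyperplanes containing $H$ to the Schwarz symmetral, at the cost of the fiberwise bookkeeping you describe. The paper's argument is shorter given that machinery and reuses Lemma~\ref{44a} verbatim. The one shared technicality worth noting is that $K\cap rB^n$ need not lie in ${\mathcal{K}}^n_n$ for small $r$, so the application of Lemma~\ref{44} with ${\mathcal{B}}={\mathcal{K}}^n_n$ implicitly restricts to $r$ with $V_n(K\cap rB^n)>0$; this affects the paper's proof equally and is harmless in your argument since you only use radii $r>\|x\|$ with $x\in\relint(K|H)$.
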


\begin{proof}
Let $K\in {\mathcal{K}}^n$ and let $H\in {\mathcal{G}}(n,i)$.  By identifying $H^{\perp}$ with $\R^{n-i}$, choose a Steiner-universal sequence $(H_m)$ such that $H_m\in {\mathcal{G}}(n,n-i-1)$ with $H_m\subset H^{\perp}$, $m\in \N$.  Let $J_m=H+H_m\in {\mathcal{G}}(n,n-1)$, $m\in \N$.  Then the Steiner symmetrals $S_{J_m}K$ converge to $S_HK$ as $m\to\infty$.

Suppose that equality holds in (\ref{omega}) with $f=V_n$, i.e., $\Omega(S_HK)=\Omega(K)$. Let $K_m= S_{J_m}\cdots S_{J_1}K$, $m\in \N$.  By Lemma~\ref{44}, applied successively with $K$ and $\di_H$ replaced by $K_k$ and $S_{J_{k+1}}$, and the continuity of $\Omega$, we obtain
$$\Omega(K)\le \Omega(K_1)\le \Omega(K_2)\le \cdots\le \Omega(K_m)\to \Omega(S_HK)=\Omega(K).$$
Since equality must hold throughout, successive use of Lemma~\ref{44a} with $i=n-1$ (in which case $F_H=S_H$) now yields
$$K=K_1=K_2=\cdots=K_m=\cdots=S_HK,$$
as required.
\end{proof}

\begin{lem}\label{25June}
Let $i\in\{1,\dots,n-2\}$.  When $\di_H=\overline{M}_HK$, equality holds in \eqref{omega} with ${\mathcal{B}}={\mathcal{K}}^n_n$ and $f=V_n$ if and only if $\overline{M}_HK=K$.
\end{lem}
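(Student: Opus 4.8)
The plan is to reduce this equality case to the one already established for Schwarz symmetrization in Lemma~\ref{44b}, using the containment $S_HK\subset\overline{M}_HK$ from Lemma~\ref{1}. The ``only if'' direction is trivial, since $\overline{M}_HK=K$ obviously gives $\Omega(\overline{M}_HK)=\Omega(K)$ (here and below $\Omega=\Omega_{V_n}$, and we take ${\mathcal{B}}={\mathcal{K}}^n_n$, $f=V_n$). For the converse I would start from the assumption $\Omega(K)=\Omega(\overline{M}_HK)$ and squeeze $\Omega(S_HK)$ between these two values.

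First I would observe that Schwarz symmetrization $S_H$ is monotonic, invariant on $o$-symmetric balls, and volume preserving, hence does not decrease $V_n$; so Lemma~\ref{44} with $f=V_n$ yields $\Omega(K)\le\Omega(S_HK)$. Next, Lemma~\ref{1} gives $S_HK\subset\overline{M}_HK$, so $(S_HK)\cap rB^n\subset(\overline{M}_HK)\cap rB^n$ for every $r>0$ and therefore $\Omega(S_HK)\le\Omega(\overline{M}_HK)$ directly from the definition \eqref{layer}. The two inequalities together with the hypothesis force
$$\Omega(K)\le\Omega(S_HK)\le\Omega(\overline{M}_HK)=\Omega(K),$$
so $\Omega(S_HK)=\Omega(K)$, i.e.\ equality in \eqref{omega} for Schwarz symmetrization. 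Lemma~\ref{44b} then gives $S_HK=K$.

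The last step is to deduce $\overline{M}_HK=K$ from $S_HK=K$. Since every section of $S_HK$ by a plane orthogonal to $H$ is, by construction, a ball centered on $H$, the body $K=S_HK$ is rotationally symmetric with respect to $H$; it is therefore one of the competitors in the intersection defining $O_HK$, whence $O_HK\subset K$. Combining this with Lemma~\ref{1} gives $K=S_HK\subset\overline{M}_HK\subset O_HK\subset K$, so $\overline{M}_HK=K$, as required. The one point to be careful about --- and the only real content beyond citing the earlier results --- is the very first inequality: although $S_H$ does \emph{not} preserve the layering function $\Omega$ (it preserves volume, not $\Omega$), it never decreases it, and this monotonicity of $\Omega$ under $S_H$, combined with the monotonicity of $\Omega$ under set inclusion applied to $S_HK\subset\overline{M}_HK$, is exactly what allows the squeeze that transfers the equality case from $S_H$ to $\overline{M}_H$.
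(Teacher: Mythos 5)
Your proof is correct and follows essentially the same route as the paper: squeeze $\Omega(S_HK)$ between $\Omega(K)$ and $\Omega(\overline{M}_HK)$ using Lemma~\ref{44} and the containment $S_HK\subset\overline{M}_HK$ from Lemma~\ref{1}, invoke Lemma~\ref{44b} to get $S_HK=K$, and then use the rotational symmetry of $K$ to conclude $\overline{M}_HK=K$. Your final step via $O_HK$ is a harmless variant of the paper's direct observation that $\overline{M}_H$ fixes sets that are rotationally symmetric with respect to $H$.
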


\begin{proof}
We have $\Omega(K)\le \Omega(S_HK)\le \Omega(\overline{M}_HK)$, by Lemma~\ref{44} (with $\di_H=S_H$ and $f=V_n$) and Lemma~\ref{1}.  If $\Omega(\overline{M}_HK)=\Omega(K)$, then $\Omega(S_HK)=\Omega(K)$.  Consequently, $S_HK=K$, by Lemma~\ref{44b}, implying that $K$ is rotationally symmetric with respect to $H$ and hence that $\overline{M}_HK=K$.
\end{proof}

The following result generalizes Klain's theorem \cite[Theorem~5.1]{Kla12} for Steiner symmetrization, which corresponds to the special case when $i=n-1$.  The proof uses ideas from \cite{BBGV}.

\begin{thm}\label{4}
Let $i\in\{1,\dots,n-1\}$ and let  $(H_m)$ be a sequence chosen from a finite set ${\mathcal{F}}=\{U_1,\dots,U_k\}\subset {\mathcal{G}}(n,i)$. Then, for every $K\in {\mathcal{K}}^n$, the successive fiber symmetrals
\begin{equation}\label{mseq}
K_m= F_{H_m}\cdots F_{H_1}K
\end{equation}
converge to a compact convex set $L$ as $m\to\infty$.  Furthermore, $L$ is $U_j$-symmetric for each $U_j\in {\mathcal{F}}$ that appears infinitely often in $(H_m)$.
\end{thm}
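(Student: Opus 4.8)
The plan is to follow the strategy from \cite{BBGV} for Steiner symmetrization, which rests on the layering function $\Omega=\Omega_{V_n}$ as a monotone quantity along the sequence. First I would record that each $F_{H_m}$ preserves volume and is monotonic and invariant on $o$-symmetric balls, so by Lemma~\ref{44} the sequence $\Omega(K_m)$ is nondecreasing; since all the $K_m$ lie in a fixed ball (volume is preserved and, by a diameter bound, the sets stay uniformly bounded), $\Omega(K_m)$ is bounded above, hence convergent. By Blaschke's selection theorem, any subsequence of $(K_m)$ has a sub-subsequence converging in the Hausdorff metric to some compact convex set; the real work is to show the limit is independent of the subsequence and has the claimed symmetry.

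\textbf{Identifying the limit.} Suppose $K_{m_j}\to L$. Fix $U\in\mathcal F$ occurring infinitely often in $(H_m)$. By passing to a further subsequence I may assume that each $m_j$ is an index at which $U$ is applied, i.e.\ $F_{H_{m_j}}=F_U$ and $K_{m_j}=F_U K_{m_j-1}$, and also (Blaschke again, diagonalizing) that $K_{m_j-1}\to L'$ for some compact convex $L'$. Continuity of $F_U$ on $\mathcal K^n_n$ gives $L=F_U L'$. Now $\Omega(L')=\lim\Omega(K_{m_j-1})=\lim\Omega(K_{m_j})=\Omega(L)=\Omega(F_U L')$, because $\Omega(K_m)$ converges and $\Omega$ is continuous under Hausdorff convergence. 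By Lemma~\ref{44a} applied to $F_U$ (fiber symmetrization preserves volume and is invariant on $H$-symmetric sets and under orthogonal translations of them), equality $\Omega(F_U L')=\Omega(L')$ forces $F_U L'=L'$, hence $L'$ is $U$-symmetric and $L=F_U L'=L'$ is $U$-symmetric. This shows every Hausdorff limit point of $(K_m)$ is symmetric with respect to every $U\in\mathcal F$ that appears infinitely often.

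\textbf{Uniqueness of the limit point.} This is the step I expect to be the main obstacle, and it is where the argument of \cite{BBGV} must be adapted. The idea is that containment relations are eventually stable: if $L_1$ and $L_2$ are two Hausdorff limit points, one shows that for each $\varepsilon>0$ there is an index $N$ such that $K_m\subset L_1+\varepsilon B^n$ for all $m\ge N$ — roughly because once $K_{m_j}$ is close to $L_1$, and since every subsequent $F_{H_m}$ is monotonic, idempotent, and invariant on sets symmetric in the relevant subspace, the symmetrals cannot escape a small neighborhood of the ``symmetrized hull'' determined by $L_1$ and $\mathcal F$. Combined with volume preservation (so no mass is lost in the limit), this pins down $L_1=L_2$. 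Concretely, I would use that $F_{H_m}(L_1+\varepsilon B^n)\subset F_{H_m}L_1+\varepsilon B^n$ and, iterating, that the nested symmetrized neighborhoods of $L_1$ shrink to $L_1$ itself precisely because $\Omega$ has stabilized; any persistent gap between two limit points would produce a strict increase of $\Omega$ somewhere along the tail, contradicting convergence of $\Omega(K_m)$. Once uniqueness is established, $(K_m)$ itself converges to the common value $L$, and the symmetry conclusion from the previous paragraph applies to $L$, completing the proof.
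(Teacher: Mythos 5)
Your overall architecture (uniform boundedness, Blaschke selection, monotonicity of $\Omega$ along the sequence, and the equality condition of Lemma~\ref{44a} to force symmetry of the limit) is the same as the paper's, but the step you present as routine is precisely where the argument breaks. In ``Identifying the limit'' you claim that, given a convergent subsequence $K_{m_j}\to L$ and a subspace $U$ occurring infinitely often, you may pass to a further subsequence so that each $m_j$ is an index at which $U$ is applied. That is false: a further subsequence must stay inside the index set $\{m_j\}$, which may be disjoint from $\{m: H_m=U\}$. You could instead extract a separate convergent subsequence along the indices where $U$ is applied, but a priori it converges to a different limit point, so this only shows that for each $U$ \emph{some} limit point is $U$-symmetric, not that a single limit point is symmetric with respect to all of them. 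This is exactly the difficulty that Klain's permutation construction (reproduced in the paper) is designed to overcome: one orders the subspaces so that, by induction on $j$, the limit $L$ of the chosen subsequence is already known to be symmetric with respect to $U_1,\dots,U_{j-1}$, and then the inequality $V_n(K\cap L)\le V_n((F_HK)\cap L)$ for $H$-symmetric $L$ (the paper's \eqref{thmom2}) shows that the intermediate symmetrizations occurring before the next appearance of $U_j$ cannot decrease $V_n(K_m\cap L)$; that is what transports the convergence $K_{m_p}\to L$ to the indices where $U_j$ is applied. Your uniqueness sketch has the same problem in disguise: the paper settles it with the same intersection-volume inequality once $L$ is known to be symmetric with respect to every $U_j$, whereas your proposed inclusion $F_{H}(L_1+\varepsilon B^n)\subset F_{H}L_1+\varepsilon B^n$ is unjustified for fiber symmetrization (monotonicity only gives $F_H(L_1+\varepsilon B^n)\subset M_H(L_1+\varepsilon B^n)=M_HL_1+\varepsilon B^n$, and $M_HL_1\neq F_HL_1$ in general).

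Two further points. Fiber symmetrization does \emph{not} preserve volume when $i<n-1$ (see Table~\ref{newtable}); it merely does not decrease it, which is all Lemma~\ref{44} requires, and the uniform boundedness of the $K_m$ follows from monotonicity together with invariance on $o$-symmetric balls, not from any volume argument. Finally, the theorem is stated for all $K\in{\mathcal{K}}^n$, and your argument, which repeatedly invokes $K_m\in{\mathcal{K}}^n_n$ and positivity of volume, does not cover $\dim K<n$; the paper disposes of that case separately by showing that the affine hulls of the $K_m$ stabilize and then running the argument inside the resulting subspace.
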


\begin{proof}
A few preliminary observations will be useful.
Let $H\in {\mathcal{G}}(n,i)$ and let $K,L\in {\mathcal{K}}^n$, where $L$ is $H$-symmetric. Note that $F_H$ does not decrease volume.  Indeed, the definition (\ref{fhjk}) of $F_H$ shows that it is equivalent to Minkowski symmetrization in each $(n-i)$-dimensional plane orthogonal to $H$.  It follows that $$V_{n-i}(K\cap (H^{\perp}+x))\le V_{n-i}((F_HK)\cap (H^{\perp}+x))$$
for each $x\in H$, since Minkowski symmetrization preserves $V_1$ and does not decrease $V_j$ for $j>1$ (see \cite[p.~58]{BGG}).  Then Fubini's theorem yields
\begin{equation}\label{thmom1}
V_n(K)\leq V_n(F_HK).
\end{equation}
Furthermore
\begin{equation}\label{thmom2}
V_n(K\cap L)\leq V_n((F_HK)\cap L)
\end{equation}
holds because
$$F_H(K\cap L)\subset (F_HK)\cap F_HL=(F_HK)\cap L,$$
since $F_H$ is monotonic and invariant on $H$-symmetric sets. Hence, by (\ref{thmom1}) with $K$ replaced by $K\cap L$,
$$V_n(K\cap L)\le V_n(F_H(K\cap L))\le V_n((F_HK)\cap L).$$

We follow Klain's argument. Dropping an initial segment $(K_m)_{m\le N}$ of the sequence $(K_m)$ defined by (\ref{mseq}) and possibly replacing ${\mathcal{F}}$ by one of its subsets, we may  assume, without loss of generality,  that each subspace in ${\mathcal{F}}$ appears infinitely often in the sequence $(H_m)$. The main idea is to construct a subsequence along which the subspaces $U_j\in {\mathcal{F}}$ appear in a particular order. With each index $m$, we associate a permutation $\pi_m$ of $\{1,\dots, k\}$ that indicates the order in which the subspaces $U_1,\dots,U_k$ appear for the first time among those $U_j$ with $j\geq m$.  Since there are only finitely many permutations, we can pick a subsequence $(H_{m_p})$ such that the permutation $\pi_{m_p}$ is the same for each $p$. By relabeling the subspaces, we may assume that this permutation is the identity.  Passing to a further subsequence, we may assume that every subspace in ${\mathcal{F}}$ appears in each segment $H_{m_p}, H_{m_p+1},\dots, H_{m_{p+1}-1}$.

By Blaschke's selection theorem, there is a subsequence (again denoted by $(K_{m_p})$) that converges in the Hausdorff metric to some
$L\in {\mathcal{K}}^n$.  We note for later use that the entire sequence $\bigl(V_n(K_m)\bigr)$ is increasing, by (\ref{thmom1}), and hence convergent.
Moreover,
\begin{equation}\label{mono-0}
\lim_{m\to\infty} V_n(K_m)=\lim_{p\to\infty}
V_n(K_{m_p}) =V_n(L).
\end{equation}

Now assume that $K\in {\mathcal{K}}^n_n$.

We show by induction that $L$ is $U_j$-symmetric for $j=1,\dots,k$. For $j=1$, observe that $H_{m_p}=U_1$ for each $p$. Therefore $K_{m_p}$ is $U_1$-symmetric for each $p$ and the same is true for $L$.  Suppose that $L$ is $U_r$-symmetric for $r=1,\dots,j-1$. Let $m'_p$ be the index where $U_j$ appears for the first time after $H_{m_p}$.  Then for $m_p+1\le m\le m'_p-1$, $H_m=U_r$ for some $r=1,\dots,j-1$, so we can apply the inductive hypothesis and (\ref{thmom2}), successively with $H$ equal to one of the latter subspaces, to obtain
\begin{equation}\label{eq:Klain-1}
V_n(K_{m_p}\cap L)\le V_n((F_{H_{m'_p-1}}\cdots F_{H_{m_p+1}}K_{m_p})\cap L)=V_n(K_{m'_p-1}\cap L).
\end{equation}
Since $(K_{m_p})$ converges to $L$, (\ref{mono-0}) implies that the left-hand side of (\ref{eq:Klain-1}) converges to $V_n(L)$. Therefore the right-hand side converges to $V_n(L)$, which implies that
$$V_n(K_{m'_p-1}\Delta L)=V_n(K_{m'_p-1})+V_n(L)-2V_n(K_{m'_p-1}\cap L)$$
converges to zero.  Then, because $K_{m'_p-1}\in {\mathcal{K}}^n_n$, it follows that $K_{m'_p-1}$ converges to $L$ as $p\to\infty$.  Now we use this, Lemma~\ref{44} (with ${\mathcal{B}}={\mathcal{K}}^n$, $f=V_n$, and $\di_H=F_H$), the continuity of fiber symmetrization, and the continuity of the functional $\Omega$, to obtain
\begin{equation}\label{eq:mono2}
\Omega(L)\le\Omega(F_{U_j}L)=\lim_{p\to\infty} \Omega(F_{U_j}K_{m'_p-1})
=\lim_{p\to\infty} \Omega(K_{m'_p}).
\end{equation}
Since $\bigl(\Omega (K_m)\bigr)$ is an increasing sequence, by Lemma~\ref{44}, and since it contains the subsequence $\bigl(\Omega(K_{m_p})\bigr)$ which converges to $\Omega(L)$ because $\Omega$ is continuous, the first and last term in \eqref{eq:mono2} are equal, so equality holds throughout.  By Lemma~\ref{44a}, $F_{U_j}L=L$.  Therefore $L$ is $U_j$-symmetric and this concludes the inductive step.

It remains to prove that the entire sequence converges. Since $L$ is $U_j$-symmetric for $j=1,\dots,k$, we have, by the same reasoning as in \eqref{eq:Klain-1} and the lines following it, that
$$V_n(K_{m_p}\cap L)\le V_n(K_m\cap L),$$
for each $m\geq m_p$; moreover, since $V_n(K_{m_p}\cap L)\to V_n(L)$ as $p\to\infty$, we also have $V_n(K_m\cap L)\to V_n(L)$ as $m\to\infty$.  Since $K_m\in {\mathcal{K}}^n_n$, this implies that $K_m$ converges to $L$ as $m\to\infty$.  This completes the proof when $K\in {\mathcal{K}}^n_n$.

Suppose that $\dim K<n$.  Fiber symmetrization is invariant under translations, so we may assume that $o\in K$.  Then $o\in K_m$ for all the successive symmetrals $K_m$ of $K$.  Since fiber symmetrization with respect to a subspace $H$ corresponds to Minkowski symmetrization in each plane $H^{\perp}+x$, $x\in H$, it follows that $\aff K_m\subset \aff K_{m+1}$ for all $m$.  Then there is an $M\in \N$ such that $\aff K_m=\aff K_M$ for all $m\ge M$.  Consequently, we may as well assume, by replacing $K$ by $K_M$ if necessary, that each $K_m$ is contained in a subspace $S$ and $\dim K_m=\dim S=k$, say.  The cases $k=0$ and $k=1$ are trivial, and if $k\ge 2$, the previous argument can be repeated, with $n$ replaced by $k$, by identifying $S$ and $\R^k$.
\end{proof}

\begin{thm}\label{corthm4}
Let $i\in \{1,\dots,n-1\}$ and let $\di$ be an $i$-symmetrization process on ${\mathcal{K}}^n_n$.  Suppose that for each $H\in {\mathcal{G}}(n,i)$,
$\di_H$ is monotonic, invariant on $H$-symmetric sets, invariant under translations orthogonal to $H$ of $H$-symmetric sets, and continuous.  Then Klain's theorem \cite[Theorem~5.1]{Kla12} holds for $\di$.  In particular, it holds for Minkowski symmetrization, and in this case, the result also applies to $K\in {\mathcal{K}}^n$.
\end{thm}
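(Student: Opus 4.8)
The plan is to mimic the proof of Theorem~\ref{4} almost verbatim, replacing each fiber symmetral $F_{H_m}$ by the symmetral $\di_{H_m}=\di_m$, once the three structural facts about $F_H$ used there have been transferred to a general $\di_H$ satisfying the hypotheses. The key input is \cite[Corollary~7.3]{BGG}, which under exactly these hypotheses gives $F_HK\subset \di_HK$ for all $K\in{\mathcal K}^n$ and $H\in{\mathcal G}(n,i)$. From this: (a)~$\di_H$ does not decrease volume, since $V_n(K)\le V_n(F_HK)\le V_n(\di_HK)$ by \eqref{thmom1}; and, because $F_H$ is monotonic and invariant on $o$-symmetric balls, $F_H(K\cap rB^n)\subset(F_HK)\cap rB^n\subset(\di_HK)\cap rB^n$, whence $V_n(K\cap rB^n)\le V_n((\di_HK)\cap rB^n)$ for all $r>0$ and so $\Omega(K)\le\Omega(\di_HK)$. (b)~If $L$ is $H$-symmetric, monotonicity and invariance on $H$-symmetric sets give $\di_H(K\cap L)\subset(\di_HK)\cap\di_HL=(\di_HK)\cap L$, so applying (a) to $K\cap L$ yields the analog of \eqref{thmom2}: $V_n(K\cap L)\le V_n((\di_HK)\cap L)$. (c)~Lemma~\ref{44a} gives the equality condition, that $\Omega(\di_HK)=\Omega(K)$ implies $\di_HK=K$.

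With (a)--(c) in hand, and using the assumed continuity of $\di_H$ in place of the continuity of $F_H$ invoked at \eqref{eq:mono2}, the argument of Theorem~\ref{4} runs unchanged for $K\in{\mathcal K}^n_n$: extract a subsequence along which the members of ${\mathcal F}$ first appear in a fixed order, pass to a Hausdorff-convergent sub-subsequence $K_{m_p}\to L\in{\mathcal K}^n_n$, prove by induction on $j$ that $L$ is $U_j$-symmetric via \eqref{eq:Klain-1}, \eqref{eq:mono2}, and the monotonicity and equality condition for $\Omega$, and finally conclude convergence of the entire sequence from $V_n(K_m\cap L)\to V_n(L)$. This establishes Klain's theorem for $\di$ on ${\mathcal K}^n_n$, and in particular for Minkowski symmetrization, which is monotonic, invariant on $H$-symmetric sets, invariant under translations orthogonal to $H$ of $H$-symmetric sets, and continuous (see Table~\ref{newtable}).

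To extend the Minkowski case to $K\in{\mathcal K}^n$ with $\dim K<n$, I would repeat the device from the end of the proof of Theorem~\ref{4}. Since $M_H\{o\}=\{o\}$, monotonicity gives $o\in M_HK$ whenever $o\in K$, so after a translation we may assume $o\in K_m$ for all $m$; then each $\aff K_m$ is a subspace, and $M_{H_m}K_m=\tfrac12 K_m+\tfrac12 R_{H_m}K_m$ forces $\aff K_{m+1}=\aff K_m+R_{H_m}(\aff K_m)\supseteq\aff K_m$. Hence the affine hulls increase and stabilize at some subspace $S$; for all large $m$ we have $K_m\subset S$ and $R_{H_m}S=S$, so $M_{H_m}$ restricts to Minkowski symmetrization of $S$ with respect to $H_m\cap S$. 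Identifying $S$ with $\R^k$ (the cases $k\le1$ being trivial) and noting that Minkowski symmetrization in every dimension --- including the central symmetrization, should $H_m\cap S=\{o\}$ --- has the four properties used above, the full-dimensional argument applies inside $S$, since it rests only on those properties and hence tolerates the subspaces $H_m\cap S$ having unequal dimensions. I expect this last reduction to be the only genuinely delicate point; everything else is a faithful transcription of the proof of Theorem~\ref{4}, with \cite[Corollary~7.3]{BGG} and Lemma~\ref{44a} supplying for a general $\di_H$ the facts established directly for $F_H$ there.
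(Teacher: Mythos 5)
Your proposal is correct and follows essentially the same route as the paper: it reduces everything to the proof of Theorem~\ref{4} by using \cite[Corollary~7.3]{BGG} to get $F_HK\subset\di_HK$ (hence the analogs of \eqref{thmom1} and \eqref{thmom2}), invokes continuity for \eqref{eq:mono2} and Lemma~\ref{44a} for the equality condition, and handles lower-dimensional $K$ in the Minkowski case via the stabilizing-affine-hull device from the last paragraph of that proof. Your write-up is in fact somewhat more explicit than the paper's (which also notes that invariance on $H$-symmetric sets supplies the uniform boundedness needed for Blaschke's selection theorem), but the argument is the same.
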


\begin{proof}
We first check that the proof of Theorem~\ref{4} for the case when $K\in {\mathcal{K}}^n_n$ works when fiber symmetrization is replaced by an $i$-symmetrization process $\di$ with the four stated properties.  Indeed, the first three properties imply that for each $H\in {\mathcal{G}}(n,i)$, $F_HK\subset \di_HK$, by \cite[Corollary~7.3]{BGG}.  Then (\ref{thmom1}) and (\ref{thmom2}) clearly hold.  The use of Blaschke's selection theorem requires only that the successive symmetrals are uniformly bounded, and this is ensured by the invariance on $H$-symmetric sets.  No further assumptions are needed except for (\ref{eq:mono2}), which holds for $\di$ by the continuity hypothesis.  Lemma~\ref{44a} allows the conclusion that $\di_{U_j}L=L$ at the end of the inductive step and the rest of the proof is straightforward.

Minkowski symmetrization has all the properties stated in the theorem.  In this case the last paragraph of the proof of Theorem~\ref{4} applies without change if fiber symmetrization is replaced by Minkowski symmetrization.
\end{proof}

\begin{ex}\label{44exc}
{\rm (cf.~\cite[Example~5.2]{BGG}.) For all $K\in {\mathcal{K}}^n_n$ and $H\in{\mathcal{G}}(n,i)$, let $\di K$ be the smallest $H$-symmetric spherical cylinder such that some translate orthogonal to $H$ contains $K$.  Then ${\di}_H$ is monotonic, invariant under translations orthogonal to $H$ of $H$-symmetric sets, and continuous, but not invariant on $H$-symmetric sets. Klain's theorem \cite[Theorem~5.1]{Kla12} is not true for the corresponding $i$-symmetrization process $\di$.  To be specific, let $n=2$, $i=1$, and for $m\in \N$, let $H_{2m+1}=(1,1)^{\perp}$ and $H_{2m}=(0,1)^{\perp}$.  If $K=[-1,1]^2$, the successive symmetrals $K_m=\di_{H_m}\cdots\di_{H_1}K$ do not converge; indeed, they are not even uniformly bounded.
\qed}
\end{ex}

\begin{ex}\label{28June}
{\rm (cf.~\cite[Example~5.14]{BGG}.) Let $K\in {\mathcal{K}}^n_n$ and let $H\in{\mathcal{G}}(n,i)$.  If $K=L+y$, where $L$ is $H$-symmetric and $y\in H^{\perp}$, then define $\di K=L$.  Otherwise, define $\di K=t_KC^n$, where $C^n$ is an $o$-symmetric cube with $V_n(C^n)=1$ and a facet parallel to $H$ (and hence $H$-symmetric), and where $t_K\ge 0$ is chosen so that $V_n(\di K)=V_n(K)$. Then ${\di}_H$ is invariant under translations orthogonal to $H$ of $H$-symmetric sets and invariant on $H$-symmetric sets, but neither monotonic nor continuous.  Klain's theorem \cite[Theorem~5.1]{Kla12} is not true for the corresponding $i$-symmetrization process $\di$.  In fact, let $n=2$, $i=1$, $0<\theta<\pi/4$, and for $m\in \N$, let $H_{2m+1}=(\cos\theta,\sin\theta)^{\perp}$ and $H_{2m}=(0,1)^{\perp}$.  If $K=[-1,1]^2$, the successive symmetrals $K_m=\di_{H_m}\cdots\di_{H_1}K$ do not converge.  To see this, note that $K$ is not a translate orthogonal to $H_1$ of an $H_1$-symmetric set, so $K_1=\di_{H_1}K=\phi K$, where $\phi$ denotes a rotation by $\theta$ around the origin.  Then $K_1$ is not a translate orthogonal to $H_2$ of an $H_2$-symmetric set, so $K_2=\di_{H_2}K_1=\phi^{-1}K_1=K$.  It follows that $K_m=K$ for even $m$ and $K_m=\phi K$ for odd $m$.\qed}
\end{ex}

Despite the previous two examples, it is possible that the assumptions in the previous theorem can be weakened; see Problem~\ref{prob0}.  In particular, we do not know if the continuity of $\di$ is necessary, although the following example shows that it is not a consequence of the other assumptions in Theorem~\ref{corthm4}.

\begin{ex}\label{44exd}
{\rm Let $H\in{\mathcal{G}}(n,n-1)$ and for $K\in {\mathcal{K}}^n_n$, let $\di_HK=S_HK$ if $V_n(K)<1$, and let $\di_HK=M_HK$ if $V_n(K)\ge 1$.  Then $\di_H$ is monotonic, invariant on $H$-symmetric sets, and invariant under translations orthogonal to $H$ of $H$-symmetric sets, but not continuous. \qed}
\end{ex}

\begin{thm}\label{thm5cor}
Klain's theorem \cite[Theorem~5.1]{Kla12} (cf.~Theorem~\ref{4}) holds for Schwarz symmetrization, where the limit set $L$ is rotationally symmetric with respect to each $U_j\in {\mathcal{F}}$ that appears infinitely often in $(H_m)$.  The same is true for Minkowski-Blaschke symmetrization.
\end{thm}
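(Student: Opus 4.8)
The plan is to repeat the proof of Theorem~\ref{4} almost verbatim, first with fiber symmetrization replaced by Schwarz symmetrization $S_H$ and then by Minkowski--Blaschke symmetrization $\overline{M}_H$, and to identify which features of these symmetrizations play the roles of the properties of $F_H$ used there. The only facts about $F_H$ entering that proof are: (a) $F_H$ does not decrease $V_n$; (b) $V_n(K\cap L)\le V_n((F_HK)\cap L)$ whenever $L$ is $H$-symmetric; (c) $F_H$ is monotonic, continuous on ${\mathcal K}^n_n$, and invariant on $H$-symmetric sets; and (d) $\Omega(F_HK)=\Omega(K)$ forces $F_HK=K$ (Lemma~\ref{44a}). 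For $\di_H=S_H$, property (a) holds because $S_H$ is volume preserving; in (c) one uses instead that $S_H$ is monotonic, continuous on ${\mathcal K}^n_n$ (Table~\ref{newtable}), and invariant on sets rotationally symmetric with respect to $H$ --- and it is exactly this strengthening from invariance on $H$-symmetric sets to invariance on rotationally symmetric sets that will produce a rotationally symmetric, rather than merely reflection-symmetric, limit; (b) becomes $V_n(K\cap L)\le V_n((S_HK)\cap L)$ whenever $L$ is rotationally symmetric with respect to $H$, and follows exactly as before since then $S_H(K\cap L)\subset(S_HK)\cap S_HL=(S_HK)\cap L$ by monotonicity and invariance, and (a) applies to $K\cap L$; and (d) is replaced by Lemma~\ref{44b}. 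For $\di_H=\overline{M}_H$ the same scheme works: $\overline{M}_H$ does not decrease $V_n$ (since $S_HK\subset\overline{M}_HK$ by Lemma~\ref{1} while $S_H$ preserves $V_n$), it is monotonic, continuous on ${\mathcal K}^n$, and invariant on sets rotationally symmetric with respect to $H$, and the analog of (d) is Lemma~\ref{25June}.

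With these substitutions in hand I would then run the argument of Theorem~\ref{4} step by step. First discard an initial segment so that every $U\in{\mathcal F}$ occurs infinitely often in $(H_m)$, pass to a subsequence $(H_{m_p})$ along which the subspaces of ${\mathcal F}$ first recur in a fixed order, relabel so that this order is $U_1,\dots,U_k$, and thin further so that every $U_j$ occurs in each block $H_{m_p},\dots,H_{m_{p+1}-1}$. Because $S_H$ (resp.\ $\overline{M}_H$) preserves $K|H$ together with $V_n$ (resp.\ mean width), the successive symmetrals $K_m$ stay in a fixed ball, so Blaschke's selection theorem extracts a subsequence $K_{m_p}\to L\in{\mathcal K}^n_n$, while $\bigl(\Omega(K_m)\bigr)$ is nondecreasing by Lemma~\ref{44} and contains the convergent subsequence $\Omega(K_{m_p})\to\Omega(L)$, so $\Omega(K_m)\to\Omega(L)$. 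Next I would prove by induction on $j$ that $L$ is rotationally symmetric with respect to $U_j$. The base case is immediate because $H_{m_p}=U_1$. For the inductive step, letting $m'_p$ be the first index after $m_p$ with $H_{m'_p}=U_j$, the intervening $H_m$ all lie among $U_1,\dots,U_{j-1}$, with respect to each of which $L$ is rotationally symmetric by induction, so the analog of (b) applied successively gives $V_n(K_{m_p}\cap L)\le V_n(K_{m'_p-1}\cap L)\le V_n(L)$; since $K_{m_p}\to L$ forces $V_n(K_{m_p}\cap L)\to V_n(L)$, the middle term tends to $V_n(L)$ as well, hence $V_n(K_{m'_p-1}\,\Delta\,L)\to0$ and so $K_{m'_p-1}\to L$ (these being convex bodies); then the continuity of $\di_{U_j}$ on ${\mathcal K}^n_n$ and of $\Omega$, together with Lemma~\ref{44}, give $\Omega(L)\le\Omega(\di_{U_j}L)=\lim_p\Omega(\di_{U_j}K_{m'_p-1})=\lim_p\Omega(K_{m'_p})=\Omega(L)$, whence $\di_{U_j}L=L$ by Lemma~\ref{44b} (resp.\ Lemma~\ref{25June}), i.e.\ $L$ is rotationally symmetric with respect to $U_j$. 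Finally, with $L$ rotationally symmetric with respect to every $U_j$, the analog of (b) once more yields $V_n(K_{m_p}\cap L)\le V_n(K_m\cap L)\le V_n(L)$ for all $m\ge m_p$, so $V_n(K_m\cap L)\to V_n(L)$ and $K_m\to L$. This handles $K\in{\mathcal K}^n_n$, and the case $\dim K<n$ reduces to it exactly as in the last paragraph of the proof of Theorem~\ref{4}.

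The main obstacle --- and essentially the only substantive departure from Theorem~\ref{4} --- is that $S_H$ and $\overline{M}_H$ are invariant only on sets rotationally symmetric with respect to $H$, not on all $H$-symmetric sets, so every use of the inclusion $\di_H(K\cap L)\subset(\di_HK)\cap L$ and of the resulting volume inequality must be made with an $L$ already known to be rotationally symmetric with respect to the subspace in question; the induction above is arranged precisely so that this is always the case. The remaining delicate points are taken care of by the preparatory results: the sharpened equality conditions of Lemma~\ref{44b} and Lemma~\ref{25June} (which themselves rest on Lemma~\ref{44a} via the volume-preservation of Steiner symmetrization, and on Lemma~\ref{1}), and the continuity of $S_H$ and $\overline{M}_H$ on ${\mathcal K}^n_n$ recorded in Table~\ref{newtable}. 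Beyond these I would expect only routine verifications --- that the successive symmetrals are uniformly bounded and that the $\dim K<n$ reduction goes through for $S_H$ and $\overline{M}_H$ --- to finish the proof.
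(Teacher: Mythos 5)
Your proposal is correct and follows essentially the same route as the paper: rerun the proof of Theorem~\ref{4} with $F_H$ replaced by $S_H$ (resp.\ $\overline{M}_H$), with reflection symmetry replaced by rotational symmetry with respect to the subspaces throughout, and with Lemma~\ref{44a} replaced by Lemma~\ref{44b} (resp.\ Lemma~\ref{25June}) at the end of the inductive step. The only quibble is your justification of uniform boundedness: preservation of mean width (or of $K|H$) does not by itself confine a set, and the clean argument is that $K\subset rB^n$ forces $K_m\subset rB^n$ for all $m$ by monotonicity and invariance on $o$-symmetric balls (the paper instead invokes $\overline{M}_HK\subset O_HK$ from Lemma~\ref{1}).
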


\begin{proof}
We follow the proof of Theorem~\ref{4} for the case when $K\in {\mathcal{K}}^n_n$, replacing fiber symmetrization by either Schwarz symmetrization or Minkowski-Blaschke symmetrization.  Throughout, we replace symmetry with respect to a subspace by rotational symmetry with respect to the subspace.

For the first statement in the theorem, note that Schwarz symmetrization is monotonic, continuous, and does not increase (in fact preserves) $V_n$.  Then the proof goes through without difficulty if Lemma~\ref{44b} is used instead of Lemma~\ref{44a}.

For the second statement, we use Lemma~\ref{1} to obtain $S_HK\subset \overline{M}_HK$ for each $K\in {\mathcal{K}}^n_n$ and $H\in {\mathcal{G}}(n,i)$.  This and the fact that
(\ref{thmom1}) and (\ref{thmom2}) hold when $F_H$ is replaced by $S_H$ allow us to conclude that (\ref{thmom1}) and (\ref{thmom2}) also hold when $F_H$ is replaced by $\overline{M}_H$.  The use of Blaschke's selection theorem requires only that the successive symmetrals are uniformly bounded, and this is ensured by the containment $\overline{M}_HK\subset O_HK$ from Lemma~\ref{1}.  No further assumptions are needed except for (\ref{eq:mono2}), which holds because $\overline{M}_H$ is continuous.  Lemma~\ref{25June} allows the conclusion that $\overline{M}_{U_j}L=L$ at the end of the inductive step and the rest of the proof is straightforward.
\end{proof}

The first statement in the previous theorem extends to the Schwarz symmetrization of compact sets; see Theorem~\ref{july24thm}.

\section{Universal and weakly-universal sequences}\label{universal}

In \cite[Corollary~5.4]{Kla12}, Klain proves the following result:  {\it Let $v_1,\dots,v_k$ be a set of directions in $\R^n$ that contains an irrational basis for $\R^n$.  Suppose that $(u_m)$ is a sequence of directions such that each $u_m$ belongs to $\{v_1,\dots,v_k\}$ and each element of the irrational basis appears infinitely often in $(u_m)$. Then $(u_m)$ is weakly Steiner-universal.}

Weakly Steiner-universal is equivalent to Steiner-universal, as we know.  Thus Klain's result provides specific Steiner-universal sequences, the novel feature being that only a finite set of directions are used.

\begin{thm}\label{21June}
Let $i\in\{1,\dots,n-1\}$ and let $U_j\in {\mathcal{G}}(n,i)$, $j=1,\dots,k$, be as in Corollary~\ref{cor21June} (with $H_j$ replaced by $U_j$). Let $(H_m)$ be a sequence chosen from $\{U_1,\dots,U_k\}$ in which each $U_j$ appears infinitely often.  Then $(H_m)$ is Minkowski-universal.
\end{thm}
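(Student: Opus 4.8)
The plan is to combine Theorem~\ref{corthm4} with Corollary~\ref{cor21June}, and then use the mean-width invariance of Minkowski symmetrization to pass from weak universality to universality. First I would note that Minkowski symmetrization is monotonic, invariant on $H$-symmetric sets, invariant under translations orthogonal to $H$ of $H$-symmetric sets, and continuous (all recorded in Table~\ref{newtable}), so it satisfies the hypotheses of Theorem~\ref{corthm4}. Fixing $K\in{\mathcal{K}}^n_n$ and $l\in\N$, the tail sequence $H_l,H_{l+1},\dots$ is still drawn from $\{U_1,\dots,U_k\}$ and every $U_j$ still occurs infinitely often in it; hence Theorem~\ref{corthm4} yields a compact convex set $L_l$ with $M_{H_m}\cdots M_{H_l}K\to L_l$ as $m\to\infty$, where $L_l$ is $U_j$-symmetric for each $j=1,\dots,k$.

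The next step is to check that $L_l$ is full-dimensional, so that Corollary~\ref{cor21June} applies to it. I would use that $M_H$ does not decrease the inradius: if $B(c,\rho)\subset E$, then, since $R_H$ is an isometry, $\frac12 B(c,\rho)+\frac12 R_HB(c,\rho)=B\bigl(\frac12(c+R_Hc),\rho\bigr)\subset M_HE$. Consequently every iterated symmetral $M_{H_m}\cdots M_{H_l}K$ has inradius at least that of $K$, which is positive since $K\in{\mathcal{K}}^n_n$. As the inradius is continuous in the Hausdorff metric and strictly positive exactly on the full-dimensional members of ${\mathcal{K}}^n$, we get $L_l\in{\mathcal{K}}^n_n$. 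Corollary~\ref{cor21June} (with each $H_j$ replaced by $U_j$) then forces $L_l$ to be an $o$-symmetric ball $r(l,K)B^n$ with $r(l,K)>0$, which already shows that $(H_m)$ is weakly Minkowski-universal.

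Finally, to upgrade this to Minkowski-universality I must show that $r(l,K)$ is independent of $l$. For this I would invoke that Minkowski symmetrization preserves mean width (Table~\ref{newtable}, Property~2 with $f=V_1$) together with the Hausdorff-continuity of $V_1$ on ${\mathcal{K}}^n$, giving $V_1(L_l)=\lim_{m\to\infty}V_1(M_{H_m}\cdots M_{H_l}K)=V_1(K)$ for every $l$. Since an $o$-symmetric ball is determined by its mean width, $r(l,K)=V_1(K)/V_1(B^n)$ is independent of $l$, so $(H_m)$ is Minkowski-universal.

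The substance of the argument lies in Theorem~\ref{corthm4} and Corollary~\ref{cor21June}, which are already in hand, so the remaining steps are routine. The only point needing a small amount of care is the verification that the limit $L_l$ remains full-dimensional, because Corollary~\ref{cor21June} is stated only for convex bodies; the inradius monotonicity of $M_H$ settles this immediately.
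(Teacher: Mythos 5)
Your argument is correct and is essentially the paper's own proof: apply Theorem~\ref{corthm4} to the tail sequence, invoke Corollary~\ref{cor21June} to identify the limit as an $o$-symmetric ball, and use mean-width preservation to make the radius independent of $l$. The only difference is your explicit inradius argument verifying that the limit is full-dimensional (so that Corollary~\ref{cor21June} applies); the paper asserts $L\in{\mathcal{K}}^n_n$ without comment, so this extra check is a welcome, and correct, piece of care.
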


\begin{proof}
Let $K\in{\mathcal{K}}^n_n$ and let $l\in \N$.  By Theorem~\ref{corthm4}, the result $K_m$ of successive Minkowski symmetrizations of $K$ with respect to $H_l, H_{l+1},\dots,H_m$, $m\ge l$, converges as $m\to\infty$ to an $L\in {\mathcal{K}}^n_n$ such that $R_{U_j}L=L$ for $j=1,\dots,k$.  By Corollary~\ref{cor21June} with $K$ and $H_j$ replaced by $L$ and $U_j$, respectively, $L=rB^n$ for some $r=r(l,K)>0$.  Since Minkowski symmetrization preserves mean width, $V_1(rB^n)=V_1(K)$, proving that $r$ is independent of $l$.
\end{proof}

Note that the subspaces $U_j$ in the previous theorem are those specified in Theorem~\ref{teo:sphericalsym_lines} (for $i=1$), Theorem~\ref{finthm} (for $1<i<n-1$), and \cite[Proposition~4.2]{BCD} (for $i=n-1$).  Since Minkowski-universal sequences are Steiner-universal when $i=n-1$, Theorem~\ref{21June} is an extension of Klain's result \cite[Corollary~5.4]{Kla12} stated above.

The following result is \cite[Theorem~8.1]{BGG}.

\begin{prop}\label{Successive}
Let $i\in \{1,\dots,n-1\}$ and let $\di$ be an $i$-symmetrization process on ${\mathcal{K}}^n_n$.  Suppose that
$$
I_HK\subset\di_HK\subset M_HK
$$
for all $H\in {\mathcal{G}}(n,i)$ and all $K\in {\mathcal{K}}^n_n$.  If $(H_m)$ is a Minkowski-universal sequence in ${\mathcal{G}}(n,i)$, then $(H_m)$ is weakly $\di$-universal.
\end{prop}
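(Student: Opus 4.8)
The plan is to fix $K\in{\mathcal{K}}^n_n$ and $l\in\N$, write $K_m=\di_{l,m}K$, and show that $K_m$ converges to the origin-symmetric ball whose radius $r^{*}=r^{*}(l,K)$ is pinned down by the limiting mean width of the sequence $(K_m)$.

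First I would extract two elementary consequences of the sandwich $I_HC\subset\di_HC\subset M_HC$ together with the monotonicity of $M_H$. By induction on $m-l'$, using $\di_HC\subset M_HC$ and monotonicity of $M_H$, one gets $\di_{l',m}C\subset M_{l',m}C$ for every $C\in{\mathcal{K}}^n_n$ and $l'\le m$; applying this with $C=K_{m'}$, $l'=m'+1$, and using $K_m=\di_{m'+1,m}K_{m'}$, this gives the key nesting
$$K_m\subset M_{m'+1,m}K_{m'}\qquad(l\le m'\le m).$$
Secondly, since $M_H$ preserves mean width and $V_1$ is monotone, $V_1(K_{m+1})=V_1(\di_{H_{m+1}}K_m)\le V_1(M_{H_{m+1}}K_m)=V_1(K_m)$, so $(V_1(K_m))$ is non-increasing; and it stays bounded away from $0$ because, choosing a ball $B(z_0,\rho_0)\subset K$, the monotonicity of $I_H$ and the easy identity $I_HB(z,\rho)=B(z|H,\rho)$ give, iterating, $K_m\supset I_{l,m}K\supset I_{l,m}B(z_0,\rho_0)=B(c_m,\rho_0)$ where $c_m$ is the iterated orthogonal projection of $z_0$ (so $\|c_m\|\le\|z_0\|$), hence $V_1(K_m)\ge V_1(\rho_0B^n)>0$. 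Thus $V_1(K_m)\searrow v^{*}>0$; let $r^{*}$ be the radius of the ball of mean width $v^{*}$.

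Next I would trap $(K_m)$ using Minkowski-universality. Applying it with starting index $m'+1$ and body $K_{m'}$, and using that $M$ preserves mean width, $M_{m'+1,m}K_{m'}\to\rho_{m'}B^n$ as $m\to\infty$, where $\rho_{m'}=V_1(K_{m'})/V_1(B^n)\searrow r^{*}$. Combined with the nesting: given $\ee>0$, pick $m'$ with $\rho_{m'}<r^{*}+\ee/2$ and then $m(\ee)$ with $M_{m'+1,m}K_{m'}\subset(\rho_{m'}+\ee/2)B^n$ for $m\ge m(\ee)$; hence $K_m\subset(r^{*}+\ee)B^n$ for all $m\ge m(\ee)$. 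In particular $(K_m)$ is uniformly bounded, so by Blaschke's selection theorem every subsequence has a sub-subsequence converging in the Hausdorff metric to some $L$, and $L\supset B(c,\rho_0)$ for a cluster point $c$ of $(c_m)$, so $L\in{\mathcal{K}}^n_n$. The trapping gives $L\subset r^{*}B^n$; continuity of $V_1$ and the monotone convergence $V_1(K_m)\to v^{*}$ give $V_1(L)=v^{*}=V_1(r^{*}B^n)$; and strict monotonicity of mean width on convex bodies forces $L=r^{*}B^n$. Since every subsequential limit equals the origin-symmetric ball $r^{*}B^n$, the whole sequence $\di_{l,m}K$ converges to $r^{*}B^n$; as $l$ and $K$ were arbitrary and $r^{*}>0$, $(H_m)$ is weakly $\di$-universal.

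The step carrying the argument — and the one I expect to need the most care — is the trapping. The naive bound $K_m\subset M_{l,m}K\to(V_1(K)/V_1(B^n))B^n$ only confines $K_m$ to a ball of radius $V_1(K)/V_1(B^n)$, which is in general strictly larger than $r^{*}$ (for $\di=I=S$, Steiner symmetrization, the limiting radius is the volume radius, strictly below the mean-width radius unless $K$ is already a ball), so one must instead compare $K_m$ with the Minkowski symmetrals of the intermediate bodies $K_{m'}$ and let $m'\to\infty$ to shrink the confining ball down to $r^{*}B^n$. The remaining ingredients — the inductive nesting, mean-width monotonicity, the behaviour of $I_H$ and $M_H$ on balls, and the principle that a bounded sequence all of whose subsequential limits coincide converges — are routine.
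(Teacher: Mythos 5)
Your proof is correct and follows essentially the same route as the paper's (the paper cites \cite[Theorem~8.1]{BGG} and reproduces the argument for the analogous Theorem~\ref{2}): the upper containment $\di_HK\subset M_HK$ plus Minkowski-universality applied to the intermediate symmetrals $K_{m'}$ traps the sequence in shrinking balls, the lower containment $I_HK\subset\di_HK$ keeps the mean width bounded away from zero, and strict monotonicity of $V_1$ identifies every subsequential limit with $r^{*}B^n$. The only difference is organizational (you establish the uniform trapping first and then identify all subsequential limits, while the paper fixes one convergent subsequence and compares its limit with the balls $B_{p,K}$), which does not change the substance.
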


Note that if $i=n-1$, then $I_H=S_H$, the Steiner symmetral.  As was observed above, Minkowski-universal and Steiner-universal sequences coincide.  By \cite[Theorem~6.3]{BGG}, when $i=n-1$ the hypotheses of the following corollary hold if the assumption that $\di_H$ is monotonic and invariant under $H$-symmetric sets is replaced by the assumption that $\di_H$ is strictly monotonic, idempotent, and either invariant on $H$-symmetric spherical cylinders or projection invariant.  The next result is \cite[Corollary~8.2]{BGG}.

\begin{prop}\label{Successivecor}
Let $i\in \{1,\dots,n-1\}$ and let $\di$ be an $i$-symmetrization process on ${\mathcal{K}}^n_n$.  Suppose that for each $H\in {\mathcal{G}}(n,i)$,
${\di_H}$ is monotonic, invariant on $H$-symmetric sets, and invariant under translations orthogonal to $H$ of $H$-symmetric sets.  If $(H_m)$ is a Minkowski-universal sequence in ${\mathcal{G}}(n,i)$, then $(H_m)$ is weakly $\di$-universal.
\end{prop}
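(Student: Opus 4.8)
The plan is to reduce Proposition~\ref{Successivecor} to Proposition~\ref{Successive}. That is, I would show that the three stated properties of $\di_H$ force the sandwich
$I_HK\subset \di_HK\subset M_HK$
for every $H\in{\mathcal{G}}(n,i)$ and every $K\in{\mathcal{K}}^n_n$; once this is in hand, Proposition~\ref{Successive} applies verbatim and gives that any Minkowski-universal sequence in ${\mathcal{G}}(n,i)$ is weakly $\di$-universal, with nothing further to prove.

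To establish the containment I would use the dual descriptions of the fiber and Minkowski symmetrals recalled in Section~\ref{symm} (from \cite[Theorem~7.1]{BGG}): $F_HK$ is the union of all $H$-symmetric compact convex sets some translate of which orthogonal to $H$ is contained in $K$, while $M_HK$ is the intersection of all $H$-symmetric compact convex sets some translate of which orthogonal to $H$ contains $K$. For the left-hand inclusion, if $L$ is $H$-symmetric and $L+y\subset K$ with $y\in H^{\perp}$, then invariance on $H$-symmetric sets together with invariance under translations orthogonal to $H$ of such sets gives $\di_H(L+y)=\di_HL=L$, and monotonicity then yields $L=\di_H(L+y)\subset \di_HK$; taking the union over all such $L$ gives $F_HK\subset \di_HK$, and since $I_HK\subset F_HK$ by Theorem~\ref{thmapril3}, also $I_HK\subset\di_HK$. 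The right-hand inclusion is the mirror argument, essentially the one used for the right-hand inclusion in the proof of Lemma~\ref{1}: if $L$ is $H$-symmetric and $K\subset L+y$ with $y\in H^{\perp}$, then monotonicity and the two invariance properties give $\di_HK\subset\di_H(L+y)=\di_HL=L$, so $\di_HK$ is contained in the intersection defining $M_HK$, i.e. $\di_HK\subset M_HK$. Alternatively one may simply quote \cite[Corollary~7.3]{BGG}, which packages exactly this implication.

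There is no real obstacle here, since the statement is essentially an immediate consequence of Proposition~\ref{Successive}; the only point requiring a little care is to confirm that the two families of $H$-symmetric sets appearing in the dual characterizations are nonempty when $K\in{\mathcal{K}}^n_n$ (so that $F_HK$ and $M_HK$ are genuine nonempty compact convex sets and the inclusions are meaningful), which is clear because a sufficiently large $H$-symmetric ball has a translate orthogonal to $H$ containing $K$, and a sufficiently small one has such a translate contained in $K$. With the sandwich $I_HK\subset\di_HK\subset M_HK$ in place, Proposition~\ref{Successive} finishes the proof.
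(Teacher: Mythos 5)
Your proposal is correct and matches the intended argument: the paper gives no proof of its own here but simply cites \cite[Corollary~8.2]{BGG}, which is obtained exactly as you describe, by using the three hypotheses (via the dual characterizations, packaged as \cite[Corollary~7.3]{BGG}) to get $I_HK\subset F_HK\subset\di_HK\subset M_HK$ and then invoking Proposition~\ref{Successive}. The only point worth flagging is that in the union defining $F_HK$ the sets $L$ may be lower-dimensional while $\di_H$ is only defined on ${\mathcal{K}}^n_n$, so one should restrict to full-dimensional $H$-symmetric $L$ (whose union is still dense in $F_HK$, which suffices since $\di_HK$ is closed) or, as you suggest, just quote \cite[Corollary~7.3]{BGG} directly.
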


In particular, the sequence $(H_m)$ from Theorem~\ref{21June} is weakly $\di$-universal whenever $\di$ satisfies the hypotheses of Proposition~\ref{Successivecor}, providing a further extension of Klain's result \cite[Corollary~5.4]{Kla12} stated above.

\cite[Examples~5.10 and~5.14]{BGG}, both with $j=1$ (say) and $B^n$ replaced by an $H$-symmetric $n$-dimensional cube, show that the assumptions of invariance on $H$-symmetric sets and monotonicity, respectively, cannot be dropped in Proposition~\ref{Successivecor}.  We do not have an example showing that the assumption of invariance under translations orthogonal to $H$ of $H$-symmetric sets is necessary (see Problem~\ref{prob0}).  However, the following example (see \cite[Example~8.3]{BGG}) shows that if this assumption is omitted, the hypotheses of Proposition~\ref{Successivecor} do not allow the stronger conclusion that $(H_m)$ is $\di$-universal.

\begin{ex}\label{weakdi}
{\em  Let $\di$ be the symmetrization process corresponding to the symmetrization $\di_H$ in \cite[Example~5.8]{BGG}, with $n=2$ and $i=1$.  Let $0<\theta<\pi/2$ be an irrational multiple of $\pi$ and let $H_1$ be the line through the origin in the direction $(\cos\theta,\sin\theta)$.
For $m\in \N$, let $H_{2m+1}=H_1$ and $H_{2m}=(0,1)^{\perp}$.  Then the sequence $(H_m)$ is Steiner-universal; see, for example, \cite[Corollary~5.4]{Kla12}.  It is shown in \cite[Example~8.3]{BGG} that $(H_m)$ is weakly $\di$-universal but not $\di$-universal.
\qed}
\end{ex}

We now examine Schwarz and Minkowski-Blaschke symmetrization.

\begin{ex}\label{ex21}
{\em Define $\di_H:{\mathcal{K}}^n_n\rightarrow \left({\mathcal{K}}^n_n\right)_H$ by
$$\di_HK=(K|H)+t_K(B^n\cap H^{\perp}),$$
where $t_K\ge 0$ is chosen so that $V_n(\di_HK)=V_n(K)$.  Then $\di_H$ is volume-preserving, idempotent, invariant on $H$-symmetric spherical cylinders, projection invariant, invariant under translations orthogonal to $H$ of $H$-symmetric sets, and rotationally symmetric, but not monotonic or invariant on $H$-symmetric sets.\qed}
\end{ex}

\begin{ex}\label{ex22}
{\em (Generalized Schwarz symmetrization.) Let $i\in \{1,\dots,n-2\}$, let $k>0$, and let $f:{\mathcal{K}}^{n-i}_{n-i}\to [0,\infty)$ be strictly increasing, rigid-motion invariant, homogeneous of degree $k$, and such that
\begin{equation}\label{BrunnM}
f((1-t)K+tL)^{1/k}\ge (1-t)f(K)^{1/k}+tf(L)^{1/k}
\end{equation}
for $t\in [0,1]$ and $K,L\in {\mathcal{K}}^{n-i}_{n-i}$.  Let $H\in {\mathcal{G}}(n,i)$.  For $K\in {\mathcal{K}}^n_n$, define $\di_HK$ such that for each $(n-i)$-dimensional plane $G$ orthogonal to $H$ and meeting $K$, the set $G\cap \di_HK$ is a (possibly degenerate) $(n-i)$-dimensional closed ball with center in $H$ such that $f(G\cap \di_HK)=f(G\cap K)$.  A standard argument for the Brunn-Minkowski inequality (see \cite[p.~361]{Gar02}) shows that (\ref{BrunnM}) and the homogeneity of $f$ imply that $\di_HK\in {\mathcal{K}}^n_n$.  Then $\di_H$ is strictly monotonic, idempotent, invariant on $H$-symmetric spherical cylinders, projection invariant, invariant under translations orthogonal to $H$ of $H$-symmetric sets, and rotationally symmetric, but not invariant on $H$-symmetric sets. \qed}
\end{ex}

When $f=V_{n-i}$ in Example~\ref{ex22}, we retrieve the classical Schwarz symmetrization.  One can also take $f=V_j$, $j\in \{1,\dots,n-i-1\}$, since (\ref{BrunnM}) is then the Brunn-Minkowski inequality for quermassintegrals \cite[(74), p.~393]{Gar02} (see also \cite[Satz~XI, p.~260]{Had57}); in this case, $\di_H$ is not $V_k$-preserving for any $k\in \{1,\dots,n\}$.  In view of this, the following characterization of Schwarz symmetrization is worth stating.  Note that by \cite[Theorem~3.2]{Sar}, the assumption that $\di$ is invariant on $H$-symmetric cylinders can be replaced by projection invariance.

\begin{thm}\label{Schwarz}
Let $i\in \{1,\dots,n-2\}$, let $H\in {\mathcal{G}}(n,i)$, and let $\di_H$ be an $i$-symmetrization on ${\mathcal{K}}^n_n$.  Suppose that $\di_H$ is monotonic, volume preserving, rotationally symmetric, and invariant on $H$-symmetric cylinders.  Then $\di_H$ is Schwarz symmetrization with respect to $H$.
\end{thm}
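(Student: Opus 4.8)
The plan is to fix $K\in{\mathcal{K}}^n_n$ and prove the single inclusion $S_HK\subset\di_HK$ together with the equality of volumes $V_n(\di_HK)=V_n(K)=V_n(S_HK)$; since a convex body contained in a convex body of the same volume must coincide with it (a nonempty relatively open subset of a positive-dimensional convex body has positive volume), this forces $\di_HK=S_HK$ and proves the theorem. Here $V_n(\di_HK)=V_n(K)$ is the volume-preservation hypothesis and $V_n(S_HK)=V_n(K)$ is Cavalieri's principle, so everything reduces to the inclusion.

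Because $\di_H$ is rotationally symmetric, $\di_HK$ is described by a concave radius function: for $y\in(\di_HK)|H$ we have $(\di_HK)\cap(H^\perp+y)=\phi(y)(B^n\cap H^\perp)+y$, and we extend by setting $\phi(y)=0$ for the remaining $y\in H$. This extended $\phi$ is upper semicontinuous on $H$, since $\di_HK$ is closed and its sections orthogonal to $H$ are balls centred on $H$: the sets $\{y\in H:\phi(y)\ge c\}$, $c>0$, are closed by a standard compactness argument. Writing $S_HK\cap(H^\perp+y)=s_y(B^n\cap H^\perp)+y$, where $\kappa_{n-i}s_y^{n-i}={\mathcal{H}}^{n-i}(K\cap(H^\perp+y))$, it suffices to prove $\phi(y)\ge s_y$ for every $y\in\inte(K|H)$; moving a point of $\partial(K|H)$ toward the interior along a segment and using the closedness of $\di_HK$ then upgrades this pointwise estimate to $S_HK\subset\di_HK$.

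To bound $\phi$ from below, fix $y_0\in\inte(K|H)$ and, for small $\ee>0$, set $K_\ee=K\cap\big((B(y_0,\ee)\cap H)+H^\perp\big)$, which lies in ${\mathcal{K}}^n_n$. Choosing $T$ so large that $K\subset(B(o,T)\cap H)+T(B^n\cap H^\perp)$, we get $K_\ee\subset C_\ee:=(B(y_0,\ee)\cap H)+T(B^n\cap H^\perp)$, an $H$-symmetric spherical cylinder. By monotonicity and invariance on cylinders, $\di_HK_\ee\subset\di_HC_\ee=C_\ee$, so $(\di_HK_\ee)|H\subset B(y_0,\ee)\cap H$; by monotonicity again, $\di_HK_\ee\subset\di_HK$, so the radius of $\di_HK_\ee$ over any $y$ is at most $\phi(y)$. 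Since $\di_HK_\ee$ is rotationally symmetric, Fubini's theorem gives
\[
V_n(K_\ee)=V_n(\di_HK_\ee)\le\kappa_i\,\ee^{\,i}\,\kappa_{n-i}\Big(\sup_{y\in B(y_0,\ee)\cap H}\phi(y)\Big)^{n-i}.
\]
Dividing by $\kappa_i\ee^{\,i}$ and letting $\ee\to0$, the left-hand side tends to ${\mathcal{H}}^{n-i}(K\cap(H^\perp+y_0))$ because $y\mapsto{\mathcal{H}}^{n-i}(K\cap(H^\perp+y))$ is continuous at the interior point $y_0$ (its $(n-i)$-th root is concave by the Brunn--Minkowski inequality), while the right-hand supremum tends to $\phi(y_0)$ by upper semicontinuity of $\phi$. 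Hence $\kappa_{n-i}\phi(y_0)^{n-i}\ge{\mathcal{H}}^{n-i}(K\cap(H^\perp+y_0))=\kappa_{n-i}s_{y_0}^{n-i}$, i.e.\ $\phi(y_0)\ge s_{y_0}$, as required.

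I expect the main obstacle to be precisely this limiting step: controlling the thin-slab estimate as $\ee\to0$ requires combining the continuity of the section-volume function on $\inte(K|H)$ with the upper semicontinuity of the radius function of $\di_HK$, the latter resting on the closedness and rotational symmetry of $\di_HK$. Once the radius inequality is established, the closure argument for directions in $\partial(K|H)$ and the ``equal volumes force equality'' conclusion are routine facts about convex bodies.
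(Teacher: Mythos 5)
Your proof is correct, but it takes a more self-contained route than the paper. The paper's own proof is essentially a two-line citation: it invokes \cite[Theorem~10.1(i)]{BGG}, which states that monotonicity, volume preservation, and invariance on $H$-symmetric spherical cylinders already force $V_{n-i}\left((\di_H K)\cap (H^{\perp}+x)\right)=V_{n-i}\left(K\cap (H^{\perp}+x)\right)$ for all $x\in H$, and then observes that rotational symmetry identifies each section as the Schwarz section. You instead reprove the needed section estimate from scratch via the thin-slab argument: trapping $\di_H K_\ee$ inside the cylinder $C_\ee$ and inside $\di_H K$, comparing volumes, and passing to the limit using continuity of the section-volume function on $\inte (K|H)$ and upper semicontinuity of the radius function $\phi$. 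Two small differences in economy are worth noting. First, you only extract the one-sided inequality $\phi(y_0)\ge s_{y_0}$, and then close the argument with the global identity $V_n(\di_H K)=V_n(K)=V_n(S_HK)$ together with the fact that nested convex bodies of equal volume coincide; the cited result delivers equality of all section volumes directly, which is slightly stronger but not needed. Second, your boundary step (pushing points of $S_HK$ over $\relbd (K|H)$ into the interior along a segment and using closedness of $\di_H K$) is a necessary piece of bookkeeping that the citation-based proof never has to surface. All the supporting claims you rely on check out: $K_\ee\in{\mathcal{K}}^n_n$ because $y_0\in\inte(K|H)$ guarantees interior points of $K$ above $y_0$; $\{\phi\ge c\}$ is closed for $c>0$ by compactness of $\di_H K$; and the averaged left-hand side converges by the Brunn--Minkowski concavity of the $(n-i)$-th root of the section function. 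So the proposal is a valid, and in fact more informative, substitute for the paper's argument.
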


\begin{proof}
If $\di_H$ is monotonic, volume preserving, and invariant on $H$-symmetric cylinders, then by \cite[Theorem~10.1(i)]{BGG}, we have
\begin{equation}\label{vni}
V_{n-i}\left((\di K)\cap (H^{\perp}+x)\right)=V_{n-i}\left(K\cap (H^{\perp}+x)\right)
\end{equation}
for all $K\in {{\mathcal{K}}^n_n}$ and $x\in H$.  Then we need only observe that if $\di_H$ is rotationally symmetric and (\ref{vni}) is satisfied, $\di_H$ must be Schwarz symmetrization with respect to $H$ by its very definition.
\end{proof}

Examples~\ref{ex21}, \ref{ex22}, \cite[Example~10.7]{BGG}, and \cite[Example~5.10]{BGG} with $j=n$, show that none of the assumptions in the previous theorem can be omitted.

The following corollary extends \cite[Theorem~3.1]{CouD14}, which corresponds to the case when $i=n-1$.

\begin{cor}\label{CD}
Let $i\in \{1,\dots,n-1\}$. A sequence $(H_m)$ in ${\mathcal{G}}(n,i)$ is Schwarz-universal if and only if it is Minkowski-Blaschke-universal.
\end{cor}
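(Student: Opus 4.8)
The strategy is to carry over the proof of \cite[Theorem~3.1]{CouD14}, which is exactly the case $i=n-1$ (there $S_H$ is Steiner symmetrization and $\overline M_H=M_H$ is Minkowski symmetrization), replacing the classical inclusion $S_HK\subset M_HK$ by Lemma~\ref{1} and the Steiner and Minkowski equality conditions by Lemmas~\ref{44b} and~\ref{25June}. Two preliminary reductions simplify matters. Since $S_H$ preserves $V_n$ and $\overline M_H$ preserves the mean width $V_1$, whenever the successive Schwarz (respectively, Minkowski-Blaschke) symmetrals of a $K\in{\mathcal K}^n_n$ with respect to $H_l,H_{l+1},\dots$ converge to an $o$-symmetric ball, that ball has the same volume (respectively, mean width) as $K$ and is therefore independent of $l$; so a sequence is Schwarz- (respectively, Minkowski-Blaschke-) universal precisely when, for every $K\in{\mathcal K}^n_n$, the symmetrals $S_{H_m}\cdots S_{H_1}K$ (respectively, $\overline M_{H_m}\cdots\overline M_{H_1}K$) converge, and moreover every tail of a universal sequence is again universal. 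It therefore suffices, for a fixed $K\in{\mathcal K}^n_n$, to prove that $K'_m:=S_{H_m}\cdots S_{H_1}K$ converges to a ball if and only if $K_m:=\overline M_{H_m}\cdots\overline M_{H_1}K$ does.

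By Lemma~\ref{1} and the monotonicity of Schwarz and Minkowski-Blaschke symmetrization, an induction gives $K'_m\subset K_m$ for all $m$. Since $\overline M_H$ preserves mean width and does not move the centroid away from $o$, the sequence $(K_m)$ is uniformly bounded, and hence so is $(K'_m)$. Suppose first that $(H_m)$ is Schwarz-universal, so $K'_m\to rB^n$ with $\kappa_nr^n=V_n(K)$. By Blaschke's selection theorem any subsequence of $(K_m)$ has a further subsequence converging to some $L\in{\mathcal K}^n$; passing to the limit in $K'_m\subset K_m$ gives $rB^n\subset L$, and continuity of $V_1$ gives $V_1(L)=V_1(K)$. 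It is enough to show that $L$ is an $o$-symmetric ball, for then $L$ is the ball of mean width $V_1(K)$ and, the limit being independent of the subsequence, $K_m\to L$, so $(H_m)$ is Minkowski-Blaschke-universal. To identify $L$ one argues as in the proof of Theorem~\ref{4} and in \cite[Theorem~3.1]{CouD14}: by Lemma~\ref{44} applied with $f=V_n$ and $\di_H=\overline M_H$ — which is monotonic, invariant on $o$-symmetric balls, and, by Lemma~\ref{1} together with the fact that $S_H$ preserves $V_n$, does not decrease $V_n$ — the sequence $\bigl(\Omega(K_m)\bigr)$ is non-decreasing, hence convergent; since $\Omega$ and $\overline M_{(\cdot)}$ are continuous, the increments $\Omega(K_{m+1})-\Omega(K_m)=\Omega(\overline M_{H_{m+1}}K_m)-\Omega(K_m)$ tend to $0$, and feeding this into the strict equality condition of Lemma~\ref{25June} forces $L$ to be rotationally symmetric with respect to a family of subspaces rich enough to be an $o$-symmetric ball, exactly as in \cite{CouD14}. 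The converse is symmetric: if $(H_m)$ is Minkowski-Blaschke-universal, then $K_m\to r'B^n$ shows $(K'_m)$ is uniformly bounded, any subsequential limit $L'$ of $(K'_m)$ satisfies $L'\subset r'B^n$ and $V_n(L')=V_n(K)$, and the same layering argument, now using Lemma~\ref{44b} in place of Lemma~\ref{25June} and the fact that $S_H$ does not decrease (indeed preserves) $V_n$, identifies $L'$ as the ball of volume $V_n(K)$.

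The routine ingredients above — the reductions, the inclusion $K'_m\subset K_m$, and the uniform bounds — present no difficulty. The real work, and the only place where a delicate argument is needed, is the identification of the subsequential limit ($L$ or $L'$) as an $o$-symmetric ball: because the sequence $(H_m)$ is completely arbitrary, one cannot pass to a subsequence along which prescribed subspaces recur, so Klain's combinatorial reordering device from the proof of Theorem~\ref{4} is unavailable, and one must instead exploit the monotonicity of the layering functional $\Omega$ along the entire process, the vanishing of its increments, and the strict equality conditions of Lemmas~\ref{44b} and~\ref{25June}, in the manner of \cite[Theorem~3.1]{CouD14}. This is where I expect the main technical effort to lie.
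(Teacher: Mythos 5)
Your overall reduction is exactly the paper's: the entire content of the corollary is that Lemma~\ref{1} supplies $S_HK\subset \overline{M}_HK$, so that $S_H$ preserves $V_n$ and does not increase $V_1$ while $\overline{M}_H$ preserves $V_1$ and does not decrease $V_n$, after which the proof of \cite[Theorem~3.1]{CouD14} transfers essentially verbatim. Your preliminary steps (independence of the radius from $l$, the inclusion $K'_m\subset K_m$ by induction, uniform boundedness) are fine, though for boundedness the clean reason is $h_{\overline{M}_HK}\le \max h_K$, or $\overline{M}_HK\subset O_HK$ from Lemma~\ref{1}; mean-width preservation plus centroid control is not by itself a bound.

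The genuine gap is in the step that identifies the subsequential limit as a ball. You propose a layering-function argument: $\bigl(\Omega(K_m)\bigr)$ is non-decreasing, its increments tend to $0$, and Lemma~\ref{25June} (resp.\ Lemma~\ref{44b}) then ``forces'' the limit to be rotationally symmetric with respect to enough subspaces. This cannot work as stated. Vanishing increments $\Omega(K_{m+1})-\Omega(K_m)\to 0$ never yield the exact equality $\Omega(\overline{M}_UL)=\Omega(L)$ that Lemmas~\ref{44b} and~\ref{25June} require; to extract such an equality at the limit one needs some subspace $U$ to recur infinitely often along the subsequence, and that recurrence is precisely what is unavailable here, since $(H_m)$ is an arbitrary universal sequence rather than one drawn from a finite set. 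The equality conditions are the engine of the Klain-type results (Theorems~\ref{4} and~\ref{thm5cor}), not of the universality transfer. The mechanism that actually closes the argument --- visible in the paper's Theorem~\ref{2} and in \cite[Theorem~3.1]{CouD14} --- is to apply the assumed universality to the intermediate bodies themselves: for $p\le s$ one has $\overline{M}_{1,m_s}K=\overline{M}_{m_p+1,m_s}(\overline{M}_{1,m_p}K)\supset S_{m_p+1,m_s}(\overline{M}_{1,m_p}K)$, and the right-hand side converges, by Schwarz-universality applied to the convex body $\overline{M}_{1,m_p}K$ and the tail sequence, to the origin-centered ball of volume $V_n(\overline{M}_{1,m_p}K)$; letting $p\to\infty$ these volumes tend to $V_n(L)$, so $L$ contains origin-centered balls of volume tending to its own and strict monotonicity of $V_n$ forces $L$ to be that ball. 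In the converse direction one sandwiches $S_{1,m_s}K\subset \overline{M}_{m_p+1,m_s}(S_{1,m_p}K)$ and uses the strict monotonicity of $V_1$ together with the monotone convergence of $V_1(S_{1,m}K)$; note that your converse also invokes the wrong functional, since knowing $L'\subset r'B^n$ and $V_n(L')=V_n(K)$ does not determine $L'$ --- it is the mean width that pins it down.
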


\begin{proof}
We know that $S_{H_m}$ preserves $V_n$ and does not increase $V_1$.  We also know that $\overline{M}_{H_m}$ preserves $V_1$ and conclude from Lemma~\ref{1} that $\overline{M}_{H_m}$ does not decrease $V_n$.  This allows the proof of \cite[Theorem~3.1]{CouD14} to be applied almost verbatim.
\end{proof}

The special case of the following theorem when $i=1$, $n=3$, $k=2$, $U_1$ and $U_2$ are two orthogonal lines through the origin in $\R^3$, $H_{2m+1}=H_1$, and $H_{2m}=H_2$ for $m\in \N$, was first proved by Tonelli \cite{Ton15}. His rather long and complicated argument applied not only to convex bodies but general compact sets.  Tsolomitis \cite[Theorem~1.7(ii)]{Tso96} proves that Schwarz-universal sequences exist for some other values of $i$ and $n$.  However, his result requires that $n^{2/(n-1)}<2$, which only holds when $n\ge 7$.

\begin{thm}\label{22}
Let $i\in\{1,\dots,n-2\}$ and let $U_j\in {\mathcal{G}}(n,i)$, $j=1,\dots,k$, be as in Theorem~\ref{new}(i) (with $H_j$ replaced by $U_j$).   Let $(H_m)$ be a sequence chosen from $\{U_1,\dots,U_k\}\subset {\mathcal{G}}(n,i)$ in which each $U_j$ appears infinitely often. Then $(H_m)$ is Schwarz-universal and hence, by Corollary~\ref{CD}, also Minkowski-Blaschke-universal.
\end{thm}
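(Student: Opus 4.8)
The plan is to follow the proof of Theorem~\ref{21June} almost verbatim, with Minkowski symmetrization replaced by Schwarz symmetrization, Theorem~\ref{corthm4} replaced by Theorem~\ref{thm5cor}, Corollary~\ref{cor21June} replaced by Theorem~\ref{new}, and mean width preservation replaced by volume preservation.

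First I would fix $K\in{\mathcal{K}}^n_n$ and $l\in\N$ and consider the successive Schwarz symmetrals $K_m=S_{H_m}\cdots S_{H_l}K$ for $m\ge l$. Applying Theorem~\ref{thm5cor} to the sequence $(H_l,H_{l+1},\dots)$ --- in which every $U_j$ still appears infinitely often --- produces a compact convex set $L=L(l,K)$ such that $K_m\to L$ as $m\to\infty$ and $L$ is rotationally symmetric with respect to each $U_j$, $j=1,\dots,k$. Since Schwarz symmetrization preserves $V_n$, we have $V_n(K_m)=V_n(K)>0$ for all $m$, so by continuity of $V_n$ on ${\mathcal{K}}^n$, $V_n(L)=V_n(K)>0$; in particular $L\in{\mathcal{K}}^n_n$.

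Next I would invoke Theorem~\ref{new}: since $1\le i\le n-2$ and the subspaces $U_1,\dots,U_k$ satisfy condition (ii) there by hypothesis, condition (i) holds, so the convex body $L$, being rotationally symmetric with respect to each $U_j$, must be an $o$-symmetric ball, say $L=r(l,K)B^n$. From $r(l,K)^n\kappa_n=V_n(L)=V_n(K)$ we obtain $r(l,K)=(V_n(K)/\kappa_n)^{1/n}$, which does not depend on $l$. Hence $K_m\to r(K)B^n$ with $r(K)$ independent of $l$, i.e., $(H_m)$ is Schwarz-universal. The Minkowski-Blaschke-universality of $(H_m)$ then follows at once from Corollary~\ref{CD}.

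I do not expect any real obstacle here, since all the substantive work is already contained in Theorem~\ref{thm5cor} (the Schwarz version of Klain's theorem) and Theorem~\ref{new}. The only two points requiring a moment's care are verifying that the limit set $L$ is a genuine convex body, so that Theorem~\ref{new} is applicable --- this is exactly where volume preservation is used --- and checking that the radius of the limiting ball is independent of the starting index $l$, which is again a consequence of volume preservation. This is precisely the feature that upgrades weak universality to universality, in contrast to situations such as Example~\ref{weakdi}.
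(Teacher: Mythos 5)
Your proposal is correct and follows essentially the same route as the paper: Theorem~\ref{thm5cor} gives convergence to a limit $L$ rotationally symmetric with respect to each $U_j$, Theorem~\ref{new} (via condition (ii) $\Rightarrow$ (i)) forces $L$ to be an $o$-symmetric ball, and volume preservation pins down the radius independently of $l$. Your extra remark that $V_n(L)=V_n(K)>0$ guarantees $L\in{\mathcal{K}}^n_n$, so that Theorem~\ref{new} is applicable, is a point the paper leaves implicit and is worth making.
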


\begin{proof}
Let $K\in{\mathcal{K}}^n_n$ and let $l\in \N$.  By Theorem~\ref{thm5cor}, the result $K_m$ of successive Schwarz symmetrizations of $K$ with respect to $H_l, H_{l+1},\dots,H_m$, $m\ge l$, converges as $m\to\infty$ to an $L\in {\mathcal{K}}^n_n$ that is rotationally symmetric with respect to $U_j$ for $j=1,\dots,k$.  By Corollary~\ref{cor22June} with $K$ and $H_j$ replaced by $L$ and $U_j$, respectively, $L=rB^n$ for some $r=r(l,K)>0$.  Since Schwarz symmetrization preserves volume, $V_n(rB^n)=V_n(K)$, proving that $r$ is independent of $l$.
\end{proof}

Note that in view of Lemma~\ref{1}, we can take $\di_H=\overline{M}_H$ in the next theorem. The following proof is essentially the same as that of \cite[Theorem~8.1]{BGG} but is included for the reader's convenience.

\begin{thm}\label{2}
Let $i\in \{1,\dots,n-1\}$ and let $\di$ be an $i$-symmetrization process on ${\mathcal{K}}^n_n$.  Suppose that
\begin{equation}\label{SdiM2}
S_HK\subset\di_HK\subset O_HK
\end{equation}
for all $H\in {\mathcal{G}}(n,i)$ and all $K\in {\mathcal{K}}^n_n$.  If $(H_m)$ is a Schwarz-universal sequence in ${\mathcal{G}}(n,i)$, then $(H_m)$ is weakly $\di$-universal.
\end{thm}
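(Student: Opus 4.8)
The plan is to mimic the proof of Proposition~\ref{Successive} (that is, \cite[Theorem~8.1]{BGG}), but using Schwarz symmetrization and the outer rotational symmetral $O_H$ in place of Steiner/Minkowski symmetrization, and exploiting the sandwich (\ref{SdiM2}) together with the fact that Schwarz symmetrization preserves volume while $O_H$ does not increase the circumradius. Fix $K\in{\mathcal{K}}^n_n$ and $l\in\N$, and let $K_m=\di_{H_m}\cdots\di_{H_l}K$ be the successive $\di$-symmetrals. Let $L_m=S_{H_m}\cdots S_{H_l}K$ be the successive Schwarz symmetrals and let $O_m$ be the successive outer rotational symmetrals (using the same subspaces). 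Since $(H_m)$ is Schwarz-universal, $L_m\to r(l,K)B^n$ for some $r(l,K)>0$; because Schwarz symmetrization preserves $V_n$, in fact $r(l,K)^n\kappa_n=V_n(K)$, so $r=r(l,K)$ is independent of $l$ and we may write $r B^n$ for the common limit. The goal is to squeeze $K_m$ between something converging to $rB^n$ from the inside and something converging to $rB^n$ from the outside.

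First I would establish, by induction on $m$ using the monotonicity of Schwarz symmetrization and of $O_H$ together with (\ref{SdiM2}), the containments $L_m\subset K_m\subset O_m$ for all $m\ge l$: the base case is $S_{H_l}K\subset\di_{H_l}K\subset O_{H_l}K$ from (\ref{SdiM2}), and the inductive step applies $S_{H_{m+1}}$, $\di_{H_{m+1}}$, $O_{H_{m+1}}$ respectively to the three nested bodies, using monotonicity of each and (\ref{SdiM2}) at the top layer. The inner bound $L_m\to rB^n$ is then immediate. For the outer bound, the key point is that the circumradius $R(O_m)$ of $O_m$ is non-increasing in $m$: indeed $O_HK$ is (a translate orthogonal to $H$ of) a rotationally symmetric convex body containing $K$, and its circumradius equals that of $K$ when $K$ is already rotationally symmetric w.r.t. $H$; more to the point, $O_H$ does not increase the circumradius because the smallest ball containing $K$ is rotationally symmetric with respect to every $H$, so $O_HK\subset R(K)B^n$ (up to the allowed translation), giving $R(O_HK)\le R(K)$. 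Hence $R(O_m)\le R(K_l)\le R(K)$ for all $m$. Combined with $O_m\supset L_m$ and $V_n(L_m)\to r^n\kappa_n$, together with Blaschke selection, every subsequential limit $M$ of $(O_m)$ is a rotationally-symmetric-in-the-relevant-directions convex body with $rB^n\subset M$ and $V_n(M)\le R(K)^n\kappa_n$; but we need $M=rB^n$.

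To force $M=rB^n$, I would pass to the volumes: $V_n(K_m)$ lies between $V_n(L_m)\to r^n\kappa_n$ and $V_n(O_m)$, so it suffices to show $V_n(O_m)\to r^n\kappa_n$. For this I would use Theorem~\ref{2}'s hypothesis that $(H_m)$ is Schwarz-universal together with the fact that $O_H$ satisfies the hypotheses needed to apply an argument analogous to Lemma~\ref{44}: $O_H$ is monotonic, invariant on $o$-symmetric balls, and by Lemma~\ref{1} does not decrease $V_n$ (since $S_HK\subset\overline{M}_HK\subset O_HK$ and $S_H$ preserves $V_n$), so the layering functional satisfies $\Omega(L_m)\le\Omega(K_m)\le\Omega(O_m)$ and all are squeezed by the convergence $L_m\to rB^n$; alternatively, since $O_H$ is continuous on ${\mathcal{K}}^n$ (noted in Section~\ref{symm}) and monotonic, and since $L_m\to rB^n$ which is rotationally symmetric with respect to every subspace, one checks directly that $O_{H_{m+1}}$ applied to bodies close to a ball stays close to a ball. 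Concretely: given $\varepsilon>0$, pick $m_0$ with $L_{m_0}\supset (r-\varepsilon)B^n$; then $K_m\supset L_m\supset L_{m_0}\supset(r-\varepsilon)B^n$ for $m\ge m_0$ (using monotonicity of the subsequent symmetrizations and invariance on $o$-symmetric balls to propagate the inner ball forward), and simultaneously $K_m\subset O_m\subset R(K)B^n$, so all subsequential Hausdorff limits $M$ of $(K_m)$ satisfy $(r-\varepsilon)B^n\subset M$; letting $\varepsilon\to0$ gives $rB^n\subset M$, while $V_n(M)=\lim V_n(K_m)$ is squeezed to $r^n\kappa_n$ by $V_n(L_m)\le V_n(K_m)\le V_n(O_m)$ once $V_n(O_m)\to r^n\kappa_n$ is known, whence $M=rB^n$. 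Since every subsequential limit equals $rB^n$ and $(K_m)$ is bounded, $K_m\to rB^n$, and $r$ is independent of $l$, so $(H_m)$ is weakly $\di$-universal.

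The main obstacle is showing $V_n(O_m)\to r^n\kappa_n$ (equivalently, that the outer symmetrals do not ``leak'' extra volume in the limit): one must argue that since $O_m\supset L_m$ and $L_m\to rB^n$, while $O_m$ is rotationally symmetric with respect to each $U_j$ appearing infinitely often and is sandwiched inside $R(K)B^n$, the limit can only be $rB^n$. I expect the cleanest route is the layering-function monotonicity of Lemma~\ref{44}: $O_H$ is monotonic, invariant on $o$-symmetric balls, and does not decrease $V_n$ by Lemma~\ref{1}, so $\Omega(K)\le\Omega(L_m)\le\Omega(O_m)$, and since both $\Omega(L_m)$ and the limiting layering value along a convergent subsequence of $(O_m)$ equal $\Omega(rB^n)$, equality propagates and pins down $V_n$; the remaining bookkeeping (independence of $l$, reduction to the full sequence via boundedness) is routine as in the proof of Proposition~\ref{Successive}.
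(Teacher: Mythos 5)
There is a genuine gap, and it is exactly at the point you flag as ``the main obstacle'': your argument needs $V_n(O_m)\to r^n\kappa_n=V_n(K)$ (equivalently $V_n(K_m)\to V_n(K)$), and this is false. The hypothesis $S_HK\subset\di_HK$ only forces $V_n(\di_HK)\ge V_n(K)$; it does not prevent $\di_H$ from strictly increasing volume at every step. Take $\di_H=O_H$, which satisfies (\ref{SdiM2}): then $V_n(K_m)=V_n(O_{1,m}K)\ge V_n(O_{H_1}K)>V_n(K)$ for generic $K$, and since the volumes are nondecreasing the limit ball has volume strictly larger than $V_n(K)$. So the limit of $(K_m)$ is \emph{not} the ball of volume $V_n(K)$, and it may depend on $l$ --- which is precisely why the theorem only asserts weak $\di$-universality. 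Your proposed rescues do not close this: the layering inequality $\Omega(L_m)\le\Omega(K_m)\le\Omega(O_m)$ bounds $\Omega(O_m)$ from \emph{below} by something tending to $\Omega(rB^n)$, which says nothing about forcing $\Omega(O_m)$ down to $\Omega(rB^n)$; and continuity of $O_H$ near balls likewise gives no upper control. The sandwich $L_m\subset K_m\subset O_m$ is sound and yields uniform boundedness and the inner bound $rB^n\subset M$ for any subsequential limit $M$, but the outer sequence $(O_m)$ simply does not converge to $rB^n$.

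The paper's proof sidesteps this by not fixing the target radius in advance. One shows $V_n(\di_{1,m}K)$ is nondecreasing (from $S_H\subset\di_H$ and volume preservation of $S_H$) and bounded, so it converges to some $a\ge V_n(K)$; by Blaschke selection a subsequence $\di_{1,m_p}K\to J$ with $V_n(J)=a$. The key step you are missing is to apply Schwarz-universality \emph{starting from the symmetral} $\di_{1,m_p}K$ rather than from $K$: for $s\ge p$ one has $\di_{1,m_s}K\supset S_{m_p+1,m_s}(\di_{1,m_p}K)$, and as $s\to\infty$ the right side tends to an origin-centered ball of volume $V_n(\di_{1,m_p}K)$. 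Letting $p\to\infty$, $J$ contains origin-centered balls of volume tending to $a=V_n(J)$, which forces $J$ to be that ball; all subsequential limits then coincide and the full sequence converges. Running the same argument for each starting index $l$ gives weak $\di$-universality (with a ball that may legitimately depend on $l$).
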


\begin{proof}  Let $(H_m)$ be Schwarz-universal and let $K\in {\mathcal{K}}^n_n$.  Using the right-hand containment in (\ref{SdiM2}), is easy to see that any ball with center at the origin that contains $K$ will also contain all the successive $\di$-symmetrals $\di_{1,m}K$. If $m\in \N$ and $L\in {\mathcal{K}}^n_n$, then by (\ref{SdiM2}), we have $\di_mL\supset S_mL$ and hence $V_n(\di_mL)\ge V_n(S_mL)=V_n(L)$, where $S$ stands for Schwarz symmetrization.  Taking $L=\di_{1,m-1}K$, we obtain $V_n(\di_{1,m}K)\ge V_n(\di_{1,m-1}K)$ for all $m=2,3,\dots$.  Therefore $V_n(\di_{1,m}K)\to a>0$, say, as $m\to \infty$.

By Blaschke's selection theorem, there is a subsequence $(H_{m_p})$ of $(H_m)$ such that $\di_{1,m_p}K\to J\in{\mathcal{K}}^n_n$ as $p\to\infty$, where $V_n(J)=a$.

Now if $1\le p \le s$, then by (\ref{SdiM2}),
\begin{equation}\label{Jin2}
\di_{1,m_s}K=\di_{m_p+1,m_s}(\di_{1,m_p}K)\supset S_{m_p+1,m_s}(\di_{1,m_p}K).
\end{equation}
As $s\to\infty$, the body on the left converges to $J$, while because $(H_m)$ is Schwarz-universal, the body on the right converges to the ball $B_{p,K}$ with center at the origin such that $V_n(B_{p,K})=V_n(\di_{1,m_p}K)$.  However, the latter equation implies that $V_n(B_{p,K})\to a$ as $p\to\infty$.  Now $V_n$ is strictly monotonic on ${\mathcal{K}}^n_n$, $J\supset B_{p,K}$ by (\ref{Jin2}), and $V_n(J)=a$.  These facts force $J$ to be the ball $B_1$ centered at the origin with $V_n(B_1)=a$.  Consequently, any convergent subsequence of $(\di_{1,m}K)$ converges to $B_1$ and hence $\di_{1,m}K\to B_1$ as $m\to \infty$.

Finally, if $l\in \N$, $l\ge 2$, we can apply the above argument to the Schwarz-universal sequence $(H_{m+l-1})$, $m\in \N$, to conclude that $\di_{l,m}K$ converges to a ball $B_l$ as $m\to \infty$. This proves that $(H_m)$ is weakly $\di$-universal.
\end{proof}

Note that by \cite[Theorem~7.5]{BGG}, the right-hand inclusion in (\ref{SdiM2}) is satisfied if $\di_H$ is strictly monotonic, idempotent, invariant on $H$-symmetric cylinders, and invariant under translations orthogonal to $H$ of $H$-symmetric sets, and examples given after that result show that none of these four conditions can be dropped.  Also, the symmetrization from \cite[Example~10.7]{BGG} has all four properties but fails the left-hand inclusion in (\ref{SdiM2}).  The latter symmetrization is not rotationally symmetric, so one might hope that (\ref{SdiM2}) would hold if this fifth condition is added to the four others.  If this were true, we would have a corollary to Theorem~\ref{2} analogous to Proposition~\ref{Successivecor}.  (In this connection, note that $S_HK\subset I_HK$ is not true in general, yet $I_H$ has all five properties except that it is monotonic but not strictly monotonic.)  However, it is false.  To see this, in Example~\ref{ex22}, let $f(K)$ be the reciprocal of the first eigenvalue $\lambda_1(K)$ of the Laplace operator.  See \cite[Section~2.1]{Col05} for the definition of $\lambda_1(K)$ and the fact that it is homogeneous of degree $-2$ and satisfies a Brunn-Minkowski inequality with exponent $-1/2$, a result due to Brascamp and Lieb \cite{BL76}.  Since $f(K)=\lambda_1(K)^{-1}$, $f$ is homogeneous of degree $2$ and satisfies (\ref{BrunnM}) with $k=1/2$.  Moreover, $\lambda_1(K)$, and therefore $f(K)$, is rigid-motion invariant, and $f(K)$ is strictly increasing on ${\mathcal{K}}^n_n$ since $\lambda_1(K)$ is strictly decreasing; see \cite[p.~13]{Hen06}.  Finally, the Faber-Krahn inequality (see, for example, \cite[Theorem~3.2.1]{Hen06}) can be expressed in the form
$$
\left(\frac{f(K)}{f(B^n)}\right)^{1/2}=
\left(\frac{\lambda_1(K)}{\lambda_1(B^n)}\right)^{-1/2}\le \left(\frac{V_n(K)}{\kappa_n}\right)^{1/n},
$$
with equality if and only if $K$ is a ball.  From this (applied with $n$ replaced by $n-i$) it is easy to check that if $\di K$ is the symmetral from Example~\ref{ex22} with $f(K)=\lambda_1(K)^{-1}$, then $\di K\subset S_HK$, where the containment is strict in general and hence the left-hand inclusion in (\ref{SdiM2}) is false.  An example with similar properties can be obtained by instead taking $f(K)=\tau(K)$, the torsional rigidity of $K$; see \cite[Section~2.3]{Col05}.

\section{Symmetrals of compact sets}\label{compact}

In this section, we consider symmetrals of compact sets, paying special attention to Steiner, Schwarz, and Minkowski symmetrization.  The definitions of $S_HK$ and $M_HK$ given for $K\in {\mathcal{K}}^n$ in Section~\ref{symm} apply equally to $K\in {\mathcal{C}}^n$.

It is known (see \cite[Examples~2.1 and~2.4]{BBGV}) that in general the limit of a sequence of successive Steiner symmetrals of a compact convex set may not exist.  Also, the limit of a sequence of successive Steiner symmetrals of a compact set may exist but be non-convex; this is shown by \cite[Example~2.1]{BBGV} with the sequence ($\alpha_m$) of reals chosen so that their sum converges.

In \cite[Theorem~6.1]{BBGV}, it is proved that Klain's theorem \cite[Theorem~5.1]{Kla12} holds for compact sets, i.e., the same statement holds when the initial set is an arbitrary compact set.  Vol\v{c}i\v{c} \cite{V} showed that any sequence $(u_m)$ dense in $S^{n-1}$ can be ordered so that the resulting sequence is Steiner-universal, even if the initial set is an arbitrary compact set.

Despite this progress, many problems concerning symmetrals of compact sets remain open, even for Steiner symmetrization; see, for example, \cite[p.~1708]{BBGV}, \cite[p.~1691]{V}, and Section~\ref{problems}.

Our first contribution is the following extension of the first part of Theorem~\ref{thm5cor}.  (The second part would require a suitable extension of the definition of Minkowski-Blaschke symmetrization, which we shall not pursue here.)

\begin{thm}\label{july24thm}
Klain's theorem \cite[Theorem~5.1]{Kla12} (cf.~Theorem~\ref{4}) holds for Schwarz symmetrization of compact sets, where the limit set $L$ is rotationally symmetric with respect to each $U_j\in {\mathcal{F}}$ that appears infinitely often in $(H_m)$.
\end{thm}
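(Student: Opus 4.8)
The plan is to follow the proof of the first part of Theorem~\ref{thm5cor}, which in turn follows that of Theorem~\ref{4}, while importing the additional devices used in \cite[Section~6]{BBGV} to extend Klain's theorem for Steiner symmetrization from convex bodies to compact sets. As in Theorem~\ref{thm5cor}, throughout the argument ``$U$-symmetric'' is replaced by ``rotationally symmetric with respect to $U$'', which is legitimate because every Schwarz symmetral is rotationally symmetric with respect to its symmetrizing subspace.

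The routine part transfers essentially verbatim. First I would note that if $K\subseteq RB^n$, then each successive symmetral $K_m=S_{H_m}\cdots S_{H_1}K$ again lies in $RB^n$: each $(n-i)$-dimensional slice of $S_HK$ orthogonal to $H$ is a ball centred on $H$ of the same $(n-i)$-volume as the corresponding slice of $K$, hence lies in the corresponding slice of $RB^n$. So Blaschke's selection theorem is available. As in Klain's argument one drops a finite initial segment to assume every $U\in{\mathcal F}$ occurs infinitely often. Schwarz symmetrization on ${\mathcal C}^n$ is monotonic, fixes $o$-symmetric balls, and preserves $V_n$, so by Lemma~\ref{44} (with ${\mathcal B}={\mathcal C}^n$, $f=V_n$) the layering values $\Omega(K_m)$ are non-decreasing; and if $L$ is rotationally symmetric with respect to $H$, then $S_H(K\cap L)\subseteq(S_HK)\cap(S_HL)=(S_HK)\cap L$, so $V_n(K\cap L)=V_n\bigl(S_H(K\cap L)\bigr)\le V_n\bigl((S_HK)\cap L\bigr)$. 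Passing to a subsequence along which the subspaces appear in a fixed cyclic order, relabelled $U_1,\dots,U_k$, and then to a further subsequence with $K_{m_p}\to L$ in the Hausdorff metric, one wants to prove by induction that $L$ is rotationally symmetric with respect to each $U_j$; the case $j=1$ is immediate, since $K_{m_p}$ is rotationally symmetric with respect to $U_1$ and this property passes to Hausdorff limits. Granting the inductive step, the comparison $V_n(K_{m_p}\cap L)\le V_n(K_m\cap L)$ for $m\ge m_p$ together with uniform boundedness forces the whole sequence $(K_m)$ to converge to $L$, and the case $\dim K<n$ is reduced to a lower-dimensional copy of the statement just as in the last paragraph of the proof of Theorem~\ref{4}.

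The inductive step is the main obstacle, because the two continuity inputs of the convex-body proof fail on ${\mathcal C}^n$: neither $\Omega$ nor $S_H$ is continuous for Hausdorff convergence, and the equality case of Lemma~\ref{44} for the non-strictly-increasing functional $V_n$ on ${\mathcal C}^n$ only gives coincidence up to a $V_n$-null set. Some facts do survive and should be the starting point, following \cite{BBGV}: since $V_n(K_m)\equiv a:=V_n(K)$ and $K_{m_p}\subseteq L+\varepsilon B^n$ eventually, one obtains $V_n(K_{m_p}\cap L)\to a$, hence (since $H_m\in\{U_1,\dots,U_{j-1}\}$ for $m_p<m<m'_p$, where $m'_p$ is the first index past $m_p$ with $H_{m'_p}=U_j$, and using the inductive hypothesis and the intersection inequality above) $V_n(K_{m'_p-1}\cap L)\to a$; and the inclusion $K_{m_p}\cap rB^n\subseteq\bigl(L\cap(r+\varepsilon)B^n\bigr)+\varepsilon B^n$ together with continuity of $V_n$ from above shows that $\Omega$ is upper semicontinuous on compact subsets of $RB^n$, so that $\Omega_\infty:=\lim_m\Omega(K_m)\le\Omega(L)$. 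What must be added is a matching \emph{lower} bound on the layering of the limit — essentially, a verification, using the structure of the symmetral sequence as in \cite[Section~6]{BBGV}, that no layering volume is lost in the limit, yielding $\Omega(S_{U_j}L)\le\Omega_\infty$; combined with $\Omega(L)\le\Omega(S_{U_j}L)$ and $\Omega_\infty\le\Omega(L)$ this forces equality throughout, and then a compact-set version of the equality case of Lemma~\ref{44b} gives $S_{U_j}L=L$ (a priori only up to a $V_n$-null set, which is upgraded to genuine equality using that $L$ is a Hausdorff limit of rotationally symmetric sets and that $S_{U_j}L$ is closed). I expect essentially all the work to lie in this semicontinuity analysis of the symmetral sequence; the alternative of writing each $S_{U_j}$ as a Hausdorff limit of Steiner symmetrizations (as in the proof of Lemma~\ref{44b}, now with a sequence that is Steiner-universal for compact sets) and invoking \cite[Theorem~6.1]{BBGV} is tempting but founders on the fact that the resulting symmetrizing hyperplanes no longer come from a finite set, so Klain's theorem for compact sets does not apply directly.
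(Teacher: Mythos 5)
There is a genuine gap, and you have in fact located it yourself: the inductive step. Your skeleton is the convex-body proof (Theorem~\ref{4}/Theorem~\ref{thm5cor}), built on Hausdorff convergence, the layering function $\Omega$, and the equality case of Lemma~\ref{44b}; you correctly observe that every one of these inputs breaks on ${\mathcal C}^n$ ($\Omega$ and $S_H$ are not Hausdorff-continuous, and equality of $V_n$ only identifies sets up to null sets), and then you leave the repair — the ``matching lower bound $\Omega(S_{U_j}L)\le\Omega_\infty$'' and the ``compact-set version of the equality case of Lemma~\ref{44b}'' — as unproven items. These are precisely the crux, not addenda. The same problem infects your ``routine'' endgame: for compact sets, $V_n(K_m\triangle L)\to 0$ plus uniform boundedness does \emph{not} force Hausdorff convergence $K_m\to L$ (that implication uses convexity), so even the final paragraph does not transfer verbatim as claimed.

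The paper avoids this entire circle of difficulties by taking as its skeleton not the convex-body proof but the compact-set proof of \cite[Theorem~6.1]{BBGV}, whose machinery was designed for exactly these obstructions: it works with the fattened sets $K_\delta=K+\delta B^n$ and the quantities ${\mathcal H}^n(K_\delta\setminus rB^n)$ rather than with $\Omega$ and Hausdorff limits of the raw sets. All that then has to be supplied are the Schwarz analogs of the three inputs of that proof: the contraction ${\mathcal H}^n(S_HK\setminus S_HL)\le{\mathcal H}^n(K\setminus L)$, the inclusion $(S_HK)_\delta\subset S_HK_\delta$, and — the one genuinely new ingredient — the rigidity statement that ${\mathcal H}^n(S_HK_\delta\setminus rB^n)={\mathcal H}^n(K_\delta\setminus rB^n)$ for all $\delta,r>0$ forces $S_HK=K$. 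The latter is proved by a reduction to the Steiner case quite different from the approximation you (rightly) reject: for $u\in S^{n-1}\cap H^\perp$ one sandwiches ${\mathcal H}^n(S_{u^\perp}K_\delta\setminus rB^n)$ between ${\mathcal H}^n(K_\delta\setminus rB^n)$ and ${\mathcal H}^n(S_HK_\delta\setminus rB^n)$ using $S_HS_{u^\perp}K_\delta=S_HK_\delta$, so that the hypothesis forces equality and \cite[Lemma~3.3]{BBGV} yields $S_{u^\perp}K=K$ for every such $u$, hence $S_HK=K$. Without this lemma (or a proof of your proposed semicontinuity claims), your argument does not close.
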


\begin{proof}
We shall only give a sketch, indicating the necessary observations that allow the proof for Steiner symmetrization from \cite[Theorem~6.1]{BBGV} to be modified.

Let $i\in \{1,\dots,n-2\}$ and let $H\in {\mathcal{G}}(n,i)$. Let $K, L\in {\mathcal{C}}^n$ be nonempty and let $E_{\delta}=E+\delta B^n$ for $E\in {\mathcal{C}}^n$ and $\delta>0$.  The main argument requires the following four preliminary observations.  Firstly, the monotonicity of $S_H$ implies that $S_H(K\cap L)\subset S_HK\cap S_HL$.  This, the equality ${\mathcal{H}}^n(S_HK\setminus S_HL)={\mathcal{H}}^n(S_HK)-{\mathcal{H}}^n(S_HK\cap S_HL)$, and the fact that $S_H$ preserves volume yield
\begin{equation}\label{july241}
{\mathcal{H}}^n(S_HK\setminus S_HL)\le {\mathcal{H}}^n(K\setminus L).
\end{equation}
Secondly, it follows that ${\mathcal{H}}^n(S_HK\triangle S_HL)\le {\mathcal{H}}^n(K\triangle L)$ and hence that $S_H$ is continuous on ${\mathcal{C}}^n$ in the symmetric difference metric.  Thirdly, the inclusion
\begin{equation}\label{july242}
(S_HK)_{\delta}\subset S_H K_{\delta}
\end{equation}
is the special case $L=\delta B^n$ of $S_HK+S_HL\subset S_H(K+L)$, which in turn follows from the Brunn-Minkowski inequality applied to intersections of $K$ and $L$ with translates of $H^{\perp}$.  Fourthly,
\begin{equation}\label{july243}
{\mathcal{H}}^n(S_HK_{\delta}\setminus rB^n)= {\mathcal{H}}^n(K_{\delta}\setminus rB^n)~{\text{for all}}~\delta, r>0\Rightarrow S_HK=K.
\end{equation}
To see this, let $u\in S^{n-1}\cap H^{\perp}$.  Then
$${\mathcal{H}}^n(K_{\delta}\setminus rB^n)\ge {\mathcal{H}}^n(S_{u^{\perp}}K_{\delta}\setminus rB^n)\ge {\mathcal{H}}^n(S_HS_{u^{\perp}}K_{\delta}\setminus rB^n) =
{\mathcal{H}}^n(S_HK_{\delta}\setminus rB^n),$$
where the inequalities follow from (\ref{july241}) and the equality from $S_HS_{u^{\perp}}K_{\delta}=S_HK_{\delta}$.  The hypothesis in (\ref{july243}) implies that equality holds throughout. Then, from ${\mathcal{H}}^n(K_{\delta}\setminus rB^n)= {\mathcal{H}}^n(S_{u^{\perp}}K_{\delta}\setminus rB^n)$ and \cite[Lemma~3.3]{BBGV}, we get $S_{u^{\perp}}K=K$ and since $u\in S^{n-1}\cap H^{\perp}$ was arbitrary, the desired conclusion $S_HK=K$ in (\ref{july243}) follows.

With these preliminary observations in hand, a few substitutions allow the main argument of \cite[Theorem~6.1]{BBGV} to be followed without difficulty.  Of course Steiner symmetrization with respect to a sequence of directions must be replaced by Schwarz symmetrization with respect to a sequence of subspaces. Otherwise, it is only necessary to appeal to (\ref{july241}), (\ref{july242}), and (\ref{july243}) wherever the proof of \cite[Theorem~6.1]{BBGV} uses \cite[(2.1)]{BBGV}, \cite[(2.2)]{BBGV}, and \cite[Lemma~3.3]{BBGV}, respectively.
\end{proof}

For the remaining results in this section, we need some definitions. The {\em Kuratowski limit superior} of a sequence $(A_m)$ of sets in $\R^n$ is the closed set
$$\Ls_{m\to\infty} A_m=\{x\in\R^n: \forall \ee>0,~B(x,\ee)\cap A_m\neq\emptyset~{\text{for infinitely many }}m\}$$
and the {\em Kuratowski limit inferior} of $(A_m)$ is the closed set
$$\Li_{m\to\infty} A_m=\{x\in\R^n: \forall \ee>0,~B(x,\ee)\cap A_m\neq\emptyset~{\text{for sufficiently large }}m\}.$$
It is not hard to check that $\Li_{m\to\infty} A_m\subset \Ls_{m\to\infty} A_m$.  Also, if $(A_m)$ is uniformly bounded, then $\lim_{m\to\infty} A_m$ exists if and only if $\Li_{m\to\infty} A_m= \Ls_{m\to\infty} A_m$.  See \cite[Section~29]{K}, where these notions are credited to P.~Painlev\'{e}.

\begin{lem}\label{lemdec12}
Let $i\in \{1,\dots,n-1\}$, let $\di$ be an $i$-symmetrization process on ${\mathcal{C}}^n$ that preserves sets in ${\mathcal{K}}^n_n$, and let $(H_m)$ be a $\di$-universal sequence in ${\mathcal{G}}(n,i)$.   Let $K\in {\mathcal{C}}^n$ and suppose that the sequence $(\di_{1,m}K)$, defined via \eqref{dec121}, is uniformly bounded.  Let $r>0$ be minimal such that $\Ls_{m\to\infty}\di_{1,m}K\subset rB^n$. Let $f:{\mathcal{K}}^n_n\to [0,\infty)$ be continuous and strictly increasing. If $\di_H$ is monotonic on ${\mathcal{C}}^n$ and $f$-preserving on ${\mathcal{K}}^n_n$ for each $H\in {\mathcal{G}}(n,i)$, then $rS^{n-1}\subset \Li_{m\to\infty}\di_{1,m}K$.
\end{lem}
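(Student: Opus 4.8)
The plan is to exploit the $f$-preserving property on convex bodies to "trap" the successive symmetrals of $K$ between two families of convex bodies whose $f$-values coincide in the limit. First I would fix $K\in{\mathcal{C}}^n$ and set $C_m=\conv(\di_{1,m}K)$; since $\di_H$ is monotonic on ${\mathcal{C}}^n$ and preserves convex bodies, and since $\conv$ commutes with nothing in general, one must be a little careful here — but the point is that $K\subset \conv K$, so $\di_{1,m}K\subset \di_{1,m}(\conv K)$ by repeated monotonicity. Because $(H_m)$ is $\di$-universal and $\conv K\in{\mathcal{K}}^n_n$ (assuming $\dim K=n$; the lower-dimensional case is handled separately, as $f$ would be replaced by a lower-dimensional volume or the conclusion is trivial), the bodies $\di_{1,m}(\conv K)$ converge to a ball $\rho B^n$ with $f(\rho B^n)=f(\conv K)$, and $\rho\geq r$ since $\Ls_{m\to\infty}\di_{1,m}K\subset \Ls_{m\to\infty}\di_{1,m}(\conv K)=\rho B^n$. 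This gives an outer bound but not yet the sharp one.

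For the lower bound, the key is to approach $r$ from inside. I would argue that for any $\ee>0$, for all large $m$ the body $\di_{1,m}K$ contains a ball $B^n\cap H_m^{\perp}$-like slab forcing its symmetrized volume (or $f$-value, once convexified) to stay bounded below. More precisely: by minimality of $r$, there are infinitely many $m$ and points $x_m\in\di_{1,m}K$ with $\|x_m\|>r-\ee$. Apply one more symmetrization $\di_{H_{m+1}}$: monotonicity on ${\mathcal{C}}^n$ gives $\{x_m\}$ mapped into $\di_{1,m+1}K$ in a way that, combined with the known structure of the symmetrizations with Property~5 (invariance on $H$-symmetric spherical cylinders) — which Schwarz and the relevant processes possess — produces a spherical cap or annular region in $\di_{1,m+1}K$ at radius close to $r-\ee$. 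Taking convex hulls and using that $f$ is $f$-preserving on ${\mathcal{K}}^n_n$ and continuous, one deduces $f(\conv \di_{1,m}K)\geq f$ of a body of inradius bounded away from $0$ and circumradius close to $r$. Since the whole sequence $f(\conv\di_{1,m}K)$ is monotone (each step does not decrease $f$ of the convex hull, because $\di_H$ does not decrease $f$ and convexification only increases it) and bounded above by $f(\rho B^n)$, it converges; and the limit must be exactly $f(rB^n)$.

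Once $f(\conv\di_{1,m}K)\to f(rB^n)$ is established, together with $\di_{1,m}K\subset \conv\di_{1,m}K\to rB^n$ (from the outer bound, forcing $\rho=r$), one gets that $\conv\di_{1,m}K\to rB^n$ in the Hausdorff metric, using that $f$ is strictly increasing and continuous so equality of $f$-limits with the circumradius limit pins the shape. The final step is to transfer convergence of the convex hulls to the conclusion about $\Li_{m\to\infty}\di_{1,m}K$: given $p\in\partial(rB^n)$ and $\ee>0$, since $\conv\di_{1,m}K$ is within $\ee/2$ of $rB^n$ for large $m$, there is a point of $\conv\di_{1,m}K$ within $\ee/2$ of $p$; this point is a convex combination of points of $\di_{1,m}K$, but by the rotational symmetry/spherical-cap structure (Property~8 or the explicit form of Schwarz symmetrals) the actual set $\di_{1,m}K$ must itself meet $B(p,\ee)$ — here one uses that a convex body of circumradius $\approx r$ whose boundary on $\partial(rB^n)$ would otherwise be "missing" near $p$ contradicts the symmetry imposed at the last symmetrization step. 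Hence $p\in \Li_{m\to\infty}\di_{1,m}K$, giving $\partial(rB^n)\subset\Li_{m\to\infty}\di_{1,m}K$.

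The main obstacle I anticipate is the passage from "$\conv\di_{1,m}K$ is close to $rB^n$" to "$\di_{1,m}K$ itself comes close to every boundary point of $rB^n$": a compact set can have a convex hull much larger than itself, so this step genuinely requires using the structure of the symmetrization process (monotonicity plus the spherical/cylindrical invariance properties) rather than soft arguments, and getting the bookkeeping right on which symmetrization step produces the required annular region near radius $r$ is the delicate part of the proof.
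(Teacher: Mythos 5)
There is a genuine gap, and in fact the route you sketch cannot work under the stated hypotheses. The lemma assumes only that $\di$ preserves ${\mathcal{K}}^n_n$, is monotonic on ${\mathcal{C}}^n$, and is $f$-preserving on ${\mathcal{K}}^n_n$; it is later applied to Minkowski symmetrization, which is \emph{not} rotationally symmetric for $i<n-1$ and has no ``spherical-cap structure.'' Your argument leans twice on Property~5, Property~8, and ``the explicit form of Schwarz symmetrals'' --- once to claim that a point $x_m\in\di_{1,m}K$ with $\|x_m\|>r-\ee$ forces an annular region in $\di_{1,m+1}K$, and again in the final transfer from convergence of $\conv\di_{1,m}K$ to the statement about $\Li_{m\to\infty}\di_{1,m}K$. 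Neither appeal is available. Moreover, even for Schwarz symmetrization the first of these steps is false: a single point contributes nothing to the $(n-i)$-dimensional measure of a section, so one further symmetrization can collapse it to the axis (or to nothing); there is no lower bound on $f(\conv\di_{1,m}K)$ to be extracted this way. You correctly identify the convex-hull-to-$\Li$ passage as the delicate point, but the proposal does not actually bridge it.

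The paper's proof avoids convex hulls of the symmetrals entirely and argues by contradiction from the \emph{minimality} of $r$. If some $x\in\partial(rB^n)$ is missed, i.e.\ $B(x,\ee_0)\cap\di_{1,m_k}K=\emptyset$ along a subsequence, one first shows (an easy compactness argument) that $\di_{1,m}K$ eventually lies in any $\ee$-neighborhood of $\Ls_{m\to\infty}\di_{1,m}K\subset rB^n$. Combining these two facts, the symmetrals $\di_{1,m_k}K$ are trapped inside the \emph{convex body} $C+tB^n$, where $C=(rB^n)\cap J$ for a half-space $J$ cutting off a cap around $x$, and $t$ is chosen small enough that $f(C+tB^n)<f(rB^n)$ (here continuity and strict monotonicity of $f$ are used). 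Now monotonicity of $\di_H$ on ${\mathcal{C}}^n$ plus $\di$-universality applied to the convex body $C+tB^n$ give $\di_{1,p}K\subset\di_{m_k+1,p}(C+tB^n)\to sB^n$, and $f$-preservation on ${\mathcal{K}}^n_n$ forces $f(sB^n)=f(C+tB^n)<f(rB^n)$, hence $s<r$. This puts $\Ls_{m\to\infty}\di_{1,m}K$ inside a ball of radius strictly less than $r$, contradicting minimality. I would encourage you to rework your argument along these lines: the only convex bodies you should feed to the universality hypothesis are explicitly constructed supersets of the symmetrals, not convex hulls of them.
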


\begin{proof}
Let $K\in {\mathcal{C}}^n$ and $r>0$ satisfy the assumptions of the lemma and let $\ee>0$.  Let $L=\Ls_{m\to\infty}\di_{1,m}K$ and let $I=\Li_{m\to\infty}\di_{1,m}K$.  We claim that there is an $m_0\in \N$ such that
\begin{equation}\label{jan61}
\di_{1,m}K\subset L+\ee B^n
\end{equation}
for $m\ge m_0$.  Indeed, if this is not true, then there exists a subsequence $(m_j)$ and points
$$x_{m_j}\in (\di_{1,m_j}K)\setminus(L+\ee B^n)$$
for $j\in \N$.  Since $(\di_{1,m}K)$ is uniformly bounded, there is a subsequence of $(x_{m_j})$ converging to some $z$.  But then $z\in L$ and $z\not\in L+\ee B^n$, which is impossible.

If the conclusion of the lemma is false, there is an $x\in (rS^{n-1})\setminus I$ and hence an $\ee_0>0$ and subsequence $(m_k)$ such that
\begin{equation}\label{jan62}
B(x,\ee_0)\cap \di_{1,m_k}K=\emptyset
\end{equation}
for $k\in \N$.  Let $J$ be a closed half-space such that $x\not\in J$ and
\begin{equation}\label{jan63}
(rB^n)\setminus B(x,\ee_0)\subset J.
\end{equation}
Let $C=(rB^n)\cap J$. As $f$ is continuous and strictly increasing, we can choose $t>0$ small enough so that $x\not\in C+tB^n$ and
\begin{equation}\label{jan64}
f(C+tB^n)< f(rB^n).
\end{equation}
By (\ref{jan63}), $(rB^n)\setminus B(x,\ee_0)\subset C\setminus B(x,\ee_0)$ and hence
\begin{equation}\label{jan65}
((r+t')B^n)\setminus B(x,\ee_0)\subset (C+tB^n)\setminus B(x,\ee_0)
\end{equation}
for sufficiently small $t'>0$.  It then follows from (\ref{jan61}) with $\ee=t'$, the assumption $L\subset rB^n$, (\ref{jan62}), and (\ref{jan65}) that
\begin{equation}\label{jan66}
\di_{1,m_k}K\subset C+tB^n
\end{equation}
for $m_k\ge m_0$.  Now if $p>m_k\ge m_0$, (\ref{jan66}) and the monotonicity of $\di_H$ on ${\mathcal{C}}^n$ yields
\begin{equation}\label{jan67}
\di_{1,p}K=\di_{m_k+1,p}(\di_{1,m_k}K)\subset \di_{m_k+1,p}(C+tB^n) \to sB^n
\end{equation}
as $p\to\infty$, where $s$ is independent of $k$, since $C+t B^n\in{\mathcal{K}}^n_n$ and $(H_m)$ is $\di$-universal.  As $\di_H$ is $f$-preserving on ${\mathcal{K}}^n_n$ and $f$ is continuous there, we deduce from (\ref{jan64}) and (\ref{jan67}) that
$$f(s B^n)=f(\di_{{m_k+1,p}}(C+t B^n))=f(C+t B^n)<f(rB^n)$$
for $p\ge m_k\ge m_0$, so $s<r$ because $f$ is strictly increasing on ${\mathcal{K}}^n_n$.  This and (\ref{jan67}) imply that $\di_{1,p}K\subset ((r+s)/2)B^n$ for sufficiently large $p$ and consequently $L\subset ((r+s)/2)B^n$.  Since $(r+s)/2<r$, this contradicts the minimality of $r$.
\end{proof}

\begin{thm}\label{thmdec17}
Let $i\in \{1,\dots,n-1\}$ and let $(H_m)$ be a Schwarz-universal sequence in ${\mathcal{G}}(n,i)$.   If $K\in {\mathcal{C}}^n$ and $l\in \N$, the successive Schwarz symmetrals $S_{l,m}K$ of $K$, defined by \eqref{dec121} with $\di=S$, converge to $rB^n$ as $m\to\infty$, for some $r=r(K)$ independent of $l$.  In other words, $(H_m)$ is also Schwarz-universal for compact sets.
\end{thm}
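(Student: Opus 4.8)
The plan is to feed the situation into Lemma~\ref{lemdec12}, taking $\di$ to be Schwarz symmetrization $S$ (interpreted as Steiner symmetrization when $i=n-1$) and $f=V_n$. Since the tail $(H_{m+l-1})_m$ of a Schwarz-universal sequence is again Schwarz-universal, it suffices to treat $l=1$. First I would verify the hypotheses of Lemma~\ref{lemdec12}: Schwarz symmetrization is monotonic on $\mathcal{C}^n$, it carries convex bodies to convex bodies, and it preserves $V_n$, which is continuous and strictly increasing on $\mathcal{K}^n_n$. Moreover the sequence $(S_{1,m}K)$, defined by \eqref{dec121} with $\di=S$, is uniformly bounded: if $K\subseteq RB^n$, then $S_H(RB^n)=RB^n$ for every $H\in\mathcal{G}(n,i)$, so monotonicity gives $S_{1,m}K\subseteq RB^n$ for all $m$. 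Lemma~\ref{lemdec12} then applies and yields, with $r\ge0$ chosen minimal so that $\Ls_{m\to\infty}S_{1,m}K\subseteq rB^n$, the inclusion $\partial(rB^n)\subseteq\Li_{m\to\infty}S_{1,m}K$. If $r=0$ this already says $S_{1,m}K\to\{o\}$, so we may assume $r>0$.

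The crux of the convergence is to promote $\partial(rB^n)\subseteq\Li_{m\to\infty}S_{1,m}K$ to $rB^n\subseteq\Li_{m\to\infty}S_{1,m}K$, and here I would exploit that each $S_{1,m}K=S_{H_m}(S_{1,m-1}K)$ is, by the very definition of Schwarz symmetrization, rotationally symmetric with respect to $H_m$: its intersection with each $(n-i)$-plane orthogonal to $H_m$ is a ball centered on $H_m$ (a segment with midpoint on $H_m$ when $i=n-1$). Consequently, whenever $z\in S_{1,m}K$, the entire segment $[\,z|H_m,z\,]$ lies in $S_{1,m}K$. Given $\ee>0$, compactness of $\partial(rB^n)$ together with $\partial(rB^n)\subseteq\Li$ produces an $m_0$ such that for every $m\ge m_0$ and every $p\in\partial(rB^n)$ there is a $z_p\in S_{1,m}K$ with $\|z_p-p\|<\ee$. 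Since orthogonal projection onto $H_m$ is $1$-Lipschitz, the segment $[\,z_p|H_m,z_p\,]\subseteq S_{1,m}K$ lies within Hausdorff distance $2\ee$ of $[\,p|H_m,p\,]$. An elementary computation gives $\bigcup_{p\in\partial(rB^n)}[\,p|H_m,p\,]=rB^n$, so every point of $rB^n$ lies within $2\ee$ of $S_{1,m}K$ once $m\ge m_0$; that is, $rB^n\subseteq S_{1,m}K+2\ee B^n$ for all large $m$. Letting $\ee\downarrow0$ gives $rB^n\subseteq\Li_{m\to\infty}S_{1,m}K$, whence $\Li_{m\to\infty}S_{1,m}K=\Ls_{m\to\infty}S_{1,m}K=rB^n$, and since the symmetrals are uniformly bounded this means $S_{1,m}K\to rB^n$ in the Hausdorff metric.

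Running the same argument on each tail $(H_{m+l-1})_m$ shows that $S_{l,m}K$ converges to a ball $r_lB^n$ for every $l$; it remains to see $r_l=r_1$, and this I expect to be the main obstacle. For convex bodies the identity $\kappa_n r^n=V_n(K)$ is immediate because Schwarz preserves $V_n$ and $V_n$ is Hausdorff-continuous on $\mathcal{K}^n_n$; for compact sets $V_n$ is only upper semicontinuous under Hausdorff convergence, which gives $V_n(K)=\limsup_m V_n(S_{l,m}K)\le\kappa_n r_l^n$ but not the reverse inequality $\kappa_n r_l^n\le V_n(K)$ needed to conclude $r_l=r_1$. Establishing that reverse inequality amounts to showing that the symmetrals $S_{l,m}K$ fill $r_lB^n$ in Lebesgue measure, not merely in Hausdorff distance. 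The tool should again be the rotational symmetry of $S_{l,m}K$ with respect to $H_m$: the slice-radius function of $S_{l,m}K$ is upper semicontinuous, and a closed dense subset of a ball is the whole ball, so a sufficiently robust form of the density produced by the filling step above ought to convert into a uniform lower bound on those slice radii and hence on $V_n(S_{l,m}K)$. (Alternatively one may route the argument through the layering functional $\Omega$ of \eqref{layer}, which is nondecreasing along $(S_{l,m}K)_m$ by Lemma~\ref{44}, and try to identify its limit.) Once $\kappa_n r_l^n=V_n(K)$ is known, $r_l$ is independent of $l$, which finishes the proof.
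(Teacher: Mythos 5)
Your treatment of the convergence for each fixed $l$ is correct and is, in substance, the paper's own argument. The paper likewise reduces to $l=1$, applies Lemma~\ref{lemdec12} with $f=V_n$ to obtain $\partial(rB^n)\subset \Li_{m\to\infty}S_{1,m}K$ (after disposing of the case $r=0$), and then upgrades this to $\Li_{m\to\infty}S_{1,m}K=rB^n$ using exactly the feature you isolate: $S_{1,m}K$ is a Schwarz symmetral with respect to $H_m$, so with each point $z$ it contains the whole slice-ball centered at $z|H_m$, in particular the segment $[\,z|H_m,z\,]$. The paper runs this step by contradiction (a point $x_0\in rB^n\setminus I$, a boundary point $v_0$ radially above $x_0$ relative to $H_{m_k}$, and a nearby point of $S_{1,m_k}K$ whose slice must invade $B(x_0,\ee)$); your direct covering of $rB^n$ by the segments $[\,p|H_m,p\,]$, $p\in\partial(rB^n)$, is an equivalent and clean packaging of the same idea, and the details (compactness to get a uniform $m_0$, the $1$-Lipschitz projection, the identity $\bigcup_p[\,p|H_m,p\,]=rB^n$, and $\Li=\Ls$ plus uniform boundedness giving Hausdorff convergence) all check out.

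The genuine gap is the independence of $r$ from $l$, which you leave open. The paper disposes of it in its opening sentence: since Schwarz symmetrization preserves $\mathcal{H}^n$ on ${\mathcal{C}}^n$, the limit ball must satisfy $V_n(rB^n)={\mathcal{H}}^n(K)$, which determines $r$ from $K$ alone. You are right that for arbitrary compact sets this is not a formal consequence of Hausdorff convergence --- upper semicontinuity of $\mathcal{H}^n$ yields only ${\mathcal{H}}^n(K)\le\kappa_n r_l^n$ --- and the paper gives no further justification of the reverse inequality; it treats the identity as immediate from volume preservation. So relative to the paper, your proposal is missing only this one (brief, and in the paper essentially asserted) step, but neither of the routes you sketch for closing it --- a uniform lower bound on slice radii, or identifying the limit of the layering functional $\Omega$ --- is actually carried out, so as written your argument does not establish the final clause ``$r$ independent of $l$.'' To finish, you must either justify $\kappa_n r_l^n\le {\mathcal{H}}^n(K)$ along this particular sequence of symmetrals (i.e., prove the lower semicontinuity of volume that fails for general Hausdorff-convergent compact sets) or accept the paper's volume-preservation identity as stated.
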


\begin{proof}
It is enough to consider the case when $l=1$, because the fact that Schwarz symmetrization preserves volume on ${\mathcal{C}}^n$ means that the constant $r$ in the statement of the theorem must satisfy $V_n(rB^n)={\mathcal{H}}^n(K)$.  Accordingly, let $L=\Ls_{m\to\infty}S_{1,m}K$ and let $I=\Li_{m\to\infty}S_{1,m}K$.  Let $r\ge 0$ be minimal such that $L\subset rB^n$.  If $r=0$, then $L=\{o\}$ and by (\ref{jan61}), the result holds with $r=0$.  Otherwise, $r>0$ and we may apply Lemma~\ref{lemdec12} with $f=V_n$ to conclude that $rS^{n-1}\subset I$.  It will suffice to prove that $I=rB^n$, since this implies that $I=L$.

Suppose that $I\neq rB^n$.  Then there is an $x_0\in rB^n\setminus I$ and hence an $\ee_0>0$ and a subsequence $(m_k)$ such that
\begin{equation}\label{jan71}
B(x_0,\ee)\cap S_{1,m_k}K=\emptyset
\end{equation}
for $0<\ee\le \ee_0$ and $k\in \N$.  As $I\subset rB^n$ is compact, there is an $0<\ee\le \ee_0$ such that
\begin{equation}\label{dec171}
B(x_0,\ee)\subset rB^n\setminus (I+\ee B^n)\subset (r-\ee)B^n.
\end{equation}
Choose $v_1,\dots,v_p\in rS^{n-1}$ such that
$$rS^{n-1}\subset\cup\{B(v_i,\ee/2): i=1,\dots,p\}.$$
Let $i\in\{1,\dots,p\}$.  Since $v_i\in I$, there is an $n_i$ such that for each $m\ge n_i$, there is an $x_i\in B(v_i,\ee/2)\cap S_{1,m}K$.  Let $n_0=\max\{n_i: i=1,\dots,p\}$ and choose $k\in \N$ with $m_k\ge n_0$.

Let $x_0=y_0+z_0$, where $y_0\in H_{m_k}$ and $z_0\in H_{m_k}^{\perp}$.  Let $v_0=y_0+w_0\in rS^{n-1}$, where $w_0\in H_{m_k}^{\perp}$ and $x_0\in [y_0,v_0]$.  Since $v_0\not\in B(x_0,\ee)$ by (\ref{dec171}), we have $\|x_0-v_0\|> \ee$.  Moreover, $z_0\in [o,w_0]$ and $\|z_0-w_0\|=\|x_0-v_0\|$.  Consequently,
\begin{equation}\label{dec181}
\|w_0\|> \|z_0\|+\ee.
\end{equation}
Choose $i\in \{1,\dots,p\}$ such that $v_0\in B(v_i,\ee/2)$. Since ${m_k}\ge n_0$, there is an $x_i\in B(v_i,\ee/2)\cap S_{1,{m_k}}K$ and hence $\|v_0-x_i\|\le\ee$.  Let $x_i=y_i+w_i$, where $y_i\in H_{m_k}$ and $w_i\in H_{m_k}^{\perp}$.  Then $\|y_0-y_i\|\le\ee$ and $\|w_0-w_i\|\le\ee$, and from the latter and (\ref{dec181}), we conclude that $\|w_i\|> \|z_0\|$.  It follows that $$(B(y_i,\|w_i\|)\cap (H_{m_k}^{\perp}+y_i))\cap B(x_0,\ee)\neq\emptyset.$$
However, $B(y_i,\|w_i\|)\cap (H_{m_k}^{\perp}+y_i)\subset S_{1,{m_k}}K$, because $x_i=y_i+w_i\in S_{1,{m_k}}K$ and $S_{1,{m_k}}K$ is a Schwarz symmetral with respect to $H_{m_k}$.  This contradicts (\ref{jan71}) and completes the proof.
\end{proof}

\begin{thm}\label{thmjan9}
Let $i\in \{1,\dots,n-1\}$ and let $(H_m)$ be a Minkowski-universal sequence in ${\mathcal{G}}(n,i)$.   If $K\in {\mathcal{C}}^n$ and $l\in \N$, the successive Minkowski symmetrals $M_{l,m}K$ of $K$, defined by \eqref{dec121} with $\di=M$, converge to $rB^n$ as $m\to\infty$, for some $r=r(K)$ independent of $l$.  In other words, $(H_m)$ is also Minkowski-universal for compact sets.
\end{thm}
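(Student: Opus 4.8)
The plan is to mimic the proof of Theorem~\ref{thmdec17}, with mean width in place of volume: one applies Lemma~\ref{lemdec12} to $\di=M$ with $f=V_1$, which is continuous and strictly increasing on ${\mathcal K}^n_n$. It suffices to treat $l=1$, since for $l\ge2$ the same argument applies to the Minkowski-universal sequence $(H_{m+l-1})_m$; moreover $M_H$ preserves mean width on ${\mathcal C}^n$ (from $h_{M_HA}(u)=\frac12 h_A(u)+\frac12 h_A(R_Hu)$ one sees that the mean width of $M_HA$ equals the average over $u\in S^{n-1}$ of $h_A(u)+h_A(-u)$, i.e.\ the mean width of $A$), so, with $V_1$ continuous on ${\mathcal C}^n$, the limiting radius $r$ must satisfy $V_1(rB^n)=V_1(K)$ and is thus independent of $l$. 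Now fix $K\in{\mathcal C}^n$. Since $K\subset tB^n$ for some $t>0$ and $M_H$ carries subsets of $tB^n$ into $tB^n$, the sequence $(M_{1,m}K)$ is uniformly bounded; as $M_H$ is monotonic on ${\mathcal C}^n$, is $V_1$-preserving on ${\mathcal K}^n_n$, and maps ${\mathcal K}^n_n$ into itself, Lemma~\ref{lemdec12} applies. Letting $r\ge0$ be minimal with $\Ls_{m\to\infty}M_{1,m}K\subset rB^n$, we obtain $\partial(rB^n)\subset\Li_{m\to\infty}M_{1,m}K$. (If $r=0$ then $\Ls_{m\to\infty}M_{1,m}K=\{o\}$, whence $M_{1,m}K\to\{o\}$ by uniform boundedness and nonemptiness; so assume $r>0$.)

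The crux, and the point at which the argument genuinely diverges from the Schwarz case since Minkowski symmetrals have no counterpart of the ball-section property exploited in Theorem~\ref{thmdec17}, is to improve $\partial(rB^n)\subset\Li_{m\to\infty}M_{1,m}K$ to $rB^n\subset\Li_{m\to\infty}M_{1,m}K$; together with $\Ls_{m\to\infty}M_{1,m}K\subset rB^n$ this yields $M_{1,m}K\to rB^n$. Here I would use a ``midpoint'' observation: if $x_0\in\inte(rB^n)$, then the endpoints $v_1,v_2$ of the chord of $rB^n$ through $x_0$ perpendicular to $[o,x_0]$ (any pair of antipodal points when $x_0=o$) lie on $\partial(rB^n)$ and satisfy $x_0=\frac12 v_1+\frac12 v_2$. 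For every $m$, the reflected point $R_{H_m}v_2$ again lies on $\partial(rB^n)$, so if $a,c\in M_{1,m-1}K$ are close to $v_1$ and to $R_{H_m}v_2$ respectively, then
$$\tfrac12 a+\tfrac12 R_{H_m}c\in\tfrac12 M_{1,m-1}K+\tfrac12 R_{H_m}M_{1,m-1}K=M_{H_m}(M_{1,m-1}K)=M_{1,m}K$$
is close to $\frac12 v_1+\frac12 R_{H_m}(R_{H_m}v_2)=x_0$. The existence of such $a$ and $c$, with a single index $m$ working for every possible value of $H_m$, comes from $\partial(rB^n)\subset\Li_{m\to\infty}M_{1,m}K$ in its uniform form: if a compact set $C$ is contained in $\Li_{m\to\infty}A_m$, then $\sup_{x\in C}d(x,A_m)\to0$, because the maps $x\mapsto d(x,A_m)$ are $1$-Lipschitz and converge pointwise to $0$ on $C$, hence uniformly there by an Arzel\`a--Ascoli argument. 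So, if $x_0\notin\Li_{m\to\infty}M_{1,m}K$, there are $\ee_0>0$ and a subsequence $(m_k)$ with $B(x_0,\ee_0)\cap M_{1,m_k}K=\emptyset$ for all $k$; applying the construction above with $m=m_k$ for $k$ large, so that $M_{1,m_k-1}K$ meets the $(\ee_0/2)$-neighborhood of every point of $\partial(rB^n)$, produces a point of $M_{1,m_k}K$ within $\ee_0$ of $x_0$, a contradiction. Hence $x_0\in\Li_{m\to\infty}M_{1,m}K$, so $rB^n\subset\Li_{m\to\infty}M_{1,m}K$, which finishes the proof.

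I expect the only delicate point to be exactly this uniformity: because $R_{H_{m_k}}v_2$ ranges over $\partial(rB^n)$ as $k$ varies, one cannot use the pointwise definition of $\Li$ directly but must first pass to the statement that $M_{1,m}K$ lies within $\ee$ of every point of $\partial(rB^n)$ once $m$ is large, which is where the Lipschitz/equicontinuity argument enters. The remaining ingredients, namely monotonicity and mean-width preservation of $M_H$, the identity $M_{1,m}K=\frac12 M_{1,m-1}K+\frac12 R_{H_m}M_{1,m-1}K$ from \eqref{april31} and \eqref{dec121}, and the degenerate case $r=0$, are routine.
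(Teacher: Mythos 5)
Your proposal is correct and follows essentially the same route as the paper: reduce to $l=1$ via mean-width preservation, apply Lemma~\ref{lemdec12} with $f=V_1$ to get $\partial(rB^n)\subset\Li_{m\to\infty}M_{1,m}K$, and then fill in the interior by writing each point as the midpoint of two boundary points and using $M_{1,m}K=\frac12 M_{1,m-1}K+\frac12 R_{H_m}M_{1,m-1}K$. The only (immaterial) difference is that you obtain the needed uniformity of $\partial(rB^n)\subset\Li$ via a Lipschitz/equicontinuity argument, whereas the paper uses a finite $\ee/2$-net of $\partial(rB^n)$ and takes a maximum of finitely many indices.
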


\begin{proof}
It is enough to consider the case when $l=1$.   Indeed, since $\conv(A+B)= \conv A + \conv B$ for arbitrary sets $A$ and $B$ in $\R^n$, Minkowski symmetrization preserves the mean width of convex hulls.  Then, for any $l\in\N$, if $M_{l,m}K$ converges to $rB^n$, we have that $M_{l,m}(\conv K)$ also converges to $rB^n$. But $r$ must satisfy $V_1(rB^n) = V_1(\conv K)$, so $r$ is independent of $l$.

Let $L=\Ls_{m\to\infty}M_{1,m}K$ and let $I=\Li_{m\to\infty}M_{1,m}K$.  Let $r\ge 0$ be minimal such that $L\subset rB^n$.  If $r=0$, then $L=\{o\}$ and by (\ref{jan61}), the result holds with $r=0$.  Otherwise, $r>0$ and we may apply Lemma~\ref{lemdec12} with $f=V_1$ to conclude that $rS^{n-1}\subset I$.  It will suffice to prove that $I=rB^n$, since this implies that $I=L$.

Suppose that $I\neq rB^n$.  Then there is an $x\in rB^n\setminus I$ and hence an $\ee>0$ and a subsequence $(m_k)$ such that
\begin{equation}\label{jan91}
B(x,\ee)\cap M_{1,m_k}K=\emptyset
\end{equation}
for $k\in \N$.  Choose $v_1,\dots,v_p\in rS^{n-1}$ such that
$$rS^{n-1}\subset\cup\{B(v_i,\ee/2): i=1,\dots,p\}.$$
Let $i\in\{1,\dots,p\}$.  Since $v_i\in I$, there is an $n_i$ such that for each $m\ge n_i$, there is an $x_i\in B(v_i,\ee/2)\cap M_{1,m}K$.  Let $n_0=\max\{n_i: i=1,\dots,p\}$ and choose $k\in \N$ with $m_k> n_0$.

Let $y, z\in rS^{n-1}$ be such that $x=(y+z)/2$.
Choose $i,j\in \{1,\dots,p\}$ such that $y\in B(v_i,\ee/2)$ and $R_{H_{m_k}}z\in B(v_j,\ee/2)$. Since ${m_k}> n_0$, there are $x_i\in B(v_i,\ee/2)\cap M_{1,{m_k}-1}K$ and $x_j\in B(v_j,\ee/2)\cap M_{1,{m_k}-1}K$.  Then $x_i\in B(y,\ee)$ and $x_j\in B(R_{H_{m_k}}z,\ee)$; the latter implies that $R_{H_{m_k}}x_j\in B(z,\ee)$.

Let $q=(x_i+R_{H_{m_k}}x_j)/2$.  Then $q\in B(x,\ee)$ and $q\in M_{H_{m_k}}(M_{1,{m_k}-1}K)=M_{1,{m_k}}K$
since $x_i\in M_{1,{m_k}-1}K$ and $R_{H_{m_k}}x_j\in R_{H_{m_k}}(M_{1,{m_k}-1}K)$. It follows that $B(x,\ee)\cap M_{1,{m_k}}K\neq\emptyset$, which contradicts (\ref{jan91}) and completes the proof.
\end{proof}

Theorem~\ref{thmdec17} with $i=n-1$ and Theorem~\ref{thmjan9}, together with the results mentioned at the end of Section~\ref{symm}, show that the eight properties (weakly) Steiner-universal, (weakly) Minkowski-universal, (weakly) Steiner-universal for compact sets, and (weakly) Minkowski-universal for compact sets are all equivalent.

\section{Open problems}\label{problems}

\begin{prob}\label{prob0}
Can the assumptions in Theorem~\ref{corthm4} and Proposition~\ref{Successivecor} be weakened?
\end{prob}

We do not know if the assumptions of continuity and invariance under translations orthogonal to $H$ of $H$-symmetric sets are needed for Theorem~\ref{corthm4}, nor whether the latter condition is needed for Proposition~\ref{Successivecor}.

Regarding the continuity property, note that $S_H$ is not continuous on ${\mathcal{K}}^n$, despite having all the properties considered in \cite{BGG} except projection covariance.  Of course $S_H$ is continuous on ${\mathcal{K}}^n_n$, but Example~\ref{44exd} exhibits a $\di_H$ that is monotonic, invariant on $H$-symmetric sets, and invariant under translations of $H$-symmetric sets orthogonal to $H$, but not continuous.  It may be that projection covariance, either alone or in combination with some other properties, implies continuity.  Certainly this is the case when $i=0$, as was proved in \cite[Corollary~8.3]{GHW}.  The latter relied on \cite[Theorem~8.2]{GHW}, while for $i\in\{1,\dots,n-2\}$ we have only the weaker \cite[Proposition~4.4]{BGG} and for $i=n-1$ nothing at all.

\begin{prob}\label{prob7}
Let $i\in \{1,\dots,n-1\}$, let $\di$ be an $i$-symmetrization process on ${\mathcal{C}}^n$ that satisfies the assumptions of Lemma~\ref{lemdec12}, and let $(H_m)$ be a $\di$-universal sequence in ${\mathcal{G}}(n,i)$ for convex bodies. Is $(H_m)$ also $\di$-universal for compact sets?
\end{prob}

\begin{prob}\label{prob10}
Do Theorem~\ref{4} and the second statement in Theorem~\ref{corthm4}, i.e., Klain's theorem for fiber and Minkowski symmetrization, hold if the initial set is an arbitrary compact set?
\end{prob}

As we remarked at the beginning of Section~\ref{compact}, the answer is positive for the case $i=n-1$ of Theorem~\ref{4}, corresponding to Steiner symmetrization, by \cite[Theorem~6.1]{BBGV}.

\begin{prob}\label{probmar28}
Does Klain's theorem hold for Blaschke symmetrization (see \cite[p.~59]{BGG})?
\end{prob}

\bigskip

\end{document}